\newcounter{braid}
\newcounter{strands}
\def\cross{%
  \@ifnextchar^{\message{Got sup}\cross@sup}{\cross@sub}}
\def\cross@sup^#1_#2{\render@cross{#2}{#1}}
\def\cross@sub_#1{\@ifnextchar^{\cross@@sub{#1}}{\render@cross{#1}{1}}}
\def\cross@@sub#1^#2{\render@cross{#1}{#2}}
\def\render@cross#1#2{
  \def\strand{#1}
  \def\crossing{#2}
  \pgfmathsetmacro{\cross@y}{-\value{braid}*\braid@h}
  \pgfmathtruncatemacro{\nextstrand}{#1+1}
  \foreach \thread in {1,...,\value{strands}}
  {
    \pgfmathsetmacro{\strand@x}{\thread * \braid@w}
    \ifnum\thread=\strand
    \pgfmathsetmacro{\over@x}{\strand * \braid@w + .5*(1 - \crossing) * \braid@w}
    \pgfmathsetmacro{\under@x}{\strand * \braid@w + .5*(1 + \crossing) * \braid@w}
    \draw[braid] \pgfkeysvalueof{/tikz/braid start} +(\under@x pt,\cross@y pt) to[out=-90,in=90] +(\over@x pt,\cross@y pt -\braid@h);
    \draw[braid] \pgfkeysvalueof{/tikz/braid start} +(\over@x pt,\cross@y pt) to[out=-90,in=90] +(\under@x pt,\cross@y pt -\braid@h);
    \else
    \ifnum\thread=\nextstrand
    \else
     \draw[braid] \pgfkeysvalueof{/tikz/braid start} ++(\strand@x pt,\cross@y pt) -- ++(0,-\braid@h);
    \fi
   \fi
  }
  \stepcounter{braid}
}
\tikzset{braid/.style={double=\pgfkeysvalueof{/tikz/braid colour},double distance=1pt,line width=2pt,white}}
\newcommand{\braid}[2][]{%
  \begingroup
  \pgfkeys{/tikz/strands=2}
  \tikzset{#1}
  \pgfkeysgetvalue{/tikz/braid width}{\braid@w}
  \pgfkeysgetvalue{/tikz/braid height}{\braid@h}
  \setcounter{braid}{0}
  \let\sigma=\cross
  #2
  \endgroup
}
\newtheorem{theorem}{Theorem}%[section]
\newtheorem{proposition}[theorem]{Proposition}
\newtheorem{lemma}[theorem]{Lemma}
\newtheorem{conjecture}[theorem]{Conjecture}
\newtheorem{corollary}[theorem]{Corollary}
\theoremstyle{definition}
\newtheorem{definition}[theorem]{Definition}
\newtheorem{construction}[theorem]{Construction}
\newtheorem{remark}[theorem]{Remark}
\def\Z{\mathbb{Z}}
\def\Pi{\mathbb{P}^{\infty}}
\def\qed{\hfill$\square$\medskip}
\def\Zpk{\mathbb{Z}/p^{k}}
\def\Zpk1{\mathbb{Z}/p^{k-1}}
\def\sl2{\widetilde{SL_{2}(\Z)}}
\DeclareMathOperator{\ord}{ord}
\DeclareMathOperator{\Exp}{Exp}
\title{On the Maximal Cross Number of Unique Factorization Indexed Multisets}
\author{Daniel Kriz\\ Mathematics Department, Princeton University}
\email{dkriz@princeton.edu}
\begin{document}

\begin{abstract} In this paper, we study a conjecture of Gao and Wang concerning a proposed formula $K_1^*(G)$ for the maximal cross number $K_1(G)$ taken over all unique factorization indexed multisets over a given finite abelian group $G$. As a corollary of our first main result, we verify the conjecture for abelian groups of the form $C_{p^m}\oplus C_p, C_{p^m}\oplus C_q, C_{p^m}\oplus C_q^2$, $C_{p^m}\oplus C_r^n$  where $p,q$ are distinct primes and $r\in\{2,3\}$. In our second main result we verify that $K_1(G) = K_1^*(G)$ for groups of the form $C_r\oplus C_{p^m}\oplus C_p, C_{rp^mq}$ and $C_r\oplus C_p \oplus C_q^2$ for $r \in \{2,3\}$ given some restrictions on $p$ and $q$. We also study general techniques for computing and bounding $K_1(G)$, and derive an asymptotic result which shows that $K_1(G)$ becomes arbitrarily close to $K_1^*(G)$ as the smallest prime dividing $|G|$ goes to infinity, given certain conditions on the structure of $G$. We also derive some necessary properties of the structure of unique factorization indexed multisets which would hypothetically violate $k(S) \le K_1^*(G)$. 
\end{abstract}

\maketitle

\section{Introduction and Preliminaries}\label{introduction}
Throughout let $(G,+)$ be a finite abelian group (written additively). 
Let $S = \{g_1,\ldots,g_{\ell}\}$, $\ell\in \mathbb{N}$, be a finite indexed multiset of elements of $G$. In \cite{GaoWang}, Gao and Wang consider sequences rather than indexed multisets. However, the notion of indexed multisets seems more natural in the context of our discussion, as giving an ordering on the elements of our set is unnecessary and we only need the indexing to distinguish between copies of the same element. To any subset $I\subseteq [\ell]$, we associate a submultiset $S(I) := \{g_i\in S : i\in I\} \subseteq S$. Let 
$$\sigma(S) := \sum_{g\in S} g$$
denote the sum of the elements of $S$ (with multiplicity). By convention $\sigma(\emptyset) = 0$. We call $S$ \emph{zero-sum} if $\sigma(S) = 0$, we call $S$ \emph{minimal zero-sum} if $\sigma(S) = 0$ and for any $\emptyset\subsetneq S'\subsetneq S$ we have $\sigma(S') \neq 0$ and we call $S$ \emph{zero-sum free} if for any $\emptyset \subsetneq S' \subseteq S$ we have $\sigma(S')\neq 0$.  For any indexed multiset $S$ over $G$, let $|S|$ denote the number of elements of $S$ counted with multiplicity. 

Now for any indexed multiset $S = \{g_1,\ldots,g_{\ell}\}$ over $G\setminus\{0\}$ (i.e., with elements contained in $G\setminus\{0\}$), an \emph{irreducible factorization} of $S$ is a decomposition of the indexing set $[\ell]$
$$[\ell] = \bigsqcup_{i=1}^m I_i$$
where $S(I_i)$ is minimal zero-sum for each $1\le i\le m$. We often refer to the $S(I_i)$ as \emph{irreducible factors} of the irreducible factorization $\bigsqcup_{i=1}^m I_i$. We consider two irreducible factorizations $\bigsqcup_{i=1}^m I_i$ and $\bigsqcup_{j=1}^n J_j$ equivalent if and only if $m=n$ and $\{I_1,\ldots,I_m\} = \{J_1,\ldots,J_n\}$. A zero-sum indexed multiset $S$ over $G\setminus\{0\}$ with precisely one equivalence class of irreducible factorizations is called a \emph{unique factorization indexed multiset} (which we will henceforth denote by ``UFIM" for brevity).

The above notions have interpretations in algebraic number theory, see \cite{BaginskiChapman}.

For an element $g\in G$, let $\ord(g)$ denote its order in $G$, i.e., the smallest positive integer $n$ such that $ng = 0$. Now let $G = \bigoplus_{i=1}^r C_{n_i}$ be the unique decomposition of $G$ into a direct sum of cyclic groups such that $n_i | n_{i+1}$ for each $1\le i\le r$, and $n_1>1$. We call $r$ the \emph{rank} of $G$ and $n_r = \Exp(G)$ the \emph{exponent} of $G$. We now define the \emph{cross number}, the main quantity we will be studying.
\begin{definition}[Cross Number]
For any indexed multiset $S$ over $G$, we define the \emph{cross number of $S$} by 
$$k(S) :=\sum_{g\in S} \frac{1}{\ord(g)}$$ 
(by convention $k(\emptyset) = 0$) and define
$$K_1(G) := \max\{k(S) : S \;\text{is a UFIM over} \;G\setminus\{0\}\}.$$
\end{definition}
For a finite abelian group $G$, decompose $G$ into the direct sum of prime-power cyclic groups: $G = \bigoplus_{i=1}^n \bigoplus_{j=1}^{n_i} C_{p_i^{e_{ij}}}$ where the $p_i$ are distinct primes. Put
$$K_1^*(G) := \sum_{i=1}^n \sum_{j=1}^{n_i} \frac{p_i^{e_{ij}} - 1}{p_i^{e_{ij}}-p_i^{e_{ij}-1}} = \sum_{i=1}^n \sum_{j=1}^{n_i} \sum_{k=1}^{e_{ij}} \frac{1}{p_i^{k-1}}.$$
Again by convention $K_1^*((\{0\},+)) = 0$. Note that for any finite abelian groups $G$ and $H$, we have $K_1^*(G\oplus H) = K_1^*(G) + K_1^*(H)$. 

Gao and Wang gave the following conjecture, which will be the main focus of this paper. 

\begin{conjecture}[Gao-Wang \cite{GaoWang}] \label{K1conjecture} For any finite abelian group $G$, we have the equality 
$$K_1(G) = K_1^*(G).$$
\end{conjecture}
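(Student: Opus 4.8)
The conjecture asks to establish the two inequalities $K_1(G)\le K_1^*(G)$ and $K_1(G)\ge K_1^*(G)$ separately, and I expect the upper bound to be the entire difficulty. The lower bound $K_1(G)\ge K_1^*(G)$ should follow from an explicit construction: since $K_1^*$ is additive over direct sums of prime-power cyclic groups, it suffices to exhibit, for each cyclic factor $C_{p^e}$, a UFIM over $C_{p^e}\setminus\{0\}$ of cross number $\sum_{k=1}^{e}p^{-(k-1)}$, and then take the ``disjoint union'' of such multisets across all factors (checking that pasting UFIMs over groups with coprime exponents along independent coordinates preserves unique factorization). The natural candidate inside $C_{p^e}$ is the multiset consisting of $p-1$ copies of a generator together with a recursively-built UFIM for the subgroup $C_{p^{e-1}}$; the single minimal zero-sum factorization is forced because any minimal zero-sum subset meeting the top layer must use all $p-1$ generator copies plus one element of order $p$ in the quotient's preimage. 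So the first thing I would do is nail down this construction and the gluing lemma.

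\textbf{The upper bound.} The core of any proof of Conjecture \ref{K1conjecture} must be $k(S)\le K_1^*(G)$ for every UFIM $S$ over $G\setminus\{0\}$. The strategy I would pursue is induction on $|G|$ (or on the number of prime-power cyclic summands), peeling off one prime $p=p_i$ at a time. Fix a prime $p$ dividing $|G|$, write $G=G_p\oplus G_{p'}$ with $G_p$ the $p$-part, and let $\pi\colon G\to G/G_p\cong G_{p'}$. Given a UFIM $S$ with its unique irreducible factorization $[\ell]=\bigsqcup I_j$, I would analyze how the factorization interacts with $\pi$: each minimal zero-sum factor $S(I_j)$ maps to a zero-sum (not necessarily minimal) multiset in $G_{p'}$, so it decomposes further, and conversely the ``$p$-torsion content'' of each factor is controlled. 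The aim is to produce from $S$ either a UFIM over $G_p\setminus\{0\}$ and one over $G_{p'}\setminus\{0\}$ whose cross numbers add up to at least $k(S)$ (the contributions of each $g$ split as $1/\ord(g)=1/\ord(g_p)\cdot 1/\ord(g_{p'})$ but one must bound, not factor, so this needs care), or else to set up a direct combinatorial bound on how many elements of each order can appear in $S$.

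\textbf{Where the real work is.} The genuine obstacle is the same one that makes the Davenport-constant and cross-number literature hard: unique factorization is a global constraint that is awkward to localize. Concretely, a minimal zero-sum subset of $S$ in $G$ need not be ``built from'' minimal zero-sum subsets of the projections, and the hypothesis that $S$ has \emph{exactly one} factorization is a statement about the whole poset of sub-multisets, not something that restricts term-by-term. So the heart of the argument must be a structural lemma saying, roughly: in a UFIM, each irreducible factor has a very rigid shape (e.g. it involves at most a bounded number of elements outside any fixed subgroup, or its support projects to a zero-sum free configuration), forced by the fact that any ambiguity in how to recombine sub-pieces would yield a second factorization. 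I would try to prove such a rigidity statement by a ``swapping'' argument: if two factors $S(I_j), S(I_{j'})$ shared, after projection, a common zero-sum piece, one could exchange that piece between them to manufacture an inequivalent factorization, contradiction. Turning this into a clean enough induction to cover \emph{all} $G$ is, I suspect, exactly where the conjecture resists proof in general; I would expect to succeed only under extra hypotheses on the rank or on the primes involved (matching the partial results announced in the abstract), and I would organize the paper so that the general structural lemmas are isolated and the group-by-group verifications are corollaries of plugging concrete numerics into those lemmas.
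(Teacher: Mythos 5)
The statement you were asked to prove is an open conjecture: the paper does not prove it, states explicitly in its conclusion that ``a full resolution of Conjecture \ref{K1conjecture} still seems far away,'' and only verifies it for special families of groups (Theorem \ref{mainthm}, Corollary \ref{corollary}, Theorem \ref{mainthm2}). So there is no proof in the paper to compare yours against, and your proposal --- which candidly stops short of claiming a complete argument --- is the appropriate response. What you have written is a strategy, not a proof, and the gap you yourself identify (localizing the global unique-factorization constraint under a projection $G\to G/H$) is precisely the gap that remains open.

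That said, your roadmap tracks the paper's actual partial methods closely. Your lower-bound construction ($p-1$ copies of a generator of $C_{p^e}$ plus a recursively built UFIM in the subgroup, glued across coprime factors) is essentially the Gao--Wang construction recorded in Remark \ref{structureremark} and Proposition \ref{lowerbound}. Your upper-bound strategy --- fix a projection $\pi$, split $S$ into the part lying in $\ker(\pi)$, the ``cross terms'' whose sums land in $\ker(\pi)\setminus\{0\}$, and a residual piece whose image is again a UFIM --- is exactly Construction \ref{construction} and Proposition \ref{phiunique}, which yield $K_1(G)\le K_1(G/\ker\phi)+N_1(\ker\phi)\cdot K(G/\ker\phi)$ and then require \emph{ad hoc} work to suppress the extra terms in each family. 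Your proposed ``swapping'' rigidity lemma is in the same spirit as the paper's use of Proposition \ref{equivchar} and the maximality of $t$ in Construction \ref{construction}, though the paper never obtains a rigidity statement strong enough to handle general $G$; neither would your sketch as written, since the swap argument only rules out shared zero-sum pieces after projection and does not by itself bound the number or cross number of the cross terms $S_i$. If you were to push this further, the place to invest effort is exactly where the paper does: bounding $t$ via the Narkiewicz constant $N_1(\ker\phi)$ and bounding each $k(\phi(S_i))$ via the cross number $K(G/\ker\phi)$, both of which are themselves only conjecturally known in general --- which is why the conjecture resists proof.
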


Note that Conjecture \ref{K1conjecture} is equivalent to the statement that both $K_1(C_{p^m}) = \frac{p^m - 1}{p^m-p^{m-1}}$ for any prime-power cyclic group $C_{p^m}$ and $K_1$ is additive over direct sums, i.e. for any two finite abelian groups $G,H$ we have $K_1(G\oplus H) = K_1(G) + K_1(H)$. 

In \cite{GaoWang}, Gao and Wang show that Conjecture \ref{K1conjecture} partially holds.

\begin{proposition}[\cite{GaoWang}]
\label{lowerbound}
For any finite abelian group $G$, we have 
$$K_1(G) \ge K_1^*(G).$$
\end{proposition}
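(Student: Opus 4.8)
The plan is to produce, for every finite abelian group $G$, an explicit UFIM $S$ over $G\setminus\{0\}$ with $k(S)=K_1^*(G)$; since $K_1(G)$ is a maximum over all such $S$, this gives $K_1(G)\ge K_1^*(G)$ at once. As $K_1^*$ is additive over direct sums and $G=\bigoplus_{i,j}C_{p_i^{e_{ij}}}$, the first step is a gluing lemma: if $G=G_1\oplus G_2$ and $S_1,S_2$ are UFIMs over $G_1\setminus\{0\}$ and $G_2\setminus\{0\}$, then, viewing $S_1,S_2$ inside $G$ via the canonical inclusions and taking the disjoint union of their index sets, $S_1\sqcup S_2$ is a UFIM over $G\setminus\{0\}$ with $k(S_1\sqcup S_2)=k(S_1)+k(S_2)$. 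The key point: if $T\subseteq S_1\sqcup S_2$ is minimal zero-sum, split $T=T^{(1)}\sqcup T^{(2)}$ according to which summand each index comes from; the $G_1$- and $G_2$-components of $\sigma(T)$ vanish separately, so $\sigma(T^{(1)})=\sigma(T^{(2)})=0$, and since $T^{(1)}$ is then a zero-sum submultiset of $T$, minimality forces $T^{(1)}=\emptyset$ or $T^{(1)}=T$, i.e. $T\subseteq S_1$ or $T\subseteq S_2$. Hence every irreducible factor of $S_1\sqcup S_2$ lies wholly in $S_1$ or wholly in $S_2$, so the irreducible factorizations of $S_1\sqcup S_2$ are exactly the unions of one irreducible factorization of $S_1$ with one of $S_2$; uniqueness is thereby inherited. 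Iterating reduces everything to building, for each prime power $p^m$, a UFIM $S$ over $C_{p^m}\setminus\{0\}$ with $k(S)=K_1^*(C_{p^m})=\sum_{k=1}^m p^{-(k-1)}$.

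For $C_{p^m}=\Z/p^m\Z$ I would build $S$ level by level. For $1\le k\le m$ set $g_k:=p^{m-k}$ (of order $p^k$) and $g_k':=-(p-1)g_k\bmod p^m$; since $\gcd(p-1,p)=1$ the element $g_k'$ again has order $p^k$, and $g_k'\equiv g_k$ when $k=1$. Let $T_k$ be the indexed multiset with $p-1$ copies of $g_k$ and one copy of $g_k'$. Then $\sigma(T_k)=(p-1)g_k-(p-1)g_k=0$, and every proper nonempty subsum of $T_k$ is an integer multiple $t\,g_k$ with $1\le|t|\le p-1<p^k$, hence nonzero since $\ord(g_k)=p^k$; so $T_k$ is minimal zero-sum. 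Put $S:=T_1\sqcup\cdots\sqcup T_m$, whose elements all lie in $C_{p^m}\setminus\{0\}$. Because $\ord(g_k)=\ord(g_k')=p^k$, we get $k(T_k)=p\cdot p^{-k}=p^{-(k-1)}$, so $k(S)=\sum_{k=1}^m p^{-(k-1)}=K_1^*(C_{p^m})$, and $T_1\sqcup\cdots\sqcup T_m$ is visibly an irreducible factorization of $S$.

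It remains to verify that $S$ is a UFIM, that is, that $T_1\sqcup\cdots\sqcup T_m$ is its \emph{only} irreducible factorization; equivalently, that every minimal zero-sum submultiset $T\subseteq S$ equals some $T_k$. Writing $a_k$ for the number of $g_k$-copies and $b_k\in\{0,1\}$ for the number of $g_k'$-copies that $T$ takes from $T_k$, we have $\sigma(T)=\sum_{k=1}^m c_k\,p^{m-k}$ with $c_k:=a_k-(p-1)b_k$, so $|c_k|\le p-1$ for every $k$. Now $\sigma(T)\equiv0\pmod{p^m}$: reducing mod $p$ forces $c_m=0$; dividing the remaining sum by $p$ and reducing mod $p$ forces $c_{m-1}=0$; continuing, $c_k=0$ for all $k$. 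Thus $a_k=(p-1)b_k$ for each $k$, so $T$ meets each $T_k$ in either the empty set or all of $T_k$; since each $T_k$ is itself zero-sum, minimality of $T$ forces $T=T_k$ for exactly one $k$. This establishes that $S$ is a UFIM, hence $K_1(G)\ge k(S)=K_1^*(G)$. I expect the genuinely delicate part to be precisely this last step — the ``balanced base-$p$'' cancellation showing $\sigma(T)=0$ forces all $c_k=0$, which is what makes the factorization rigid — rather than the construction of the $T_k$ or the cross-number bookkeeping, which are routine; one must also take care that the gluing lemma transfers the \emph{uniqueness}, not merely the existence, of factorizations.
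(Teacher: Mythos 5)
Your construction for $C_{p^m}$ --- $p-1$ copies of $p^{m-k}$ plus one copy of $-(p-1)p^{m-k}$ for each $1\le k\le m$, glued across direct summands --- is exactly the Gao--Wang multiset the paper records in Remark \ref{structureremark}, and your verification (the gluing lemma plus the balanced base-$p$ rigidity argument) correctly fills in the details that the paper defers to \cite{GaoWang}. The proof is correct and takes essentially the same approach as the paper.
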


\begin{remark} \label{structureremark} In \cite{GaoWang}, Gao and Wang construct the following UFIM whose cross number equals $K_1^*(C_{p^m})$, in particular proving Proposition \ref{lowerbound}. For any $x\in C_{p^m}\setminus\{0\}$, let $S_x^k$ denote the indexed multiset over $C_{p^m}\setminus\{0\}$ in which $x$ occurs $k$ times. Let $\gamma$ be a generator of $C_{p^m}$ and take the indexed multiset
$$S = \left(\bigsqcup_{i=1}^m  S_{p^{i-1}\gamma}^{p-1}\right) \sqcup \left(\bigsqcup_{i=1}^m S_{(1-p)p^{i-1}\gamma}^1\right).$$
\end{remark}

Gao and Wang verified Conjecture \ref{K1conjecture} in \cite{GaoWang} for special families of abelian groups.

\begin{theorem}[Gao-Wang \cite{GaoWang}] \label{GaoWang} Conjecture \ref{K1conjecture} holds, i.e., $K_1(G) = K_1^*(G)$, for $G$ of the form:
\begin{enumerate}
\item $C_{p^m}$, $p$ prime, $m\in\mathbb{N},$
\item $C_{pq}$, $p, q$ prime$,$
\item $C_2^m$, $m\in\mathbb{N},$
\item $C_3^m$, $m\in\mathbb{N},$
\item $C_p^2$, $p$ prime.
\end{enumerate} 
\end{theorem}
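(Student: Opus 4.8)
The statement to prove is Theorem~\ref{GaoWang}, that $K_1(G) = K_1^*(G)$ for the five listed families. By Proposition~\ref{lowerbound} the inequality $K_1(G) \ge K_1^*(G)$ is already available, so in every case the real content is the upper bound $K_1(G) \le K_1^*(G)$: one must show that \emph{no} UFIM over $G\setminus\{0\}$ has cross number exceeding $K_1^*(G)$. The plan is to treat the cyclic prime-power case $C_{p^m}$ first and then bootstrap the remaining cases, since $C_{pq}\cong C_p\oplus C_q$, $C_p^2$, $C_2^m$, $C_3^m$ are all built from small prime-power cyclic pieces and one expects an additivity-type argument plus the base case to suffice.

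\emph{Case $C_{p^m}$.} I would let $S$ be a UFIM over $C_{p^m}\setminus\{0\}$ with a (unique) irreducible factorization $[\ell]=\bigsqcup_{i=1}^m I_i$, and analyze the structure of the irreducible (= minimal zero-sum) factors $S(I_i)$. In a cyclic group $C_{p^m}$, after choosing a generator $\gamma$, each nonzero element is $a\gamma$ with $1\le a < p^m$, and an element of order $p^k$ contributes $1/p^k$ to $k(S)$. The key combinatorial fact is that a minimal zero-sum multiset in $C_{p^m}$ has a rigid shape: writing elements by their order, one can track the $p$-adic valuations and show that a minimal zero-sum factor containing an element of order $p^k$ is forced to contain a controlled number of elements of each smaller order. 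The uniqueness of factorization then severely constrains how factors can overlap in "order profile": if two distinct factors both contained, say, too many elements of a given order, one could swap sub-multisets between them (producing an alternative factorization) unless some arithmetic obstruction prevents it. I would make this precise by an induction on $m$, peeling off the order-$p^m$ "layer": the elements of maximal order, modulo $pC_{p^m}$, must themselves assemble into zero-sum configurations in $C_{p^m}/pC_{p^m}\cong C_p$ within each factor, and the residual multiset lives essentially in $pC_{p^m}\cong C_{p^{m-1}}$. Bounding the order-$p^m$ contribution by $(p-1)/p$ per "slot" and appealing to the inductive hypothesis for the rest should yield $k(S)\le K_1^*(C_{p^m})=\sum_{k=1}^m p^{-(k-1)}$, with the extremal multiset of Remark~\ref{structureremark} showing tightness.

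\emph{The additive cases.} For $C_{pq}$ with $p\ne q$, I would use the Chinese Remainder decomposition $C_{pq}\cong C_p\oplus C_q$ and show $K_1(C_p\oplus C_q)\le K_1(C_p)+K_1(C_q)$: given a UFIM $S$ over $(C_p\oplus C_q)\setminus\{0\}$, project to each coordinate. Because $\gcd(p,q)=1$, an element of order $pq$ has order $p$ in one factor and $q$ in the other, and its cross-number contribution $1/(pq) = (1/p)\cdot(1/q)$ interacts cleanly with the projections; the crucial point is that a minimal zero-sum multiset must be minimal zero-sum in a compatible sense after projection, and uniqueness of factorization is inherited, so the two projected multisets are (close to) UFIMs over $C_p$ and $C_q$ respectively. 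Summing the two base-case bounds gives the result. For $C_2^m$ and $C_3^m$ the prime is fixed small, so one can argue directly: in $C_r^m$ with $r\in\{2,3\}$ every nonzero element has order $r$, so $k(S) = |S|/r$, and the task reduces to bounding the total size of a UFIM; here a minimal zero-sum multiset over $C_r^m$ has size at most something like $m+1$ (for $r=2$) or can be controlled by the Davenport-constant-type bound, and the unique-factorization hypothesis forces the factors to be "disjoint in support direction," limiting $|S|$ so that $|S|/r \le K_1^*(C_r^m) = m$. For $C_p^2$ a similar but slightly more delicate count of how two rank-$\le 2$ minimal zero-sum sets can coexist uniquely yields $k(S)\le 2(p-1)/p < 2 = K_1^*(C_p^2)$... wait, here one needs $K_1^*(C_p^2) = 2$ exactly, so the bound must be sharp; I would get this by showing a UFIM over $C_p^2$ has at most $2p$ elements all of order $p$.

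\emph{Main obstacle.} The hard part will be the structural rigidity lemma for minimal zero-sum multisets in $C_{p^m}$ together with the combinatorial argument that uniqueness of factorization prevents "mass" from accumulating beyond the $K_1^*$ budget — essentially proving that any UFIM looks, up to the allowed moves, like the extremal example of Remark~\ref{structureremark}. Controlling the exchange/swap arguments (showing that an excess of low-order elements always creates an alternative factorization) is where the real work lies; the reductions to this core case ($C_{pq}$, $C_r^m$, $C_p^2$) are comparatively routine once the right projection and counting lemmas are set up.
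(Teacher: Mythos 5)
First, note that the paper does not prove this statement at all: Theorem \ref{GaoWang} is imported verbatim from \cite{GaoWang} (and, for the $C_p^2$ case, ultimately rests on the Narkiewicz-constant computation of \cite{GaoLiPeng}), so there is no in-paper argument to compare yours against. Judged on its own terms, your sketch has genuine gaps. The most concrete one is in the $C_{pq}$ step: you assert that the two coordinate projections of a UFIM are ``(close to) UFIMs'' over $C_p$ and $C_q$. This is false as stated --- the paper itself records the counterexample $S=\{(1,1),(2,2),(1,2),(2,1)\}$ over $C_3^2$, whose projection to one coordinate is $\{1,2,1,2\}$ and admits two inequivalent irreducible factorizations. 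Repairing this requires exactly the bookkeeping of Construction \ref{construction} and Proposition \ref{phiunique}: one must first strip out the kernel elements $T$ and a maximal family of zero-sum free ``cross-term'' blocks $S_1,\dots,S_t$ whose sums land in the kernel, and only the projection of the remainder $S''$ is a UFIM; the cross terms must then be charged against $N_1(\ker(\phi))$ and $K(G/\ker(\phi))$, which is where all the work in the paper's Theorem \ref{mainthm} lives.

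Second, your treatment of $C_2^m$, $C_3^m$ and $C_p^2$ quietly reduces the problem to computing the Narkiewicz constant: since every nonzero element has order $r$, $k(S)=|S|/r$, and you need $|S|\le r\,K_1^*(C_r^m)$, i.e.\ $N_1(C_r^m)=rm$ and $N_1(C_p^2)=2p$. These do not follow from a ``Davenport-constant-type bound'' --- they are the hard results cited as Theorem \ref{N1}, and your sketch offers no argument for them ($N_1(C_p^2)=2p$ in particular is the recent Gao--Li--Peng theorem). Similarly, in the base case $C_{p^m}$ the ``structural rigidity lemma'' you defer to the end is the entire content of Gao and Wang's proof; the exchange/swap argument you gesture at is not obviously available, since swapping sub-multisets between two irreducible factors only yields an alternative factorization when the swapped pieces have equal sums, and producing such pieces from an ``excess'' of low-order elements is precisely the difficulty. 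So while the overall architecture (prime-power base case plus additivity over independent factors) matches how one would actually prove the theorem, none of the three load-bearing ingredients is supplied.
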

The first main result of this paper, proven in Section \ref{mainresult}, is

\begin{theorem} \label{mainthm} Let $p,q$ be distinct primes and $m,n\in \mathbb{N}$. Then
\begin{enumerate}
\item\label{1} $K_1(C_{p^m}\oplus C_p^n) \le K_1(C_{p^m}) + K_1(C_p^{n+1})-1,$
\item \label{2} $K_1(C_{p^m}\oplus C_q^n) \le K_1(C_{p^m}) + K_1(C_q^n).$
\end{enumerate}
\end{theorem}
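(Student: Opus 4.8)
The strategy is to take a UFIM $S$ over $G\setminus\{0\}$ achieving $k(S) = K_1(G)$ and exhibit a structural decomposition of $S$ that respects the direct-sum factorization $G = C_{p^m}\oplus C_p^n$ (resp.\ $C_{p^m}\oplus C_q^n$). The natural device is the two projections $\pi_1\colon G\to C_{p^m}$ and $\pi_2\colon G\to C_p^n$ (resp.\ $C_q^n$). Given the unique irreducible factorization $[\ell]=\bigsqcup_{i=1}^m I_i$ of $S$, each block $S(I_i)$ is minimal zero-sum, so its image $\pi_2(S(I_i))$ is zero-sum in $C_p^n$; the key observation is that because $p$-torsion and the rest behave differently, one can track how the "$p$-part" of each element contributes. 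Concretely, for part \ref{2} I would split each $g\in S$ according to the (internal) direct sum $G = C_{p^m}\oplus C_q^n$ into $g = a_g + b_g$, note $\ord(g) = \ord(a_g)\ord(b_g)$ with $\ord(a_g)\mid p^m$ and $\ord(b_g)\mid q^n$, so that $\frac{1}{\ord(g)} = \frac{1}{\ord(a_g)}\cdot\frac{1}{\ord(b_g)} \le \frac{1}{2}\left(\frac{1}{\ord(a_g)} + \frac{1}{\ord(b_g)}\right)$ only when both orders exceed $1$ — this crude bound is too weak, so instead I expect to argue that the multiset of $a_g$'s (with multiplicity, discarding those that are $0$) is itself zero-sum free or decomposes into few minimal zero-sum pieces, using uniqueness of factorization to control how many blocks $I_i$ can have $\pi_1(S(I_i)) \neq 0$, and similarly for the $b_g$'s; then $k(S) = \sum \frac{1}{\ord(a_g)\ord(b_g)}$ is bounded by a sum over a UFIM-like structure in $C_{p^m}$ plus one in $C_q^n$.

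The cleaner route, and the one I would actually pursue, is the following. Since $\gcd(p^m, q^n) = 1$, every element $g$ of order $d$ in $G$ with $d = d_1 d_2$, $d_1 \mid p^m$, $d_2 \mid q^n$, lies in a minimal zero-sum block $S(I_i)$ of size $\ge d_1$ (if $d_1>1$) forced by the $C_{p^m}$-component and of size $\ge d_2$ forced by the $C_q^n$-component; but more importantly, I would use the fact proved by Gao–Wang (Theorem \ref{GaoWang}(1), (2) and its method) together with a "coloring" of the indices: partition $[\ell] = A \sqcup B$ where $A = \{i : \ord(g_i) \text{ is a power of } p\}$ wait — that misses mixed-order elements, so refine: assign to $A$ those with $p\mid\ord(g_i)$ handled via $\pi_1$ and to $B$ those with $q \mid \ord(g_i)$, with overlap for mixed elements counted in both but with the reciprocal order split multiplicatively. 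The heart of the argument is a lemma asserting that the submultiset $\pi_1(S)$ (nonzero part) over $C_{p^m}\setminus\{0\}$ inherits enough of a unique-factorization property — or at least a bound $k(\pi_1(S)|_{\neq 0}) \le K_1^*(C_{p^m})$ — from $S$ being a UFIM, and symmetrically for $\pi_2(S)$ over $C_q^n\setminus\{0\}$, after which
$$k(S) = \sum_{g\in S}\frac{1}{\ord(\pi_1 g)\,\ord(\pi_2 g)} \le \sum_{g\in S,\ \pi_1 g\neq 0}\frac{1}{\ord(\pi_1 g)} + \sum_{g\in S,\ \pi_1 g = 0}\frac{1}{\ord(\pi_2 g)} \le K_1^*(C_{p^m}) + K_1^*(C_q^n),$$
using $\frac{1}{\ord(\pi_1 g)\ord(\pi_2 g)} \le \frac{1}{\ord(\pi_1 g)}$ when $\pi_1 g \neq 0$ and $=\frac{1}{\ord(\pi_2 g)}$ when $\pi_1 g = 0$, and then invoking Theorem \ref{GaoWang} for $K_1(C_{p^m}) = K_1^*(C_{p^m})$ and $K_1(C_q^n) = K_1^*(C_q^n)$ — wait, the latter is not known in general, so for part \ref{2} I must instead bound $k(\pi_2 S|_{\pi_1 g = 0})$ directly by $K_1(C_q^n)$, which requires showing the relevant submultiset of $C_q^n$ is a UFIM; this is where the uniqueness hypothesis on $S$ gets used, since a second factorization of that submultiset would lift to a second factorization of $S$.

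For part \ref{1}, where $p = q$, the projections no longer have coprime images and the clean multiplicative split fails; here I would instead work with the filtration $0 \subset pC_{p^m}\oplus 0 \subset \cdots$, or better, lift to $C_{p^m}\oplus C_p^{n+1}$ by a standard "padding" trick: embed $G = C_{p^m}\oplus C_p^n$ as a subgroup of $C_{p^m}\oplus C_p^{n+1}$ and relate UFIMs, which accounts for the $+1$ correction term and the appearance of $C_p^{n+1}$ rather than $C_p^n$ on the right-hand side — the $-1$ being the cross-number cost of a single coset-representative adjustment.

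\textbf{Main obstacle.} The crux is the claimed lemma that a projection (or section) of a UFIM $S$ retains a unique-factorization-type property strong enough to bound its cross number — projections of minimal zero-sum sets are zero-sum but generally \emph{not} minimal, and they can merge or split, so lifting a hypothetical second factorization of the projected multiset back to $S$ is delicate and may fail outright. I expect the real work to be isolating exactly which submultisets of $S$ (not arbitrary projections) do inherit uniqueness — likely those supported on a single coset of $C_p^n$ in part \ref{1}, or on the kernel $C_q^n = \ker\pi_1$ in part \ref{2} — and carefully bounding the cross-number contribution of the "mixed" elements that straddle both factors, which is where the inequality (rather than equality) in the theorem comes from.
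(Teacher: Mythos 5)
There is a genuine gap in both parts. For part \tref{2}, your skeleton is right — split off the elements lying in $\ker\pi_1\cong C_q^n$ (which do form a sub-UFIM, so contribute at most $K_1(C_q^n)$) and push the rest down to $C_{p^m}$ — but the step you yourself flag as the obstacle is exactly where the proposal fails: the bound $\sum_{\pi_1 g\neq 0}\frac{1}{\ord(\pi_1 g)}\le K_1^*(C_{p^m})$ is false in general, because $\pi_1$ of a UFIM need not be a UFIM. What is true (and what the paper proves) is that after extracting a maximal collection of $t$ disjoint zero-sum free ``cross-term'' blocks $S_1,\dots,S_t$ with $\sigma(S_i)\in\ker\pi_1\setminus\{0\}$, the remainder projects to a UFIM over $C_{p^m}$ while each $\pi_1(S_i)$ is an extra minimal zero-sum multiset; so the correct bound is $k(\pi_1(S'))\le K_1(C_{p^m})+t\cdot K(C_{p^m})=K_1(C_{p^m})+t$, with an error term $t$ that can be large. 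The paper kills this term by a budget argument you do not have: $T\sqcup\bigsqcup_i\{\sigma(S_i)\}$ is a UFIM over $C_q^n$, so $|T|+t\le N_1(C_q^n)=qK_1(C_q^n)$; and since every element outside $\ker\pi_1$ has order either $p^i$ or $p^iq$, one gets $k(S')=\frac1q k(\pi_1(S'))+\frac{q-1}{q}\sum_i\frac{a_{i0}}{p^i}$. The factor $\frac1q$ shrinks the error to $t/q\le K_1(C_q^n)-\frac{a_{01}}{q}$, which exactly absorbs the kernel contribution $k(T)=\frac{a_{01}}{q}$, and the leftover $\frac{q-1}{q}\sum_i\frac{a_{i0}}{p^i}$ is $\le\frac{q-1}{q}K_1(C_{p^m})$ because the pure-$p$-order elements lie in $\ker\pi_2$ and hence form a UFIM over $C_{p^m}$. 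Without this offsetting of $t$ against $N_1(\ker\pi_1)$ the inequality does not close.

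For part \tref{1} the proposed route (embed $C_{p^m}\oplus C_p^n$ into $C_{p^m}\oplus C_p^{n+1}$ and ``pad'') is not the mechanism and there is no argument that it produces the stated bound; relating UFIMs over a subgroup to UFIMs over the ambient group does not obviously control cross numbers in the needed direction. The paper instead applies the same kernel-extraction machinery to the multiplication-by-$p$ map $\phi\colon C_{p^m}\oplus C_p^n\to C_{p^{m-1}}$, whose kernel is the full $p$-torsion subgroup $\cong C_p^{n+1}$ — that is where $C_p^{n+1}$ enters, not via an embedding. One then uses $k(\phi(S'))=p\,k(S')$, the bound $t\le N_1(C_p^{n+1})-a_1=pK_1(C_p^{n+1})-a_1$, and $K(C_{p^{m-1}})=1$, and the $-1$ falls out of the arithmetic $\frac1p\bigl(K_1(C_{p^{m-1}})+pK_1(C_p^{n+1})\bigr)=K_1(C_{p^m})-1+K_1(C_p^{n+1})$ rather than from any coset-representative adjustment. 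In short, you have correctly located the difficulty but not supplied the two ingredients that resolve it: the Narkiewicz-constant budget on $|T|+t$ and the order-ratio factor ($\frac1q$ resp.\ $\frac1p$) that discounts the cross-term error.
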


This result in particular verifies Conjecture \ref{K1conjecture} for more families of abelian groups:

\begin{corollary} \label{corollary} For $p,q$ distinct (possibly even) primes, and any $m,n\in\mathbb{N}$, we have $K_1(G) = K_1^*(G)$ for the following groups $G$:
\begin{enumerate}
\item $C_{p^m}\oplus C_p,$
\item $C_{p^m}\oplus C_q,$
\item $C_{p^m}\oplus C_q^2,$
\item $C_{p^m} \oplus C_2^n,$
\item $C_{p^m}\oplus C_3^n.$
\end{enumerate}
\end{corollary}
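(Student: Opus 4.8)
The plan is to deduce everything from Theorem \ref{mainthm} together with the lower bound in Proposition \ref{lowerbound}, the additivity $K_1^*(G\oplus H) = K_1^*(G)+K_1^*(H)$, and the cases of Conjecture \ref{K1conjecture} already known from Theorem \ref{GaoWang}. In each of the five cases it suffices to prove the upper bound $K_1(G)\le K_1^*(G)$, since Proposition \ref{lowerbound} gives the reverse inequality. I would first record the numerical values I will need: from Theorem \ref{GaoWang}(1), $K_1(C_{p^m}) = K_1^*(C_{p^m}) = \sum_{k=1}^m p^{1-k}$; from (5), $K_1(C_p^2) = K_1^*(C_p^2) = 2$; from (2), $K_1(C_{pq}) = K_1^*(C_{pq}) = 2$ for distinct primes (note $C_{p^m}\oplus C_q \cong C_{p^mq}$ only when $m=1$, so for general $m$ I instead use part (2) of Theorem \ref{mainthm} directly); and from (3),(4), $K_1(C_r^n) = K_1^*(C_r^n) = n$ for $r\in\{2,3\}$.

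Cases (2), (3), (4), (5) are immediate from Theorem \ref{mainthm}(2): for $G = C_{p^m}\oplus C_q^n$ with $q$ prime and $n\in\{0,1,2\}$ (and for the families $C_{p^m}\oplus C_2^n$, $C_{p^m}\oplus C_3^n$), that theorem gives $K_1(G)\le K_1(C_{p^m}) + K_1(C_q^n)$, and since each summand on the right equals the corresponding $K_1^*$ by Theorem \ref{GaoWang} (using $K_1(C_q^0)=K_1(\{0\})=0$, $K_1(C_q^1)=K_1^*(C_q)=1$, $K_1(C_q^2)=K_1^*(C_q^2)=2$, $K_1(C_2^n)=n$, $K_1(C_3^n)=n$), additivity of $K_1^*$ yields $K_1(G)\le K_1^*(C_{p^m})+K_1^*(C_q^n) = K_1^*(G)$. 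Combined with Proposition \ref{lowerbound} this gives equality. The only case requiring Theorem \ref{mainthm}(1) is (1), $G = C_{p^m}\oplus C_p$: here take $n=1$ in Theorem \ref{mainthm}(1) to get $K_1(C_{p^m}\oplus C_p)\le K_1(C_{p^m}) + K_1(C_p^2) - 1 = K_1^*(C_{p^m}) + 2 - 1 = K_1^*(C_{p^m}) + K_1^*(C_p) = K_1^*(C_{p^m}\oplus C_p)$, again using Theorem \ref{GaoWang}(1) and (5) for the identifications and additivity of $K_1^*$ for the last step; Proposition \ref{lowerbound} closes the argument.

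There is essentially no obstacle here — the corollary is a bookkeeping consequence — but the one point that needs care is matching the $-1$ correction term in Theorem \ref{mainthm}(1) against the fact that $K_1^*(C_p^{n+1}) = K_1^*(C_p^n) + 1$, i.e. that passing from $C_p^n$ to $C_p^{n+1}$ raises $K_1^*$ by exactly $1$; this is what makes the bound $K_1(C_{p^m})+K_1(C_p^{n+1})-1$ coincide with $K_1^*(C_{p^m}\oplus C_p^n)$ precisely when Conjecture \ref{K1conjecture} is already known for $C_p^{n+1}$, which for $n=1$ is Theorem \ref{GaoWang}(5). I would also remark that the parenthetical "possibly even" simply signals that no parity hypothesis on $p,q$ is needed, the small primes $2,3$ being covered by Theorem \ref{GaoWang}(3),(4),(5); and that $C_{p^m}\oplus C_q$ includes the previously known case $C_{pq}$ of Theorem \ref{GaoWang}(2) as the instance $m=1$, but is genuinely more general for $m\ge 2$.
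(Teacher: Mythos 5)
Your proposal is correct and takes exactly the paper's route: the paper's entire proof of this corollary is the one-line observation that it ``follows directly from Proposition~\ref{lowerbound}, Theorem~\ref{GaoWang} and Theorem~\ref{mainthm},'' which is precisely the bookkeeping you carry out (Theorem~\ref{mainthm}(1) with $n=1$ for case (1), Theorem~\ref{mainthm}(2) for the rest, plus additivity of $K_1^*$). One parenthetical slip worth fixing: $C_{p^m}\oplus C_q\cong C_{p^mq}$ for \emph{all} $m$ by the Chinese Remainder Theorem, not only for $m=1$; your actual point there --- that Theorem~\ref{GaoWang}(2) covers only the case $m=1$, so Theorem~\ref{mainthm}(2) is needed for general $m$ --- is nonetheless correct.
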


Our second main result, proven in Section \ref{2pq}, concerns the families for which Conjecture \ref{K1conjecture} ``eventually" holds.

\begin{theorem} \label{mainthm2} Fix any $c\in\mathbb{R}_{\ge1}$ and $r\in \{2,3\}$. Suppose $G = \bigoplus_{i=1}^n\bigoplus_{j=1}^{n_i} C_{p_i^{e_{ij}}}$, where  $p_i>r$ are distinct primes for $1\le i\le n$, and  $p_1< \cdots < p_n< cp_1$ if $n>1$, is a finite abelian group with $K_1(G) = K_1^*(G)$ and $k(C_r\oplus G) = k^*(C_r\oplus G)$. Then if $p_1$ is large enough so that 
$$\frac{1}{r} + \frac{1}{p_1}K_1^*\left(\bigoplus_{j=1}^{n_1} C_{p_1^{e_{1j}}}\right) + \sum_{i=2}^n \sum_{j=1}^{n_i} \frac{(cp_1)^{e_{ij}}-1}{(cp_1)^{e_{ij}+1} - (cp_1)^{e_{ij}}} \ge \frac{\log_2 (rc^{(\sum_{i=2}^{n} \sum_{j=1}^{n_i} e_{ij})} p_1^{(\sum_{i=1}^n \sum_{j=1}^{n_i} e_{ij})})}{p_1}$$
(note that as $p_1\rightarrow \infty$, the left hand side tends to $\frac{1}{r}$ while the right hand side tends to 0), we have 
$$K_1(C_r\oplus G) = K_1^*(C_r\oplus G).$$

Furthermore, if equality does not hold in the constraint for $p_1$ above, then any UFIM $S$ over $(C_r\oplus G)\setminus\{0\}$ with $k(S) = K_1(C_r\oplus G)$ has a decomposition 
$$S = S_r \sqcup S_G$$
such that $S_r$ is a UFIM over $C_r\setminus\{0\}$ and $S_G$ is a UFIM over $G\setminus\{0\}$. 
\end{theorem}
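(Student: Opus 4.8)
The plan is to deduce everything from one structural assertion: \emph{a UFIM over $(C_r\oplus G)\setminus\{0\}$ of maximal cross number contains no ``mixed'' element, i.e.\ no element $(a,b)$ with $a\ne 0$ and $b\ne 0$}. Granting this, let $S$ be such a UFIM, with unique factorization $S=\bigsqcup_{i=1}^{m}I_i$ into minimal zero-sum factors $T_i=S(I_i)$. Reading $\sigma(T_i)=0$ coordinatewise, the submultiset of elements of $T_i$ of the form $(a,0)$ is zero-sum and so is the submultiset of those of the form $(0,b)$; by minimality of $T_i$ one of these two is empty, so each $T_i$ lies entirely in $C_r\times\{0\}$ or entirely in $\{0\}\times G$. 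Collecting the factors of each kind gives $S=S_r\sqcup S_G$, and $S_r,S_G$, being unions of irreducible factors of the UFIM $S$, are themselves UFIMs, over $C_r\setminus\{0\}$ and over $G\setminus\{0\}$ respectively. Using $K_1(C_r)=K_1^*(C_r)=1$ (Theorem \ref{GaoWang}), the hypothesis $K_1(G)=K_1^*(G)$, and additivity of $K_1^*$, we obtain $k(S)=k(S_r)+k(S_G)\le 1+K_1^*(G)=K_1^*(C_r\oplus G)$; with Proposition \ref{lowerbound} this yields $K_1(C_r\oplus G)=K_1^*(C_r\oplus G)$ together with the asserted decomposition. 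So the entire theorem reduces to the structural fact.

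\textbf{Surgery on a mixed element.} To prove it, suppose some UFIM $S$ over $(C_r\oplus G)\setminus\{0\}$ contains a mixed element $g=(a,b)$. Since $\gcd(r,|G|)=1$ we have $\ord(g)=r\cdot\ord(b)\ge rp_1$. The basic move is \emph{surgery}: delete $g$ and insert the two non-mixed elements $(a,0)$ and $(0,b)$. This preserves the total sum and changes the cross number by $\frac1r+\frac1{\ord(b)}-\frac1{r\,\ord(b)}>\frac1r$, while strictly decreasing the number of mixed elements. The catch is that the resulting indexed multiset $\tilde S$ need not be a UFIM: splitting $g$ can create new minimal zero-sum submultisets, hence new inequivalent irreducible factorizations. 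The remedy is to restore unique factorization by passing to a sub-indexed-multiset of $\tilde S$, discarding only ``cheap'' elements; concretely, one shows that the failure of unique factorization can be localized to, and repaired within, a bounded collection of irreducible factors, and that the repair can be carried out by deleting at most one element per irreducible factor, each chosen to lie in $\{0\}\times G$ and hence of cross number at most $1/p_1$.

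\textbf{Controlling the repair.} The requisite count is the bound that \emph{any} UFIM over a finite abelian group $H$ has at most $\log_2|H|$ irreducible factors: picking one element $g_i$ from each of the factors $T_1,\dots,T_m$ (each of length at least $2$, since a minimal zero-sum sequence of length $1$ would be $\{0\}$, which is excluded), one checks --- using that the zero-sum submultisets of a UFIM are exactly the unions of its irreducible factors --- that the $2^m$ partial sums $\sum_{i\in J}g_i$ over $J\subseteq\{1,\dots,m\}$ are pairwise distinct elements of $H$, so $2^m\le|H|$. Applying this with $H=C_r\oplus G$ and using $p_i<cp_1$, the number of elements deleted in the repair is at most $\log_2|C_r\oplus G|\le\log_2\!\bigl(r\,c^{(\sum_{i\ge 2,j}e_{ij})}p_1^{(\sum_{i,j}e_{ij})}\bigr)$. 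The largeness hypothesis on $p_1$ is exactly what is needed to guarantee that the net change in cross number over one full surgery-and-repair is $\ge 0$ (and $>0$ when the displayed inequality is strict); the extra positive terms on its left-hand side reflect the finer accounting of the repair cost, namely that the deleted $G$-supported elements have orders $\ge p_1$ distributed among the prime components of $G$. Iterating surgery-and-repair then drives the mixed-element count to $0$ without decreasing the cross number, which gives $K_1(C_r\oplus G)\le K_1^*(C_r\oplus G)$; and when the constraint is strict an extremal UFIM cannot contain a mixed element at all, which is the structural fact, hence the ``Furthermore'' clause. The hypothesis $k(C_r\oplus G)=k^*(C_r\oplus G)$ enters in the repair step as a bound on the cross numbers of the minimal zero-sum factors created or destroyed there.

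\textbf{The main obstacle.} The hard part is precisely the surgery-and-repair step: determining how unique factorization degrades when a mixed generator is split, and proving that it can always be restored by deleting only $O(\log|C_r\oplus G|)$ cheap, $G$-supported elements. By comparison, the reduction in the first paragraph, the factor-count bound, and the arithmetic with $K_1^*$ and the identity $\ord(a,b)=r\,\ord(b)$ for $a\ne 0$ are routine.
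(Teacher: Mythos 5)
Your opening reduction (no mixed elements $\Rightarrow$ $S=S_r\sqcup S_G$ $\Rightarrow$ $k(S)\le 1+K_1^*(G)$) is fine, and your factor-count bound $m\le\log_2|H|$ is correct (it is Proposition \ref{factorproduct} in the paper). But the proof has a genuine gap: everything rests on the ``surgery-and-repair'' step, which you explicitly identify as the hard part and then do not prove. As described it faces a basic obstruction you never address: after splitting $(a,b)$ into $(a,0)$ and $(0,b)$ you propose to restore unique factorization by \emph{deleting} elements, but deleting elements from a zero-sum indexed multiset destroys the zero-sum property unless the deleted elements themselves sum to zero, so the ``repaired'' object is in general not a UFIM at all and cannot be compared against $K_1(C_r\oplus G)$. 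The further claims --- that the failure of unique factorization localizes to boundedly many factors, that one deletion per factor suffices, and that each deleted element can be chosen in $\{0\}\times G$ --- are all asserted without argument, and the hypothesis $k(C_r\oplus G)=k^*(C_r\oplus G)$ never actually enters your accounting in a checkable way. So the bound $K_1(C_r\oplus G)\le K_1^*(C_r\oplus G)$ is not established.

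The paper's route avoids surgery entirely. It splits into two cases according to whether some irreducible factor of $S$ lies wholly inside $C_r$ ($m_r\neq 0$) or not ($m_r=0$). If $m_r\neq 0$, then in Construction \ref{construction} applied to the projection $\phi\colon C_r\oplus G\to G$ one has $|T|+t\le N_1(C_r)=r\le 3$, and a short case check (Lemma \ref{roplus}) forces $t=0$, whence $k(S)\le K_1(C_r)+K_1(G)=K_1^*(C_r\oplus G)$ and, by Remark \ref{crossterms}, $S$ splits as $S_r\sqcup S_G$ --- this is where the ``Furthermore'' clause comes from. If $m_r=0$, one picks one element of order $\ge p_1$ from each of the $m\le\log_2|C_r\oplus G|$ irreducible factors; the complement is zero-sum free, so $k(S)\le k(C_r\oplus G)+\log_2|C_r\oplus G|/p_1$, and the hypothesis $k(C_r\oplus G)=k^*(C_r\oplus G)$ together with the displayed largeness constraint on $p_1$ (which is exactly the inequality $K_1^*(C_r\oplus G)-k^*(C_r\oplus G)\ge\log_2|C_r\oplus G|/p_1$ after bounding $p_i<cp_1$) closes the estimate. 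You would need to either supply a complete proof of your repair lemma --- including how the repaired multiset remains zero-sum --- or adopt an argument of this counting type.
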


\begin{corollary} \label{secondmaincorollary} Let $c\in\mathbb{R}_{> 1},\; m\in\mathbb{N},  \;r\in\{2,3\}$ where $r<p<q$ are distinct primes with $q\le cp$. 
\begin{enumerate}
\item If $G = C_r \oplus C_{p^m}\oplus C_p$, we have that $K_1(C_r\oplus C_{p^m} \oplus C_p) = K_1^*(C_r\oplus C_{p^m} \oplus C_p)$ for all $p$ large enough so that 
$$\frac{1}{r} + \frac{p^m-1}{p^{m+1} - p^m} + \frac{1}{p}\ge \frac{\log_2 (rp^{m+1})}{p}.$$
\item If $G = C_{rp^mq}$, we have that $K_1(C_{rp^mq}) = K_1^*(C_{rp^mq})$ for all $p$ large enough so that 
$$\frac{1}{r} + \frac{p^m-1}{p^{m+1}-p^m}  + \frac{1}{cp} \ge \frac{\log_2 (rcp^{m+1})}{p}.$$
\item If $G = C_{rpq^m}$, we have that $K_1(C_{rpq^m}) = K_1^*(C_{rpq^m})$ for all $p$ large enough so that 
$$\frac{1}{r} + \frac{1}{p} + \frac{(cp)^m-1}{(cp)^{m+1}-(cp)^m}\ge \frac{\log_2 (rc^mp^{m+1})}{p}.$$ 
\item If $G = C_r\oplus C_{p^m} \oplus C_q^2$, we have that $K_1(C_r\oplus C_{p^m}\oplus C_q^2) = K_1^*(C_r\oplus C_{p^m} \oplus C_q^2)$ for all $p$ large enough so that 
$$\frac{1}{r} + \frac{p^m-1}{p^{m+1}-p^m} + \frac{2}{cp} \ge \frac{\log (rc^2p^{m+2})}{p}.$$
\item If $G = C_r\oplus C_p^2 \oplus C_{q^m}$, we have that $K_1(C_r\oplus C_p^2 \oplus C_{q^m}) = K_1^*(C_r\oplus C_p^2 \oplus C_{q^m})$ for all $p$ large enough so that 
$$\frac{1}{r} + \frac{2}{p} + \frac{(cp)^m-1}{(cp)^{m+1}-(cp)^m} \ge \frac{\log_2 (rc^mp^{m+2})}{p}.$$
\end{enumerate}

Moreover for each of the families $C_r\oplus G$ above, if equality in the corresponding constraint for $p$ does not hold, then any UFIM $S$ over $(C_r\oplus G)\setminus\{0\}$ with $k(S) = K_1(G)$ has a decomposition 
$$S = S_r\sqcup S_G$$
such that $S_r$ is a UFIM over $S_r\setminus\{0\}$ and $S_G$ is a UFIM over $G\setminus\{0\}$. 
\end{corollary}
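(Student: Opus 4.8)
The plan is to deduce Corollary~\ref{secondmaincorollary} directly from Theorem~\ref{mainthm2}: in each of the five cases one writes the group as $C_r\oplus G$ with $G$ of the form demanded by the theorem, checks that its hypotheses are met, and verifies that the abstract constraint on $p_1$ specializes to the displayed inequality. There is no genuinely new content here; the work is bookkeeping.

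First I would put each group into the shape $C_r\oplus G$, where $G$ is a direct sum of prime-power cyclic groups whose underlying primes $p_1<\cdots<p_n$ all exceed $r$. For the three cyclic families this uses the Chinese Remainder Theorem: $C_{rp^mq}\cong C_r\oplus(C_{p^m}\oplus C_q)$ and $C_{rpq^m}\cong C_r\oplus(C_p\oplus C_{q^m})$, so there $G=C_{p^m}\oplus C_q$, resp.\ $G=C_p\oplus C_{q^m}$, with $n=2$; for family (1), $G=C_{p^m}\oplus C_p$ with $n=1$; for (4), $G=C_{p^m}\oplus C_q^2$, and for (5), $G=C_p^2\oplus C_{q^m}$, both with $n=2$. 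When $n=2$ the spacing hypothesis $p_1<p_2<cp_1$ of Theorem~\ref{mainthm2} is precisely $r<p<q\le cp$ (the $<$ versus $\le$ discrepancy being harmless, absorbed by enlarging $c$ infinitesimally or simply reading it as $\le$); when $n=1$ the spacing hypothesis is vacuous and $c$ plays no role. It then remains to supply the two substantive inputs of the theorem: the hypothesis $K_1(G)=K_1^*(G)$ (together with the auxiliary equality $k(C_r\oplus G)=k^*(C_r\oplus G)$), and the numerical constraint on $p$.

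The hypothesis $K_1(G)=K_1^*(G)$ is in every case already known from Corollary~\ref{corollary}: family (1) is Corollary~\ref{corollary}(1); families (2) and (3) are Corollary~\ref{corollary}(2), after interchanging the two primes in case (3) so that the prime carrying exponent $m$ is the larger; families (4) and (5) are Corollary~\ref{corollary}(3), again interchanging primes in case (5). (Corollary~\ref{corollary} in turn rests on Theorems~\ref{mainthm} and~\ref{GaoWang}.) For the numerical constraint I would simply evaluate the terms of the inequality in Theorem~\ref{mainthm2}: one uses $\frac1p K_1^*(C_{p^m})=\frac1p\cdot\frac{p^m-1}{p^m-p^{m-1}}=\frac{p^m-1}{p^{m+1}-p^m}$ and $\frac1p K_1^*(C_p)=\frac1p$ to compute $\frac1{p_1}K_1^*(\bigoplus_j C_{p_1^{e_{1j}}})$; notes that the contribution of a factor $C_{q^e}$ with $q\le cp=cp_1$ to the middle sum is literally $\frac{(cp)^e-1}{(cp)^{e+1}-(cp)^e}$, which equals $\frac1{cp}$ when $e=1$ and thus accounts for the $\frac2{cp}$ in family (4); and records that inside the logarithm the exponent of $p$ is $\sum_{i,j}e_{ij}$ over all factors of $G$ while the exponent of $c$ is $\sum_{i\ge 2,j}e_{ij}$ over the factors whose prime exceeds $p_1$. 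Carrying this out reproduces the five inequalities verbatim (reading $\log$ as $\log_2$ in (4)).

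With the hypotheses verified, Theorem~\ref{mainthm2} yields $K_1(C_r\oplus G)=K_1^*(C_r\oplus G)$ in each case, and when the constraint on $p$ is a strict inequality it also yields the asserted decomposition $S=S_r\sqcup S_G$ of any optimal UFIM. I expect no serious obstacle; the one thing to watch is the $n=2$ bookkeeping — pinning down which prime is $p_1$ after the Chinese Remainder step and after whatever interchange of primes lands us in Corollary~\ref{corollary}, and using $q\le cp$ consistently so that the constraint becomes a condition on $p$ alone — together with confirming that the auxiliary hypothesis $k(C_r\oplus G)=k^*(C_r\oplus G)$ is in fact available for these small groups.
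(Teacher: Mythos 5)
Your proposal is correct and follows essentially the same route as the paper: verify the two hypotheses of Theorem~\ref{mainthm2} and specialize its numerical constraint. The one input you leave as ``to confirm,'' namely $k(C_r\oplus G)=k^*(C_r\oplus G)$, is exactly what the paper supplies by observing that each $C_r\oplus G$ falls under Theorem~\ref{Kbound}, whence $K(C_r\oplus G)=K^*(C_r\oplus G)$ and therefore $k=k^*$ by Remark~\ref{kremark}.
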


This paper is organized as follows. In Section \ref{invariants}, we give a brief survey of other zero-sum group invariants. We will utilize these invariants in the methods used to prove our main results in Sections \ref{mainresult} and \ref{2pq}. In Section \ref{outline}, we give a brief outline of our general method for proving $K_1(G) = K_1^*(G)$ and bounding $K_1(G)$. In Section \ref{lemmas}, we prove several fundamental lemmas which will be used throughout our paper. In Section \ref{mainresult}, we prove the first main results of this paper, Theorem \ref{mainthm} and Corollary \ref{corollary}. In Section \ref{structuralresults}, we study some properties of general UFIMs and derive some key results which will be used in the proof of Theorem \ref{mainthm2}.  In Section \ref{2pq} we prove our second main result, Theorem \ref{mainthm2} and Corollary \ref{secondmaincorollary},  calculating $K_1(G)$ for certain subsets of the families $C_r\oplus C_{p^m} \oplus C_p,\; C_{rp^mq}$ and $C_r \oplus C_p^2 \oplus C_q$, showing that Conjecture \ref{K1conjecture} ``eventually" holds for members of this subsets.

 In Section \ref{secbound}, we study the asymptotic behavior of $K_1(G)$, in particular showing that it behaves essentially like $k(G)$ and $K_1^*(G)$, and that it becomes arbitrarily close to these quantities in a certain limit. This gives new information on the  behavior of $K_1(G)$. We also give an even sharper bound on $K_1(G) - K_1^*(G)$ in the case of certain classes of finite abelian groups, including finite abelian $p$-groups $G$. 

\section{A Brief Survey of Related Group Invariants} \label{invariants}
Group invariants such as the cross number have proven useful in the study of factorization problems in Krull domains (see \cite{Chapman}), and in the study of block monoids (see \cite{KrauseZahlten}). In this section, we recall other invariants related to zero-sum indexed multisets over finite abelian groups. We include this brief survey of known results both to serve as a reference for the reader and because these quantities will appear in the methods we use to study $K_1(G)$ throughout the rest of the paper. A reader already familiar with the material below may safely skip this section.

The following invariants quantify the maximal length of certain types of zero-sum indexed multisets over $G\setminus\{0\}$:
$$D(G) := \max\{|S| : S \;\text{is a minimal zero-sum indexed multiset over} \;G\setminus\{0\}\}$$
$$N_1(G) := \max\{|S| : S \;\text{is a UFIM over} \;G\setminus\{0\}\}.$$
We refer to $D(G)$ as the \emph{Davenport constant} of $G$ and $N_1(G)$ as the \emph{first Narkiewicz constant} (or simply the \emph{Narkiewicz constant} of $G$), introduced by Narkiewicz in \cite{Narkiewicz}. Similarly to $K_1(G)$, the Narkiewicz constant $N_1(G)$ has a conjectured explicit formula.

\begin{conjecture}[Narkiewicz \cite{NarkiewiczSliwa}] \label{N1conjecture} For a given abelian group $G$, write it as a sum of invariant factors: $G = \bigoplus_{i=1}^r C_{n_i}$ where $n_i|n_j$ if $i\le j,\; n_1>1$. Then 
$$N_1(G) = \sum_{i=1}^r n_i.$$
\end{conjecture}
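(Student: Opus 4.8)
The plan is to prove the two inequalities $N_1(G)\le\sum_{i=1}^r n_i$ and $N_1(G)\ge\sum_{i=1}^r n_i$ separately; the second is routine and the first carries all the weight (and in full generality is open, so what follows is really a program, whose hardest ingredient I flag below). For the lower bound, fix generators $e_i$ of the invariant summands $C_{n_i}$ and form, in the notation of Remark~\ref{structureremark}, the indexed multiset
\[
S \;=\; \bigsqcup_{i=1}^{r} S_{e_i}^{\,n_i},
\]
in which each $e_i$ (nonzero since $n_i>1$) occurs exactly $n_i$ times. Each block $S_{e_i}^{\,n_i}$ is minimal zero-sum because $\ord(e_i)=n_i$, and any zero-sum submultiset of $S$ must in the $i$-th coordinate use a multiple of $n_i$ copies of $e_i$, hence all or none of them; so $\bigsqcup_i S_{e_i}^{\,n_i}$ is the only irreducible factorization of $S$, $S$ is a UFIM, and $|S|=\sum n_i$. (That $N_1(G)$ is even finite --- needed for the statement to make sense --- follows from a short pigeonhole argument bounding repetitions and repeated reducible configurations, or may be quoted from \cite{NarkiewiczSliwa}.)

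For the upper bound the basic tool is an \emph{exchange obstruction} for UFIMs, which I would isolate as a lemma: if $S=\bigsqcup_{i=1}^m I_i$ is the unique factorization of a UFIM and $i\ne j$, then there is no pair of nonempty proper submultisets $A\subsetneq S(I_i)$, $B\subsetneq S(I_j)$ with $\sigma(A)=\sigma(B)$ --- otherwise $(S(I_i)\setminus A)\sqcup B$ and $(S(I_j)\setminus B)\sqcup A$ are both zero-sum, and refining each into minimal zero-sum pieces yields an irreducible factorization of $S$ inequivalent to the original (it contains neither $I_i$ nor $I_j$). An immediate consequence is that the sets $\Sigma^{*}(S(I_i)):=\{\sigma(A): \emptyset\ne A\subsetneq S(I_i)\}$ are pairwise disjoint subsets of $G\setminus\{0\}$, each of size $\ge|S(I_i)|-1$ (delete one element of the factor to obtain a zero-sum free sequence, which has at least that many distinct nonempty subset sums), so $\sum_i(|S(I_i)|-1)\le|G|-1$; but this bound is far too weak, of order $|G|=\prod n_i$ rather than $\sum n_i$, and only serves to make the set of UFIMs manifestly finite and to start the bookkeeping.

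The real plan for the upper bound is induction on the number $r$ of invariant factors, with the cyclic case $N_1(C_n)=n$ (checkable directly) as base. Writing $G=C_{n_1}\oplus H$ with $H=\bigoplus_{i=2}^r C_{n_i}$ already in invariant-factor form, the inductive step would follow from a subadditivity lemma: $N_1(C_{n_1}\oplus H)\le n_1+N_1(H)$ \emph{when $n_1$ is the smallest invariant factor of the ambient group} --- the hypothesis is essential, since e.g.\ $N_1(C_p\oplus C_q)=pq$, but in this induction one never splits $C_{pq}$, whose invariant-factor form is a single cyclic group. To prove such a lemma one would take a UFIM $S$ over $C_{n_1}\oplus H$, project it to $H$ and to $C_{n_1}$, and try to extract from the images two UFIMs of controlled total length. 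The difficulty --- and this is the step I expect to be the main obstacle --- is that neither projection of a UFIM is obviously a UFIM: irreducible factors can collapse or merge under a projection, and elements can project to $0$. One must therefore track the image of each irreducible factor and invoke the exchange obstruction, together with the known structure of minimal zero-sum sequences (e.g.\ over elementary abelian or cyclic $p$-groups), to exclude exactly the configurations that would let $S$ exceed length $n_1+N_1(H)$. I would expect this to succeed for bounded rank ($r\le2$) and for $p$-groups via finer but still combinatorial arguments --- indeed these are the cases already known --- while for rank $\ge3$ in general it runs into the same lack of structural control that keeps the exact Davenport constant $D(G)$ open there, so a complete proof for arbitrary $G$ lies beyond this program.
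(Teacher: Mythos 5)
You are being asked to ``prove'' a statement that the paper itself presents only as an open conjecture of Narkiewicz (Conjecture \ref{N1conjecture}): the paper offers no proof, explicitly remarks that a resolution still seems far away, and merely records the special cases in which the formula has been verified, namely $C_n$, $C_2^m$, $C_3^m$ and $C_p^2$ (Theorem \ref{N1}). You correctly recognize this. Your lower bound $N_1(G)\ge\sum_{i=1}^r n_i$ is right and is essentially the standard construction: with $e_i$ a generator of the $i$-th invariant summand, any zero-sum submultiset of $\bigsqcup_i S_{e_i}^{\,n_i}$ must use a multiple of $n_i$ copies of $e_i$ in each coordinate, hence all or none, so the block decomposition is the unique irreducible factorization. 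Your ``exchange obstruction'' is also a correct and genuinely useful lemma, close in spirit to the intersection characterization of unique factorization recorded as Proposition \ref{equivchar}, and your derivation of $\sum_i(|S(I_i)|-1)\le|G|-1$ from it (via the fact that a zero-sum free multiset of length $\ell$ has at least $\ell$ distinct nonzero subset sums) is sound, though, as you say, of the wrong order of magnitude.

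The gap is exactly where you flag it: the upper bound $N_1(G)\le\sum_{i=1}^r n_i$ is not proved, and the proposed induction does not close. The subadditivity statement $N_1(C_{n_1}\oplus H)\le n_1+N_1(H)$ for $n_1$ the smallest invariant factor is itself unknown in general, and the obstruction you identify --- that projections of UFIMs need not be UFIMs --- is real; the paper gives the explicit example $S=\{(1,1),(2,2),(1,2),(2,1)\}$ over $C_3^2$, whose projection to the first coordinate admits two inequivalent factorizations. Even your proposed base case $N_1(C_n)=n$ is not ``checkable directly'' in any short way (it is a theorem of Gao / Narkiewicz--\'Sliwa), and the rank-two case $C_p^2$ was settled only recently by Gao, Li and Peng \cite{GaoLiPeng}. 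So what you have is an honest program with a correct half, not a proof; there is no proof in the paper to compare it against, and where the paper uses $N_1$ (e.g.\ in the proof of Theorem \ref{mainthm}) it relies only on the verified cases of Theorem \ref{N1} together with the unconditional identity $N_1(C_p^n)=pK_1(C_p^n)$, never on the conjecture in full generality.
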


A resolution of Conjecture \ref{N1conjecture} still seems far away, but it has been verified for the following special cases.

\begin{theorem}[\cite{Gao}, \cite{GaoLiPeng}, \cite{NarkiewiczSliwa}] \label{N1} Conjecture \ref{N1conjecture} holds for:
\begin{enumerate}
\item $C_n$ where $n\in \mathbb{N}$;
\item $C_2^m$ where $m\in\mathbb{N}$;
\item $C_3^m$ where $m\in\mathbb{N}$;
\item $C_p^2$ where $p$ is prime.
\end{enumerate}
\end{theorem}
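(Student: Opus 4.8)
The plan is to prove Conjecture \ref{N1conjecture} for these special cases by separately handling the cyclic case and the elementary abelian cases, since they require genuinely different techniques. For the cyclic group $C_n$, the plan is to show $N_1(C_n) = n$ by observing that $D(C_n) = n$ (a classical fact), and that any UFIM over $C_n\setminus\{0\}$ must actually be a single minimal zero-sum set: if a UFIM $S$ had an irreducible factorization into $m\ge 2$ factors, I would argue that one can find a second, inequivalent factorization, contradicting uniqueness. The key idea here is that in a cyclic group, given two minimal zero-sum submultisets $S(I_1)$ and $S(I_2)$, one can ``rotate'' elements between them — concretely, using the fact that partial sums of $S(I_1)$ hit every residue and so do those of $S(I_2)$, one can find proper nonempty subsets whose sums agree and swap them to produce a different factorization. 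So $N_1(C_n) = D(C_n) = n$, matching the formula since $C_n$ has a single invariant factor $n$.

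For the elementary abelian cases $C_2^m$, $C_3^m$, and $C_p^2$, the plan is to use the known values of the Davenport constant ($D(C_p^2) = 2p-1$, $D(C_2^m) = m+1$, $D(C_3^m) = 2m+1$) together with structural results on UFIMs and on the sets achieving the Davenport constant. For $C_2^m$: the invariant factor decomposition is $C_2^m$, so the conjectured value is $2m$; here I would argue that a UFIM over $C_2^m\setminus\{0\}$ can have at most two irreducible factors when it is large, since in characteristic 2 a minimal zero-sum set is a set of linearly independent vectors together with their sum, and two such configurations sharing structure can be recombined unless they are ``aligned'' — bounding the total length by $2m$. The cases $C_3^m$ and $C_p^2$ would proceed similarly, invoking the classification of long zero-sum free sequences over these groups (for $C_3^m$, zero-sum free sequences of length $2m$ have a known restricted structure; for $C_p^2$, one uses the structure theorem for zero-sum free sequences of length $2p-2$) to limit how irreducible factors can be combined, and then checking that the extremal UFIM is a disjoint union matching the invariant-factor bound $\sum n_i$.

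The main obstacle I expect is the elementary abelian case, specifically ruling out UFIMs that are ``barely'' non-unique: one has to show that whenever the cross-number-free length is pushed above $\sum n_i$, there is genuine freedom to re-factor. This hinges on the fact that for these particular groups the long minimal zero-sum (equivalently long zero-sum free) sequences are very rigid, so the argument really does depend on having the classification theorems available only for $C_2^m$, $C_3^m$, and $C_p^2$ — which is exactly why the theorem is stated only for these families. A secondary subtlety is making precise the recombination/swapping move so that it genuinely produces an \emph{inequivalent} factorization in the sense defined in the introduction (same multiset of index-blocks counts as equivalent), which requires tracking indices carefully rather than just the underlying group elements.
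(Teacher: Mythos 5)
The paper does not prove this theorem at all: it is quoted from the literature (Narkiewicz--\'{S}liwa for cyclic groups, Gao for $C_2^m$ and $C_3^m$, Gao--Li--Peng for $C_p^2$), so there is no internal proof to compare yours against and your sketch must stand on its own. It does not. The central claim of your cyclic case --- that a UFIM over $C_n\setminus\{0\}$ must be a single minimal zero-sum multiset, so that $N_1(C_n)=D(C_n)$ --- is false. Over $C_6$ the multiset $\{3,3\}\sqcup\{2,2,2\}$ is a UFIM with two irreducible factors, and your proposed ``swap'' move fails on it: every proper nonempty submultiset of $\{3,3\}$ has sum $3$, while every proper nonempty submultiset of $\{2,2,2\}$ has sum $2$ or $4$, so there is nothing to exchange. (The paper's own Remark \ref{structureremark} exhibits UFIMs over $C_{p^m}$ whose cross number exceeds $K(C_{p^m})=1$, so they cannot be single minimal zero-sum multisets.) The correct statement is only that the \emph{total length} of a UFIM over $C_n$ is at most $n$; proving that bound for arbitrary multi-factor UFIMs is the actual content of the cited results, and your argument supplies no mechanism for it.

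The elementary abelian sketch has the same defect. The assertion that a large UFIM over $C_2^m$ has at most two irreducible factors is contradicted by $\bigsqcup_{i=1}^m \{e_i,e_i\}$ for a basis $e_1,\ldots,e_m$ of $C_2^m$: a submultiset is zero-sum exactly when it is a union of the pairs $\{e_i,e_i\}$, so by Proposition \ref{equivchar} this is a UFIM of length $2m$ with $m$ irreducible factors --- indeed it is the extremal example. More generally, your proposed reduction (Davenport constant plus rigidity of long zero-sum free sequences) does not address UFIMs built from many short factors, which is where the difficulty of bounding $N_1$ lies; the $C_p^2$ case in particular is a substantial recent theorem of Gao, Li and Peng, as the paper itself emphasizes in its conclusion. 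This statement should simply be cited, as the paper does, rather than re-derived.
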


The Davenport constant $D(G)$ has a similar associated formula 
$$D^*(G) = D^*\left(\bigoplus_{i=1}^rC_{n_i}\right) := 1 + \sum_{j=1}^{r} (n_i -1).$$
$D(G)$ and $D^*(G)$ are known to be equal for groups of rank at most 2, but have been shown to differ in certain groups of rank at least 4; they are conjectured to be equal for groups of rank 3 (see \cite{GeroldingerLiebmannPhilipp}). 

We also have an invariant similar to $K_1(G)$, by instead taking the maximal cross number over minimal zero-sum indexed multisets:
$$K(G) := \max\{k(S) : S \;\text{is a minimal zero-sum indexed multiset over} \;G\setminus\{0\}\}.$$
The invariant $K(G)$, often simply called the \emph{cross number} of $G$, was introduced by Krause in \cite{Krause} (for further information, see \cite{GaoGeroldinger}, \cite{GeroldingerHalter-Koch}, \cite{Geroldinger}, \cite{GeroldingerSchneider}, \cite{GeroldingerSchneider2}, and \cite{Girard}). Like $D(G)$ and $N_1(G)$, $K(G)$ has only been fully computed for some families of finite abelian groups, including $p$-groups. We have the following conjecture.

\begin{conjecture}[Krause-Zahlten \cite{KrauseZahlten}] \label{Kconjecture} For any finite abelian group $\bigoplus_{i=1}^r\bigoplus_{j=1}^{t_i} C_{p_i^{e_{ij}}}$, we have 
$$K(G) = K^*(G) := \frac{1}{\Exp(G)} + \sum_{i=1}^r\sum_{j=1}^{t_i} \frac{p_i^{e_{ij}}-1}{p_i^{e_{ij}}}.$$
\end{conjecture}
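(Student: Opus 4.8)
The plan is to separate $K(G)\ge K^*(G)$ from $K(G)\le K^*(G)$. The lower bound is routine: with $G = \bigoplus_{i=1}^r\bigoplus_{j=1}^{t_i} C_{p_i^{e_{ij}}}$ and $g_{ij}$ a generator of the $(i,j)$-th cyclic factor, the indexed multiset
$$S = \left(\bigsqcup_{i,j} S_{g_{ij}}^{\,p_i^{e_{ij}}-1}\right) \sqcup \left\{\,\textstyle\sum_{i,j} g_{ij}\,\right\}$$
(notation of Remark \ref{structureremark}) is minimal zero-sum --- the correction element $\sum_{i,j}g_{ij}$ has order exactly $\Exp(G) = \mathrm{lcm}_{i,j}\,p_i^{e_{ij}}$, and no smaller subsum vanishes because the partial multiplicities stay strictly below the orders --- with $k(S) = \frac{1}{\Exp(G)} + \sum_{i,j}\frac{p_i^{e_{ij}}-1}{p_i^{e_{ij}}} = K^*(G)$. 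So all the content lies in the reverse inequality.

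For the upper bound I would induct on $|G|$ via quotients. Given a minimal zero-sum indexed multiset $S$ over $G\setminus\{0\}$, fix a subgroup $H\le G$ of prime order $p$ and look at the image $\bar S$ in $G/H$: it is zero-sum, and after deleting its zero entries it decomposes into minimal zero-sum pieces $\bar S = \bar T_1 \sqcup\cdots\sqcup \bar T_k$ over $G/H$. The elements of $S$ lying in $H$, together with the ``defects'' $\sigma(T_\ell)\in H$ of the corresponding $T_\ell\subseteq S$, organize into zero-sum sequences over $H\cong C_p$ whose cross numbers are bounded using $K(C_p) = 1$, while $k(T_\ell)\le k(\bar T_\ell)$ (orders can only shrink under $G\to G/H$) combines with the inductive bound $k(\bar T_\ell)\le K^*(G/H)$ to govern the pieces. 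Turning these ingredients into a sharp $k(S)\le K^*(G)$ is where the argument gets delicate: passing to $G/H$ lowers the exponent by a factor of $p$ or not at all, so $K^*(G) - K^*(G/H)$ is not simply the cross-number ``cost'' of the $H$-layer --- there is an exponent-defect correction, the same phenomenon responsible for the $-1$ in Theorem \ref{mainthm}(\ref{1}).

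This strategy goes through for $p$-groups: iterating the reduction --- a $p$-group always contains a subgroup of order $p$, and $K(C_p) = 1$ --- down to the base case $K(C_{p^n}) = 1$ gives $K(G) = K^*(G)$, which is the Krause-Zahlten theorem for $p$-groups. For general $G$ I would then seek a coprime-additivity reduction in the spirit of Theorem \ref{mainthm}(\ref{2}): writing $G = G_p\oplus G'$ with $G_p$ the $p$-primary component and $|G'|$ coprime to $p$, prove
$$K(G) = K(G_p) + K(G') - \frac{\Exp(G_p) + \Exp(G') - 1}{\Exp(G_p)\,\Exp(G')},$$
the correction term being forced by $\Exp(G) = \Exp(G_p)\Exp(G')$; iterating over the prime divisors of $|G|$ and invoking the $p$-group case would complete the proof.

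The main obstacle is this general upper bound --- understanding how the cross number interacts with the quotient map $G\to G/H$. A crude inequality of the form $K(G)\le K(H) + K(G/H)$ is far from sharp, and even where it is nearly sharp it misses by exactly the exponent-defect term above; pinning down the exact value therefore demands a structural description of the minimal zero-sum sequences that are extremal for $k$ --- the analogue for $K$ of the structural results this paper develops for $K_1$ in Section \ref{structuralresults}. Without such control the coprime-additivity step, and hence the mixed case, remains out of reach, which is why Conjecture \ref{Kconjecture} is currently known only for $p$-groups.
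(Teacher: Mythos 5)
This statement is not something the paper proves: Conjecture \ref{Kconjecture} is an open conjecture of Krause and Zahlten, quoted from \cite{KrauseZahlten}, and the paper only records in Theorem \ref{Kbound} the families for which it has been verified by other authors. So there is no proof in the paper to compare against, and your proposal, as you yourself concede in your final paragraph, is not a proof either. The parts you do nail down are fine: your lower-bound multiset $\bigsqcup_{i,j} S_{g_{ij}}^{\,p_i^{e_{ij}}-1}\sqcup\{\sum_{i,j}g_{ij}\}$ is genuinely minimal zero-sum (a vanishing subsum forces each $g_{ij}$-coefficient to be $0$ or $p_i^{e_{ij}}$, which only the empty and full subsets achieve) and has cross number exactly $K^*(G)$; the $p$-group case you sketch is Geroldinger's theorem \cite{Geroldinger}, cited as Theorem \ref{Kbound}(1); and your coprime-additivity identity is at least arithmetically consistent with the conjectured formula, since $K^*(G_p\oplus G')=K^*(G_p)+K^*(G')-\frac{\Exp(G_p)+\Exp(G')-1}{\Exp(G_p)\Exp(G')}$ when $\gcd(|G_p|,|G'|)=1$.

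The genuine gap is the entire upper bound $K(G)\le K^*(G)$ in the mixed-prime case, and your own diagnosis of why the quotient argument stalls is accurate: the ``defect'' pieces $T_\ell$ with $\sigma(T_\ell)\in H\setminus\{0\}$ contribute cross number that the naive bound $K(G)\le K(H)+K(G/H)$ cannot absorb, and the exponent drop under $G\to G/H$ is not uniform across elements. Proposing the coprime-additivity formula as the missing step does not close this gap; it merely restates the conjecture in an equivalent form, since no additivity (even approximate) of $K$ over coprime direct summands is known. One factual correction: the conjecture is \emph{not} known only for $p$-groups --- Theorem \ref{Kbound} lists $C_{p^m}\oplus C_{p^n}\oplus C_q^s$, $C_{p^mq}$, $C_{p^2q^2}$, $C_{pqr}$, and certain groups $\bigoplus_{i=1}^n C_{p_i^{e_i}}\oplus C_q^s$ --- but each of those verifications is an ad hoc argument rather than an instance of a general reduction, which is precisely why the conjecture remains open.
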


Conjecture \ref{Kconjecture} has been verified for some families, given by the following Theorem.

\begin{theorem}
\label{Kbound}
Conjecture \ref{Kconjecture} holds for the following families of abelian groups $G$:
\begin{enumerate}
\item (See \cite{Geroldinger}) Finite abelian p-groups for any prime $p$. 
\item (See \cite{GeroldingerSchneider}) Groups of the form $C_{p^m}\oplus C_{p^n} \oplus C_q^s$ for distinct primes $p,q$ and $m,n,s\in\mathbb{N}$.
\item (See \cite{GeroldingerSchneider}) Groups of the form $\bigoplus_{i=1}^n C_{p_i^{e_i}} \oplus C_q^s$ where $p_1,\ldots,p_n,q$ are distinct primes, $m,n\in\mathbb{N}$, $s\in\mathbb{N}\cup \{0\}$, and one of the following conditions holds:
\begin{enumerate}
\item $n\le 3$ and $p_1\cdots p_n \neq 30$.
\item $p_k\ge k^3$ for every $1\le k\le n$. 
\end{enumerate}
\item  (See \cite{KrauseZahlten}) Cyclic groups of the form $G = C_{p^mq}$ where $p,q$ are distinct primes and $m\in \mathbb{N}$.
\item (See \cite{KrauseZahlten}) Cyclic groups of the form $G = C_{p^2q^2}$ where $p,q$ are distinct primes.
\item (See \cite{KrauseZahlten}) Cyclic groups of the form $G = C_{pqr}$ where $p,q,r$ are distinct primes.
\end{enumerate}
\end{theorem}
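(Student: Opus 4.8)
The statement collects results from \cite{Geroldinger}, \cite{GeroldingerSchneider} and \cite{KrauseZahlten}, so the plan is to treat the families in turn around a common template. In every case the lower bound $K(G) \ge K^*(G)$ is the easy half and can be exhibited uniformly by an explicit minimal zero-sum indexed multiset, exactly parallel to the construction of Remark \ref{structureremark}. Concretely, fix generators $\gamma_{ij}$ of the cyclic factors $C_{p_i^{e_{ij}}}$; for each factor take $p_i-1$ copies of each of $\gamma_{ij}, p_i\gamma_{ij}, \ldots, p_i^{e_{ij}-1}\gamma_{ij}$, whose partial sum within that factor is $(p_i^{e_{ij}}-1)\gamma_{ij} = -\gamma_{ij}$ and whose cross number is $(p_i^{e_{ij}}-1)/p_i^{e_{ij}}$, and then append the single element $\sum_{i,j}\gamma_{ij}$, which has order $\Exp(G)$ and contributes $1/\Exp(G)$. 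One checks this is minimal zero-sum with cross number exactly $\sum_{i,j}(p_i^{e_{ij}}-1)/p_i^{e_{ij}} + 1/\Exp(G) = K^*(G)$. Thus for each family the entire content is the reverse inequality $K(G) \le K^*(G)$, i.e. that no minimal zero-sum indexed multiset can beat this model.

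For the cornerstone case of a finite abelian $p$-group (item 1, \cite{Geroldinger}) I would prove $K(G) \le K^*(G)$ by induction on $|G|$ using the filtration $G \supseteq pG \supseteq p^2G \supseteq \cdots$. Given a minimal zero-sum indexed multiset $S$, if no element of $S$ has maximal order $\Exp(G)=p^e$, then $S$ is supported in the proper subgroup $G[p^{e-1}]$ of elements killed by $p^{e-1}$, and the inductive hypothesis applies directly, using that $K^*$ does not increase when the top layer is truncated (a short check on the formula). Otherwise one isolates the subsequence of elements of order $p^e$ and uses minimality to bound its size and its interaction with the lower-order part, passing to the quotient $G/pG$, an $\mathbb{F}_p$-vector space in which the relevant Davenport and cross-number data are rigid, in order to descend. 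The bookkeeping is arranged so that the single nonadditive term $1/\Exp(G)$ is charged to at most one top element, while every remaining element is charged against the genuinely additive part $\sum_{ij}(p_i^{e_{ij}}-1)/p_i^{e_{ij}}$.

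For the mixed and cyclic families (items 2--6, \cite{GeroldingerSchneider} and \cite{KrauseZahlten}) the strategy is to work with the primary decomposition $G = \bigoplus_i G_{p_i}$ and to project a minimal zero-sum indexed multiset $S$ onto each primary component. The key difficulty, and the reason these results are harder than item 1, is that minimal zero-sumness is \emph{not} multiplicative across the primary decomposition: the projections of $S$ need be neither zero-sum nor minimal, so one cannot simply sum the per-prime bounds. One instead tracks, for each prime, how the projected indexed multiset factors, and how the factorizations at different primes must be synchronized by the global minimality of $S$, and then optimizes the total cross number over all admissible synchronizations. The side conditions in the statement ($n\le 3$ with $p_1\cdots p_n \ne 30$, or $p_k \ge k^3$, and the rank-$\le 2$ restriction on the $p$-part in item 2) are precisely the ranges in which this combinatorial optimization can be pushed to equality, the $p$-part being controlled inductively by item 1 and the factor $C_q^s$ by exact knowledge of its minimal zero-sum sequences via the Davenport constant.

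The main obstacle, in short, is the upper bound in the multi-prime setting: establishing $K(G) \le K^*(G)$ requires controlling the joint combinatorics of the primary projections of a single minimal zero-sum indexed multiset, and it is exactly this control that forces the technical hypotheses in items 2 and 3 and that is unavailable in general, which is why Conjecture \ref{Kconjecture} remains open outside these families.
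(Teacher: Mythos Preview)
The paper does not prove this theorem; it appears in the survey Section \ref{invariants} as a compilation of known results, each item attributed by citation to its original source, with no argument given. There is thus no ``paper's own proof'' to compare against.

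Your proposal acknowledges this and offers instead a high-level outline of how the cited arguments go. The lower-bound construction you describe is correct and standard. Your sketches of the upper bounds capture the broad shape of the literature --- induction along the $p$-filtration for $p$-groups, and control of the primary projections for the mixed families --- but remain too schematic to constitute proofs. For item (1), Geroldinger's actual argument is more delicate than ``isolate the top-order elements and pass to $G/pG$'': one must track the multiplicities at each order level and run a careful counting/weighting argument; your phrase ``the bookkeeping is arranged so that\ldots'' hides exactly the work. For items (2)--(6), the ``combinatorial optimization over admissible synchronizations'' is an accurate description of the difficulty but not of its resolution, which in \cite{GeroldingerSchneider} and \cite{KrauseZahlten} proceeds through substantial case analysis that your outline does not attempt.

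That said, since the paper itself simply cites these results, a full proof is not expected here, and your summary is a reasonable orientation to what the references contain. If the intent is merely to record the theorem as background, a one-line ``See the cited references'' would match the paper; if the intent is an actual proof, each item would need to be expanded into a self-contained argument well beyond what you have sketched.
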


We can also define the \emph{little cross number} of $G$:
$$k(G) := \max\{k(S) : S \;\text{is a zero-sum free indexed multiset over}\; G\setminus\{0\}\}.$$
\begin{remark}Note that any zero-sum free indexed multiset differs by one element from some minimal zero-sum indexed multiset: If $S$ is zero-sum free, then $S \sqcup \{-\sigma(S)\}$ is minimal zero-sum. In particular, for any zero-sum free $S$, we have $k(S) + \frac{1}{\Exp(G)} \le k(S\sqcup \{-\sigma(S)\})$, and so we have the following Proposition.
\end{remark}
\begin{proposition}\label{zero-sumfree} For any finite abelian group $G = \bigoplus_{i=1}^r\bigoplus_{j=1}^{t_i} C_{p_i^{e_{ij}}}$, we have 
$$k(G) + \frac{1}{\Exp(G)} \le K(G).$$
\end{proposition}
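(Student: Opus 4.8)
The plan is to turn the observation in the preceding Remark into a rigorous argument by exhibiting, from an extremal zero-sum free indexed multiset, a minimal zero-sum indexed multiset whose cross number exceeds it by at least $\frac{1}{\Exp(G)}$. If $G$ is trivial there is nothing to prove, so assume $G\neq\{0\}$. First I would choose a zero-sum free indexed multiset $S$ over $G\setminus\{0\}$ with $k(S) = k(G)$ (such an $S$ exists since the set of cross numbers of zero-sum free multisets is bounded and attained, and in the worst case $S=\emptyset$ already gives $k(\emptyset)+\frac{1}{\Exp(G)}=\frac{1}{\Exp(G)}\le K(G)$). Because $S$ is zero-sum free, taking $S'=S$ in the definition forces $\sigma(S)\neq 0$, so $g_0 := -\sigma(S)$ lies in $G\setminus\{0\}$. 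I then form $S^{+} := S\sqcup\{g_0\}$, an indexed multiset over $G\setminus\{0\}$, using a fresh index for the adjoined element $g_0$.

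The key step is to check that $S^{+}$ is \emph{minimal} zero-sum, not merely zero-sum. That $\sigma(S^{+}) = \sigma(S)+g_0 = 0$ is immediate. For minimality, suppose $\emptyset\subsetneq S'\subsetneq S^{+}$ with $\sigma(S')=0$. If $g_0\notin S'$, then $S'\subseteq S$ is a nonempty proper subset with zero sum, contradicting zero-sum freeness of $S$. If $g_0\in S'$, write $S' = S''\sqcup\{g_0\}$ with $S''\subseteq S$; since $S'$ is proper in $S^{+}$, the part $S''$ is a proper subset of $S$, and $\sigma(S'')=-g_0=\sigma(S)$, hence $\sigma(S\setminus S'')=0$. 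Here $S\setminus S''$ is nonempty (as $S''\subsetneq S$), and it equals $S$ only if $S''=\emptyset$, which would give $\sigma(S)=0$, again impossible; so $\emptyset\subsetneq S\setminus S''\subsetneq S$ has zero sum, once more contradicting zero-sum freeness. Thus $S^{+}$ is minimal zero-sum.

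Finally, since $g_0\in G\setminus\{0\}$ we have $\ord(g_0)\le\Exp(G)$, so $\frac{1}{\ord(g_0)}\ge\frac{1}{\Exp(G)}$, and therefore
$$K(G) \ge k(S^{+}) = k(S) + \frac{1}{\ord(g_0)} \ge k(G) + \frac{1}{\Exp(G)},$$
which is exactly the claim. I do not anticipate any genuine obstacle; the only point that needs a little care is precisely the minimality of $S^{+}$ (as opposed to it merely being zero-sum), handled by the short case analysis above, and the trivial edge case $S=\emptyset$ / $G=\{0\}$.
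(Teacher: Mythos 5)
Your proof is correct and follows exactly the route the paper takes (in the remark immediately preceding the proposition): adjoin $-\sigma(S)$ to an extremal zero-sum free multiset $S$ to obtain a minimal zero-sum multiset of cross number at least $k(G)+\frac{1}{\Exp(G)}$. You additionally verify the minimality of $S\sqcup\{-\sigma(S)\}$ and the trivial edge cases, which the paper asserts without detail.
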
 
We again have a conjectured explicit formula for $k(G)$.
\begin{conjecture}\label{kconjecture} For any finite abelian group $G = \bigoplus_{i=1}^r\bigoplus_{j=1}^{t_i} C_{p_i^{e_{ij}}}$ written as a direct sum of prime-power cyclic groups, we have
$$k(G) = k^*(G) :=  \sum_{i=1}^r\sum_{j=1}^{t_i} \frac{p_i^{e_{ij}}-1}{p_i^{e_{ij}}}.$$
\end{conjecture}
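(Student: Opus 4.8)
The plan is to deduce the conjectured value of $k(G)$ from what is already known about the ordinary cross number $K(G)$, using the numerical identity $K^*(G) = \tfrac{1}{\Exp(G)} + k^*(G)$ that is apparent from the two defining formulas. I would split the problem into the two inequalities $k^*(G) \le k(G)$ and $k(G) \le k^*(G)$: the first unconditionally, the second under the assumption that Conjecture~\ref{Kconjecture} holds for the group $G$ in question. This would prove Conjecture~\ref{kconjecture} for every family appearing in Theorem~\ref{Kbound} --- in particular for all finite abelian $p$-groups, for $C_{p^m}\oplus C_{p^n}\oplus C_q^s$, and for the cyclic groups $C_{p^mq}$, $C_{p^2q^2}$ and $C_{pqr}$.

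For the lower bound I would exhibit an explicit zero-sum free indexed multiset of cross number $k^*(G)$, mimicking the first half of the construction in Remark~\ref{structureremark}. Writing $G = \bigoplus_{i=1}^r\bigoplus_{j=1}^{t_i} C_{p_i^{e_{ij}}}$ and fixing a generator $\gamma_{ij}$ of the summand $C_{p_i^{e_{ij}}}$, I would take inside that summand the multiset $S_{ij}$ consisting of $p_i-1$ copies of each element $p_i^{k-1}\gamma_{ij}$, $k=1,\dots,e_{ij}$. Each $S_{ij}$ is zero-sum free: a nonempty subsum equals $\bigl(\sum_k a_k p_i^{k-1}\bigr)\gamma_{ij}$ with $0\le a_k\le p_i-1$ not all zero, so the coefficient lies in $\{1,\dots,p_i^{e_{ij}}-1\}$ and the element is nonzero; and a short computation gives $k(S_{ij}) = (p_i^{e_{ij}}-1)/p_i^{e_{ij}}$. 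Because distinct direct summands of $G$ meet only in $0$, a vanishing subsum of $S = \bigsqcup_{i,j} S_{ij}$ forces each summand-component to vanish, hence each intersection $S'\cap S_{ij}$ to be empty; so $S$ is zero-sum free, and $k(G) \ge k(S) = \sum_{i,j}(p_i^{e_{ij}}-1)/p_i^{e_{ij}} = k^*(G)$.

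For the upper bound I would invoke Proposition~\ref{zero-sumfree}, which gives $k(G) + \tfrac{1}{\Exp(G)} \le K(G)$. Under the hypothesis $K(G) = K^*(G)$, the identity $K^*(G) = \tfrac{1}{\Exp(G)} + k^*(G)$ immediately yields $k(G) \le k^*(G)$, and together with the lower bound this forces $k(G) = k^*(G)$. In other words, the little cross number conjecture is a formal consequence of the Krause--Zahlten conjecture (plus the elementary construction above), so it holds for exactly the groups where $K(G)$ has been computed.

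The hard part is that this reduction buys nothing beyond the known range of Conjecture~\ref{Kconjecture}, and it only runs in one direction: the inequality in Proposition~\ref{zero-sumfree} can be strict --- a minimal zero-sum multiset need not contain an element of order $\Exp(G)$ --- so $k(G) = k^*(G)$ could well survive in cases where $K(G) > K^*(G)$, and one cannot ``transfer back'' to gain new information on $K(G)$ either. A proof of Conjecture~\ref{kconjecture} for genuinely new groups would have to engage $k(G)$ directly, presumably through an induction on subgroups and quotients or a polynomial-method computation in the group algebra in the spirit of the arguments used for the Davenport constant, and producing such an argument is the real obstacle.
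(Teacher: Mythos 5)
Your proposal is correct and takes essentially the same route as the paper: the statement is a conjecture that remains open, and the paper's only content about it is the unconditional lower bound $k(G)\ge k^*(G)$ via an explicit zero-sum free construction (the remark following the conjecture) together with Remark \ref{kremark}, which derives $k(G)=k^*(G)$ from Proposition \ref{zero-sumfree} exactly when $K(G)=K^*(G)$ is known. Your added details (the explicit multisets $S_{ij}$, the verification of zero-sum freeness, and the observation that the reduction cannot reach groups beyond Theorem \ref{Kbound}) are all consistent with, and slightly more explicit than, what the paper records.
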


\begin{remark}Again, for any given abelian group $G$ one can construct a zero-sum free indexed multiset $S$ such that $k(S) = k^*(G)$, and hence we have $k(G) \ge k^*(G)$.
\end{remark}
\begin{remark} \label{kremark} Note that given a finite abelian group $G$ for which we have $K(G) = K^*(G)$, then Proposition \ref{zero-sumfree} along with $k(G) \ge k^*(G)$ implies that $k(G) = k^*(G)$. Hence Conjecture \ref{kconjecture} holds for the families of abelian groups given in Proposition \ref{Kbound}.
\end{remark}

We include the following table summarizing the main families of abelian groups for which $D(G), N_1(G), K(G), k(G)$ and $K_1(G)$ have been fully computed (including the results shown in this paper). The values of the invariants for the familes listed below are all consistent with their corresponding conjectures. 
\begin{center}
    \begin{tabular}{ | l | p{12cm}| }
    \hline
    Invariant & Fully computed for \\ \hline
    $D(G)$ & $p$-groups where $p$ is a prime, cyclic groups, $C_m\oplus C_n$ where $m,n\in\mathbb{N}, m|n$ \\ \hline
    $N_1(G)$ & cyclic groups, $C_2^m$, $C_3^m$, $C_p^2$ where $p$ is prime, $m\in \mathbb{N}$\\ \hline
    $K(G)$ & $p$-groups where $p$ is a prime, $C_{p^m}\oplus C_{p^n} \oplus C_q^s$, $\bigoplus_{i=1}^n C_{p_i^{e_i}} \oplus C_q^s$ where $p_1,\ldots,p_n$ are distinct primes satisfying certain conditions and $n\in\mathbb{N}$, $s\in\mathbb{Z}_{\ge0}$, $C_{p^mq}$, $C_{p^2q^2}$, $C_{pqr}$ where $p, q,r$ are distinct primes \\ \hline
    $k(G)$ & same as $K(G)$ \\ \hline
    $K_1(G)$ & $C_{p^m}$, $C_{p^m}\oplus C_p$, $C_{p^mq}$, $C_{p^m}\oplus C_q^2$, $C_{p^m} \oplus C_r^n$, $C_r\oplus C_{p^m}\oplus C_p, C_{rp^mq}$ and $C_r\oplus C_p^2\oplus C_q$ (under certain conditions), where $p,q$ are distinct (possibly even) primes, $m,n \in\mathbb{N}$, and $r\in\{2,3\}$\\
    \hline
    \end{tabular}
\end{center}

\section{An Outline of our Method}\label{outline}
For a given finite abelian group $G$, our general stategy will be to find a bound on $K_1(G)$ of the form
$$K_1(G) \le K_1^*(G) + [\text{extra terms}].$$
To do this, we choose a suitable subgroup $H \le G$ and using the quotient map $G\rightarrow G/H$ derive a bound of the form
$$K_1(G) \le K_1(H) + K_1(G/H) + [\text{extra terms}].$$
If we have $G\cong H\oplus G/H$ and it is known that $K_1(H)  = K_1^*(H)$ and $K_1(G/H) = K_1^*(G/H)$, so that $K_1^*(H) + K_1^*(G/H) = K_1^*(H\oplus G/H) = K_1^*(G)$, then the above inequality becomes
$$K_1(G) \le K_1^*(G) + [\text{extra terms}].$$
Ideally we would hope to simply get $K_1(G) \le K_1^*(G)$ in this way, which by Proposition \ref{lowerbound} would imply that $K_1(G) = K_1^*(G)$, but in most cases it seems that we can only show that the ``extra terms" are small. In Section \ref{lemmas}, we derive a general bound
$$K_1(G) \le K_1(G/H) + N_1(H)\cdot K(G/H)$$
but often we wish to obtain a better bound than this. To do so, we will often need to treat each case using \emph{ad hoc} methods, as in the proof of our first main result in Section \ref{mainresult}. 

\section{Lemmas} \label{lemmas}
In this section, we develop techniques, inspired by the arguments of Gao and Wang in \cite{GaoWang}, which will be used throughout the remainder of this paper. 

We first make the following observation.

\begin{remark} \label{factor} Given a UFIM $S$ over $G\setminus\{0\}$, for any $S'\subseteq S$ with $\sigma(S') = 0$, we have that $S'$ is a union of irreducible factors of $S$, and hence must have a unique factorization. Hence $S'$ is also a UFIM.
\end{remark}

We have the following useful reformulation of the notion of unique factorization.

\begin{proposition}[Equivalent Characterization of Unique Factorization, see \cite{Narkiewicz}]
\label{equivchar}
Let $G$ be a finite abelian group, and let $S$ be a zero-sum indexed multiset over $G\setminus\{0\}$. Then the following two conditions are equivalent:

\begin{enumerate}
\item \label{uniquefactorization} $S$ is a UFIM;
\item \label{refinement} For any two zero-sum submultisets $S_1$ and $S_2$ of $S$, the intersection $S_1\cap S_2$ is also a zero-sum indexed multiset.
\end{enumerate}
\end{proposition}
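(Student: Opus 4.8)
The plan is to prove the two implications separately; $(\ref{uniquefactorization})\Rightarrow(\ref{refinement})$ will be immediate from Remark~\ref{factor}, while $(\ref{refinement})\Rightarrow(\ref{uniquefactorization})$ needs a short argument by contradiction. For the first implication, write the unique irreducible factorization of $S$ as $[\ell]=\bigsqcup_{i=1}^m I_i$, and let $S_1=S(A)$ and $S_2=S(B)$ be zero-sum submultisets, where $A,B\subseteq[\ell]$. By Remark~\ref{factor}, each of $A$ and $B$ is a union of some of the $I_i$, say $A=\bigsqcup_{i\in\mathcal A}I_i$ and $B=\bigsqcup_{i\in\mathcal B}I_i$ with $\mathcal A,\mathcal B\subseteq[m]$. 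Since the $I_i$ are pairwise disjoint, $A\cap B=\bigsqcup_{i\in\mathcal A\cap\mathcal B}I_i$, so $S_1\cap S_2=S(A\cap B)$ is a disjoint union of minimal zero-sum factors and therefore satisfies $\sigma(S_1\cap S_2)=0$.

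For the reverse implication, the key observation I will use is that a zero-sum submultiset of a \emph{minimal} zero-sum indexed multiset $T$ is necessarily either $\emptyset$ or $T$ itself. Suppose for contradiction that $S$ satisfies (\ref{refinement}) but is not a UFIM. Every zero-sum indexed multiset over $G\setminus\{0\}$ admits at least one irreducible factorization (repeatedly extract a nonempty zero-sum submultiset of minimal cardinality, which is automatically minimal zero-sum, and remove it), so not being a UFIM means $S$ has at least two equivalence classes of irreducible factorizations, hence two non-equivalent ones $[\ell]=\bigsqcup_{i=1}^m I_i=\bigsqcup_{j=1}^n J_j$; after relabeling, there is an index $i_0$ with $I_{i_0}\neq J_j$ for every $j$. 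For each $j$, applying (\ref{refinement}) to the zero-sum submultisets $S(I_{i_0})$ and $S(J_j)$ shows that $S(I_{i_0}\cap J_j)=S(I_{i_0})\cap S(J_j)$ is zero-sum, so by the key observation and the minimality of $S(I_{i_0})$ we get $I_{i_0}\cap J_j\in\{\emptyset,I_{i_0}\}$. Since the sets $I_{i_0}\cap J_j$ partition the nonempty set $I_{i_0}$, exactly one of them, say the one with $j=j_0$, equals $I_{i_0}$, that is, $I_{i_0}\subseteq J_{j_0}$. But then $S(I_{i_0})$ is a nonempty zero-sum submultiset of the minimal zero-sum multiset $S(J_{j_0})$, so the key observation forces $I_{i_0}=J_{j_0}$, contradicting the choice of $i_0$.

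I expect no serious obstacle here; the argument is mostly bookkeeping once the key observation about minimal zero-sum multisets is in hand. The points that need a little care are: the convention that the irreducible factors of a factorization are nonempty; the fact, used above, that every zero-sum indexed multiset factors into minimal zero-sum pieces, so that ``not a UFIM'' genuinely produces two \emph{non-equivalent} factorizations rather than merely two presentations of the same class; and the passage between index sets and indexed multisets, where it is essential that the indexing distinguishes repeated group elements, so that the intersection of the submultisets $S(A)$ and $S(B)$ is exactly $S(A\cap B)$.
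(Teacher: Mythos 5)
Your proof is correct. Note that the paper offers no proof of Proposition \ref{equivchar} at all --- it is stated with a citation to \cite{Narkiewicz} --- so there is nothing to compare against; your argument is a complete, self-contained one. Both directions check out: the appeal to Remark \ref{factor} in the forward direction is non-circular, since that remark rests only on uniqueness (an irreducible factorization of a zero-sum $S'\subseteq S$ concatenated with one of $S\setminus S'$ gives a factorization of $S$, which must coincide with the unique one, so $S'$ is a union of the $I_i$), and the backward direction correctly uses that a minimal zero-sum multiset has no proper nonempty zero-sum submultiset. The caveats you flag at the end --- nonemptiness of irreducible factors, existence of at least one irreducible factorization so that ``not a UFIM'' yields two inequivalent ones, and the identification $S(A)\cap S(B)=S(A\cap B)$ at the level of index sets --- are precisely the points that need care, and you handle each of them.
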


\begin{proposition} \label{lift}
Given a UFIM $S$ over $G\setminus\{0\}$, for any submultiset $S_0 \subset S$ with $\sigma(S_0)\neq 0$, $(S\setminus S_0) \sqcup \{\sigma(S_0)\}$ is also a UFIM over $G\setminus\{0\}$. 
\end{proposition}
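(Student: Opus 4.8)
The plan is to verify condition \tref{refinement} of Proposition \ref{equivchar} for $T := (S\setminus S_0)\sqcup\{\sigma(S_0)\}$. First I would note that $T$ is genuinely a zero-sum indexed multiset over $G\setminus\{0\}$: since $\sigma(S_0)\neq 0$ the appended element is nonzero, the remaining elements come from $S$ hence lie in $G\setminus\{0\}$, and $\sigma(T) = \sigma(S\setminus S_0) + \sigma(S_0) = \sigma(S) = 0$. (Incidentally $S_0\subsetneq S$, since $S_0 = S$ would force $\sigma(S_0) = 0$.) Throughout, write $g_0$ for the freshly-indexed copy of $\sigma(S_0)$ appended to $S\setminus S_0$, so $g_0$ carries an index not occurring in $S$, and note the index-disjointness $S_0\cap(S\setminus S_0) = \emptyset$.

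The key device is a ``lift'' operation turning a zero-sum submultiset of $T$ into one of $S$. If $T'\subseteq T$ is zero-sum and $g_0\notin T'$, then $T'\subseteq S\setminus S_0\subseteq S$ is already a zero-sum submultiset of $S$. If $g_0\in T'$, define $\widehat{T'} := (T'\setminus\{g_0\})\sqcup S_0$; since $T'\setminus\{g_0\}\subseteq S\setminus S_0$ and $S_0\subseteq S$ are index-disjoint, $\widehat{T'}\subseteq S$, and $\sigma(\widehat{T'}) = \sigma(T'\setminus\{g_0\}) + \sigma(S_0) = \sigma(T'\setminus\{g_0\}) + \sigma(S_0)$, which equals $\sigma(T') = 0$ because the single element $g_0$ contributes $\sigma(S_0)$ as well. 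So $\widehat{T'}$ is a zero-sum submultiset of $S$.

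Now let $T_1, T_2$ be zero-sum submultisets of $T$; the goal is $\sigma(T_1\cap T_2) = 0$. I would split into three cases by whether $g_0$ lies in $T_1$, $T_2$. If $g_0$ is in neither, then $T_1, T_2$ are zero-sum submultisets of $S$, so $T_1\cap T_2$ is zero-sum since $S$ is a UFIM. If $g_0\in T_1$ but $g_0\notin T_2$, then $T_2\subseteq S\setminus S_0$ is index-disjoint from $S_0$, so $\widehat{T_1}\cap T_2 = (T_1\setminus\{g_0\})\cap T_2 = T_1\cap T_2$, and this is zero-sum because $\widehat{T_1}$ and $T_2$ are zero-sum submultisets of $S$. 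If $g_0\in T_1$ and $g_0\in T_2$, then by the same index-disjointness $\widehat{T_1}\cap\widehat{T_2} = \big[(T_1\cap T_2)\setminus\{g_0\}\big]\sqcup S_0$, which is zero-sum (being the intersection of two zero-sum submultisets of $S$); taking $\sigma$ gives $\sigma\big((T_1\cap T_2)\setminus\{g_0\}\big) = -\sigma(S_0)$, hence $\sigma(T_1\cap T_2) = -\sigma(S_0) + \sigma(S_0) = 0$. In all cases $T_1\cap T_2$ is zero-sum, so by Proposition \ref{equivchar} $T$ is a UFIM.

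There is no real obstacle here; the only thing demanding care is the bookkeeping of the fresh index $g_0$ and repeated use of $S_0\cap(S\setminus S_0) = \emptyset$ to push intersections through the lift. One should also remember that $g_0$ has multiplicity exactly one in $T$ (it is a single new indexed element), so ``$(T_1\cap T_2)\setminus\{g_0\}$'' and ``$(T_i\setminus\{g_0\})$'' behave as expected.
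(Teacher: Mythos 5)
Your proof is correct, but it takes a genuinely different route from the paper. You verify the intersection criterion of Proposition \ref{equivchar} for $T=(S\setminus S_0)\sqcup\{\sigma(S_0)\}$, lifting each zero-sum submultiset of $T$ containing the new element $g_0$ to the zero-sum submultiset $(T'\setminus\{g_0\})\sqcup S_0$ of $S$ and pushing intersections through this lift via the index-disjointness of $S_0$ and $S\setminus S_0$; the three-case analysis is complete and each case correctly reduces to the UFIM property of $S$. The paper instead argues directly at the level of factorizations: it exhibits a map from irreducible factorizations of $T$ to irreducible factorizations of $S$ (replace the factor containing $\sigma(S_0)$ by a factorization of that factor with $\sigma(S_0)$ swapped out for $S_0$), shows it admits a left inverse, and concludes injectivity, so $T$ has at most one factorization class. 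Your argument buys rigor and self-containedness --- it sidesteps the need to check that the paper's map and its left inverse are well defined (each involves a choice of refinement, which is harmless only because the targets are unique), at the cost of routing through the equivalent characterization rather than manipulating factorizations directly. Either proof is acceptable; yours is, if anything, the more carefully bookkept of the two.
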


\begin{proof} We have a map from irreducible factorizations of $(S\setminus S_0) \sqcup \{\sigma(S_0)\}$  to irreducible factorizations of $S$ given by deleting the irreducible factor $T$ containing $\{\sigma(S_0)\}$ and replacing it with an irreducible factorization of ($T\setminus\{\sigma(S_0)\})\sqcup S_0$.  This map has a left inverse given by replacing the smallest union $U$ of irreducible factors of $S$ containing $S_0$ with an irreducible factorization of $(U\setminus S_0)\sqcup \{\sigma(S_0)\}$. Hence the original map is injective. Thus since $S$ is a UFIM, so is $(S\setminus S_0) \sqcup \{\sigma(S_0)\}$. 
\end{proof}
Note that any map of groups $\phi: G\rightarrow G'$ induces an action on indexed multisets given by $\phi(S) = \{\phi(g_1),\ldots,\phi(g_{\ell})\}$ for $S = \{g_1,\ldots,g_{\ell}\}$. Observe that $\phi(S)$ is zero-sum if and only if $\sigma(S) \in \text{ker}(\phi)$.

\begin{remark} If $S$ is zero-sum, then $\phi(S)$ is zero-sum, but if $S$ is a UFIM, $\phi(S)$ is not necessarily a UFIM. For example, consider the UFIM over $C_3^2\setminus\{0\} = \mathbb{Z}_3^2\setminus\{0\}$:
$$S = \{(1,1), (2,2), (1,2), (2,1)\} = \{(1,1), (2,2)\}\sqcup \{(1,2), (2,1)\}$$ 
and the projection onto the first factor $\phi: C_3^2 \rightarrow C_3$. 
\end{remark}

We will use the following construction for the rest of our discussion.
\begin{construction} \label{construction} Given a group $G$, suppose we have a surjective group homomorphism $\phi : G \rightarrow G'$ and a UFIM $S$ over $G\setminus\{0\}$. Let $T(\phi)=\{x\in S : x\in \ker(\phi)\}$ (when the choice of $\phi$ is clear, we will simply write $T(\phi) = T$), let $S' = S\setminus T(\phi)$, and let $t\in \mathbb{Z}_{\ge0}$ be maximal such that there exist disjoint zero-sum free submultisets $S_1,\ldots,S_t$ of $S'$ such that for each $1\le i\le t$, $\sigma(S_i) \in \ker(\phi)\setminus\{0\}$. (Note we are slightly abusing notation: when $t=0$, there exists no $S_0\subseteq S'$ such that $S_0$ is zero-sum free and $\sigma(S_0) \in \ker(\phi)\setminus\{0\}$. For further interpretation of the case $t=0$, see Remark \ref{crossterms} below.) Let $S'' = S'\setminus\left(\bigsqcup_{i=1}^t S_i\right)$. Note that  for each $1\le i \le t$, $\phi(S_i)$ is a minimal zero-sum indexed multiset over $G'\setminus \{0\}$, seen as follows: for any $U_i \subsetneq S_i$ with $\phi(U_i)$ zero-sum, then since $S_i$ is zero-sum free, $\sigma(U_i) \in \ker(\phi)\setminus\{0\}$, which contradicts the maximality of $t$. 

Now we have
$$S = T(\phi) \sqcup S'' \sqcup \bigsqcup_{i=1}^t S_i$$
which implies
$$k(S) = k(T(\phi)) +  k(S'') + \sum_{i=1}^t k(S_i).$$
We will seek to bound $k(S)$ (and ultimately $K_1(G)$) by bounding each of the three summands on the right hand side.
\end{construction}

\begin{remark} \label{crossterms} When $\ker(\phi)$ is a direct factor of $G$, the zero-sum free submultisets $S_i$ in the construction above represent ``cross terms" in $S$, i.e., elements which do not belong to a single direct factor of $G \cong \ker(\phi) \oplus (G/\ker(\phi))$. As $\sigma(S_i) \in \ker(\phi)\setminus\{0\}$, the elements of $S_i$ can be thought of as adding together to ``cancel out" their $G/\ker(\phi)$ components . In particular, for any submultiset $S$ with $t=0$, each element of $S$ belongs to either $\ker(\phi)$ or $G/\ker(\phi)$. 
\end{remark}

\begin{remark}\label{phibound} Note now that for any group homomorphism $\phi: G\rightarrow G'$ and any $x\in G$, $\ord(\phi(x))\le \ord(x)$, and so for any UFIM $S$ over $G\setminus\{0\}$, we have $k(S') \le k(\phi(S'))$. 
\end{remark}

\begin{proposition} 
\label{phiunique} \label{split} \label{phibound2} \label{bound} \label{generalbound} \label{push}
In the notation of Construction \ref{construction}, we have that $S''$ is a UFIM over $G\setminus\{0\}$, $\phi(S'')$ is a UFIM over $(G/\ker(\phi))\setminus\{0\}$ and $T\sqcup \bigsqcup \{\sigma(S_i)\}$ is a UFIM over $\ker(\phi)\setminus\{0\}$. 
\label{remark}
As a consequence, we have the following:
\begin{enumerate}
\item \label{phiunique1}$k(T) + \sum_{i=1}^t k(\{\sigma(S_i)\}) = k(T\sqcup\bigsqcup_{i=1}^t \{\sigma(S_i)\}) \le K_1(\ker(\phi))$;
\item \label{phiunique2}$|T|+t =|T\sqcup\bigsqcup_{i=1}^t \{\sigma(S_i)\}| \le N_1(\ker(\phi))$;
\item \label{split3} $k(S) \le K_1(\ker(\phi)) + k(S')$;
\item \label{phibound2,4} $k(S) \le K_1(\ker(\phi)) + k(\phi(S'))$;
\item \label{bound5} $k(\phi(S')) \le K_1(G/\ker(\phi)) + t\cdot K(G/\ker(\phi))$;
\item \label{generalbound6} $K_1(G) \le K_1(G/\ker(\phi)) + N_1(\ker(\phi))\cdot K(G/\ker(\phi))$;
\item \label{push7}  if $t=0$, then $k(S) \le K_1(\ker(\phi)) + K_1(G/\ker(\phi))$.
\end{enumerate}
\end{proposition}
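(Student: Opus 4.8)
The plan is to establish the three unique-factorization assertions first — that $S''$ is a UFIM over $G\setminus\{0\}$, that $\phi(S'')$ is a UFIM over $(G/\ker(\phi))\setminus\{0\}$, and that $R:=T\sqcup\bigsqcup_{i=1}^t\{\sigma(S_i)\}$ is a UFIM over $\ker(\phi)\setminus\{0\}$ — and then to read off (1)--(7) from these together with the additivity of $k(\cdot)$ and $|\cdot|$ over disjoint unions, the inequality $k(\cdot)\le k(\phi(\cdot))$ of Remark \ref{phibound}, and the definitions of $K_1$, $N_1$, $K$. The one genuinely substantive input is the observation that $S''$ has no residual cross term: \emph{if $B\subseteq S''$ has $\sigma(B)\in\ker(\phi)$, then $\sigma(B)=0$.} Indeed, if $\sigma(B)\ne 0$, then deleting nonempty zero-sum submultisets from $B$ one at a time (which never changes the total sum) produces a zero-sum free $V\subseteq B\subseteq S'$ with $\sigma(V)=\sigma(B)\in\ker(\phi)\setminus\{0\}$; but then $V,S_1,\ldots,S_t$ are $t+1$ pairwise disjoint zero-sum free submultisets of $S'$ with sums in $\ker(\phi)\setminus\{0\}$, contradicting the maximality of $t$. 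In particular $\sigma(S'')=0$.

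Given $\sigma(S'')=0$, $S''$ is a zero-sum submultiset of the UFIM $S$, hence itself a UFIM over $G\setminus\{0\}$ by Remark \ref{factor}. Since $S''\subseteq S'=S\setminus T$ and $T$ collects exactly the elements of $S$ lying in $\ker(\phi)$, no element of $S''$ lies in $\ker(\phi)$, so $\phi(S'')$ is an indexed multiset over $(G/\ker(\phi))\setminus\{0\}$, and it is zero-sum because $\sigma(\phi(S''))=\phi(\sigma(S''))=0$. To show it is a UFIM I would use Proposition \ref{equivchar}: since $\phi$ preserves index sets, every submultiset of $\phi(S'')$ equals $\phi(B)$ for a unique $B\subseteq S''$, and $\phi(B)$ is zero-sum iff $\sigma(B)\in\ker(\phi)$ iff (by the observation) $\sigma(B)=0$; hence if $\phi(B_1)$ and $\phi(B_2)$ are zero-sum then $B_1$ and $B_2$ are zero-sum, so $B_1\cap B_2$ is zero-sum because $S''$ is a UFIM, and therefore $\phi(B_1)\cap\phi(B_2)=\phi(B_1\cap B_2)$ is zero-sum. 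For $R$, I would apply Proposition \ref{lift} successively with $S_0=S_1,\ldots,S_t$ (legitimate since the $S_i$ are pairwise disjoint and each $\sigma(S_i)\in\ker(\phi)\setminus\{0\}\subseteq G\setminus\{0\}$), obtaining that $T\sqcup S''\sqcup\bigsqcup_{i=1}^t\{\sigma(S_i)\}$ is a UFIM over $G\setminus\{0\}$ with total sum still equal to $\sigma(S)=0$; then $R$, the complement of the zero-sum submultiset $S''$ inside it, is zero-sum, hence a UFIM by Remark \ref{factor}, and since all its elements lie in $\ker(\phi)$ it is equally a UFIM over $\ker(\phi)\setminus\{0\}$.

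With these three facts the consequences are bookkeeping. Part (1) is additivity of $k$ plus $k(R)\le K_1(\ker(\phi))$; part (2) is additivity of $|\cdot|$ plus $|R|\le N_1(\ker(\phi))$. Part (3) holds because $k(S)=k(T)+k(S')$ while $k(T)\le k(R)\le K_1(\ker(\phi))$; part (4) combines (3) with $k(S')\le k(\phi(S'))$; part (5) uses $\phi(S')=\phi(S'')\sqcup\bigsqcup_i\phi(S_i)$ with $k(\phi(S''))\le K_1(G/\ker(\phi))$ and each $k(\phi(S_i))\le K(G/\ker(\phi))$, since $\phi(S_i)$ is minimal zero-sum. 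Part (7) is immediate, since $t=0$ forces $R=T$ and $S'=S''$, whence $k(S)=k(T)+k(S'')\le K_1(\ker(\phi))+k(\phi(S''))\le K_1(\ker(\phi))+K_1(G/\ker(\phi))$. For part (6) I would take a UFIM $S$ with $k(S)=K_1(G)$ and combine $k(S'')\le k(\phi(S''))\le K_1(G/\ker(\phi))$ and $k(S_i)\le k(\phi(S_i))\le K(G/\ker(\phi))$ to get $k(S)\le k(T)+K_1(G/\ker(\phi))+t\,K(G/\ker(\phi))$; then, assuming $G/\ker(\phi)\ne\{0\}$ (the only case of interest), one has $K(G/\ker(\phi))\ge 1$, since $p$ copies of an order-$p$ element of the quotient form a minimal zero-sum multiset of cross number $1$, so $k(T)\le|T|\le|T|\,K(G/\ker(\phi))$, and with part (2) this yields $k(T)+t\,K(G/\ker(\phi))\le(|T|+t)\,K(G/\ker(\phi))\le N_1(\ker(\phi))\,K(G/\ker(\phi))$.

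The step I expect to be the main obstacle is the proof that $\phi(S'')$ is a UFIM: it is the only place requiring the equivalent characterization of unique factorization (Proposition \ref{equivchar}), and it rests entirely on the ``no residual cross term'' observation, which is exactly where the maximality of $t$ is used. Everything else is a direct appeal to Remark \ref{factor} or Proposition \ref{lift}, or routine arithmetic with the definitions of the invariants.
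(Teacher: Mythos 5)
Your proposal is correct and follows essentially the same route as the paper: the maximality of $t$ is used to show that any submultiset of $S''$ whose sum lies in $\ker(\phi)$ is actually zero-sum (giving both that $S''$ is a zero-sum submultiset of $S$, hence a UFIM by Remark \ref{factor}, and that $\phi(S'')$ is a UFIM via Proposition \ref{equivchar}), Proposition \ref{lift} handles $T\sqcup\bigsqcup_{i=1}^t\{\sigma(S_i)\}$, and parts (1)--(7) are the same bookkeeping, including the $k(T)\le|T|\le|T|\cdot K(G/\ker(\phi))$ trick for (6). The only differences are cosmetic (you contract the $S_i$ in $S$ before passing to the complement of $S''$, the paper does it the other way around), and your explicit ``no residual cross term'' observation is in fact a slightly more careful rendering of the paper's appeal to the maximality of $t$.
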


\begin{proof}
We first show that $\phi(S'')$ is a UFIM. Since 
$$\sigma(\phi(S\setminus S'')) = \sigma\left(\phi\left(T\sqcup \bigsqcup_{i=1}^t S_i\right)\right) = \sigma(\phi(T)) + \sum_{i=1}^t\phi(\sigma(S_i)) = 0,$$
we have $\sigma(\phi(S''))=0$. Now choose any two zero-sum submultisets $\phi(U_1)$ and $\phi(U_2)$ of $\phi(S'')$. Then $\sigma(U_1),\sigma(U_2)\in\text{ker}(\phi)$. By maximality of $t$, we must have $\sigma(U_1)= \sigma(U_2) = 0$. Now since $S$ is a UFIM, by Proposition \ref{equivchar}, we have $\sigma(U_1\cap U_2) = 0$, and hence $\sigma(\phi(U_1\cap U_2)) = \sigma(\phi(U_1) \cap\phi(U_2)) = 0$. Since $\phi(U_1)$ and $\phi(U_2)$ were arbitrary zero-sum submultisets of $\phi(S'')$, again by Proposition \ref{equivchar}, we have that $\phi(S'')$ is a UFIM. 

We now show that $S''$ is a UFIM. Suppose now that $S''$ is not zero-sum, so that we can choose a zero-sum free submultiset $U\subseteq S''$. Then $\sigma(U)\in \ker(\phi)\setminus\{0\}$ which contradicts the maximality of $t$ (see Construction \ref{construction}). Hence $S''$ is zero-sum. Thus, since $S$ is a UFIM over $G\setminus\{0\}$ and $S'' \subseteq S$ with $\sigma(S'') = 0$, then $S''$ is also a UFIM over $G\setminus\{0\}$ (see Remark \ref{factor}). Now since $S''$ is a UFIM, so is $S\setminus S'' = T\sqcup \bigsqcup_{i=1}^t S_i'$. By Proposition \ref{lift}, we have that $T\sqcup \bigsqcup_{i=1}^t \{\sigma(S_i)\}$ is a UFIM. 

Now (\ref{phiunique1}) and (\ref{phiunique2}) follow from the definitions of $K_1(G)$ and $N_1(G)$. Since $k(T) \le k(T) + \sum_{i=1}^t k(\{\sigma(S_i)\}) = k(T\sqcup \bigsqcup_{i=1}^t \{\sigma(S_i)\}) \le K_1(\ker(\phi))$, (\ref{split3}) follows.  By (\ref{split3}) and Remark \ref{phibound}, we have $k(S) \le K_1(\ker(\phi)) + k(S') \le K_1(\ker(\phi)) + k(\phi(S'))$, and this is (\ref{phibound2,4}). 

Since $\phi(S'')$ is a UFIM over $(G/\text{ker}(\phi))\setminus\{0\}$, we have
$$k(\phi(S')) = k(\phi(S'')) + \sum_{i=1}^t k(\phi(S_i)) \le K_1(G/\ker(\phi)) + \sum_{i=1}^t k(\phi(S_i)).$$
By the maximality of $t$, $\phi(S_i)$ must be a minimal zero-sum indexed multiset over $(G/\ker(\phi))\setminus\{0\}$ which implies $k(\phi(S_i)) \le K(G/\text{ker}(\phi))$ and thus $\sum_{i=1}^t k(\phi(S_i)) \le t\cdot K(G/\ker(\phi))$. Putting this all together, we have $k(\phi(S')) \le K_1(G/\ker(\phi)) + t\cdot K(G/\ker(\phi))$, which is (\ref{bound5}).

Now from (\ref{phibound2}), we have $t\le N_1(\text{ker}(\phi)) - |T|$. Furthermore, observe that $k(T) \le |T|$ and $K(G) \ge 1$ for any finite abelian group $G$. Hence by Remark \ref{phibound}, (\ref{split3}), and (\ref{bound5}), we have 
\begin{align*}
k(S) &= k(T) + k(S') \le k(T) + k(\phi(S'))\le k(T) + K_1(G/\ker(\phi)) + t\cdot K(G/\ker(\phi)) 
\\ &\le k(T) + K_1(G/\ker(\phi)) + (N_1(\ker(\phi)) - |T|)\cdot K(G/\ker(\phi))
\\ &\le k(T) - |T| + K_1(G/\ker(\phi)) + N_1(\ker(\phi))\cdot K(G/\ker(\phi)) 
\\ &\le  K_1(G/\ker(\phi)) + N_1(\ker(\phi))\cdot K(G/\ker(\phi)) \end{align*}
and (\ref{generalbound6}) follows. Finally, (\ref{push7}) follows from (\ref{phibound2,4}) and taking $t=0$ in (\ref{bound5}).
\end{proof}

\section{First Main Result}
\label{mainresult}
In this section we prove our first main results, namely Theorem \ref{mainthm} and Corollary \ref{corollary}. We first make the following remark.
\begin{remark} \label{N1K1} For any prime $p$ and $n\in \mathbb{N}$, we have 
$$N_1(C_p^n) = p K_1(C_p^n),$$
since each nonzero element of $C_p^n$ has order $p$.
\end{remark}

\begin{proof}[Proof of Theorem \ref{mainthm}]
We prove (\ref{1}) and (\ref{2}) separately.
\\

\noindent \textbf{Proof of (\ref{1}):} 
Suppose $S$ is a UFIM over $(C_{p^m}\oplus C_p^n)\setminus\{0\}$. Put $T_k = \{x\in S : \ord(x) = p^k\}$, and put $a_k = |T_k|$. Let $\phi: C_{p^m}\oplus C_p^n \rightarrow C_{p^{m-1}}$ be the ``multiplication by $p$" map. Now in the notation of Construction \ref{construction}, we have that $T = T_1$ so that $S' = S\setminus T_1$.

We have 
$$k(S) = \frac{a_1}{p} + \frac{a_2}{p^2} + \cdots + \frac{a_m}{p^m}\hspace{2mm} \text{which implies}\hspace{2mm} k(S') = \frac{a_2}{p^2} + \frac{a_3}{p^3}+ \cdots+ \frac{a_m}{p^m},$$
and since $\phi$ is the ``multiplication by $p$" map, 
$$k(\phi(S')) = p\cdot k(S') = \frac{a_2}{p} + \frac{a_3}{p^2}+ \cdots + \frac{a_m}{p^{m-1}}.$$ 
So now we have
$$k(S) = k(S') + k(T_1)= \frac{1}{p}\cdot k(\phi(S')) + k(T_1)$$
and by Proposition \ref{bound} (\ref{bound5}), we have 
$$k(\phi(S')) \le K_1(G/\ker(\phi)) + t\cdot K(G/\ker(\phi)) = K_1(C_{p^{m-1}}) + t\cdot K(C_{p^{m-1}}).$$
so we have
$$k(S) \le \frac{1}{p}\cdot\left[K_1(C_{p^{m-1}}) + t\cdot K(C_{p^{m-1}})\right] + k(T_1).$$
Note that $T_1 = \{x\in S: x\in \ker(\phi)\}$, $|T_1| = a_1$ and by Proposition \ref{N1} and Corollary \ref{remark} $(2)$ and Remark \ref{N1K1} we have 
$$a_1 + t = |T_1| + t \le N_1\left(\text{ker}(\phi)\right) = N_1(C_p^{n+1}) = K_1(C_p^{n+1})p\hspace{2mm} \text{which implies} \hspace{2mm} t\le K_1(C_p^{n+1})p-a_1.$$
Note also that $k(T_1) = \frac{a_1}{p}.$ By Proposition \ref{Kbound}, we have $K(C_{p^{m-1}})= 1$, and by Theorem \ref{GaoWang} we have $K_1(C_{p^{m-1}}) = 1+\frac{1}{p}+\cdots+\frac{1}{p^{m-1}}$, so in all we have 
\begin{align*}
k(S) &\le \frac{1}{p}\cdot\left[K_1(C_{p^{m-1}}) + t\cdot K(C_{p^{m-1}})\right] + \frac{a_1}{p} \le \frac{1}{p}\cdot\left[1+ \frac{1}{p}+ \cdots +\frac{1}{p^{m-2}} + \left(K_1(C_p^{n+1})p-a_1\right)\right]
\\& + \frac{a_1}{p} = K_1(C_{p^m})+ K_1(C_p^{n+1}) -1.\end{align*} \qed

\noindent \textbf{Proof of (\ref{2}):} Suppose $S$ is a UFIM over $(C_{p^m}\oplus C_q^n)\setminus\{0\}$. Let $T_{ij} = \{x\in S : \text{ord}(x) = p^iq^j\}$ for $0\le i\le m$ and $0 \le j \le 1$ (note $T_{00} = \emptyset$), and put $a_{ij} = |T_{ij}|$. Then we have 
$$k(S) = \frac{a_{10}}{p} + \cdots + \frac{a_{m0}}{p^m} + \frac{a_{11}}{pq} + \cdots +\frac{a_{m1}}{p^mq} + \frac{a_{01}}{q}.$$
Let $\phi_1 : C_{p^m} \oplus C_q^n \rightarrow C_{p^m}$ and $\phi_2 : C_{p^m} \oplus C_q^n \rightarrow C_q^n$ be the canonical projections. Now in the notation of Construction \ref{construction}, we have that $T(\phi_1) = T_{01}$, so that $S' = S\setminus T_{01}$. Then we have
$$k(\phi_1(S')) = \frac{a_{10}}{p} + \cdots + \frac{a_{m0}}{p^m} + \frac{a_{11}}{p} + \cdots +\frac{a_{m1}}{p^m}.$$ 
Now note that $T_{01} = \{x\in S: x\in \text{ker}(\phi_1)\}$ and $|T_{01}| + t = a_{01} + t$, so by Corollary \ref{remark} (2) and Remark \ref{N1K1} we have that 
$$t\le N_1(C_q^n)-a_{01} = qK_1(C_q^n) - a_{01}.$$
From Proposition \ref{Kbound}, $K(C_{p^m}) = 1$, and by Theorem \ref{GaoWang}, we have $K_1(C_{p^m}) = 1 + \frac{1}{p} + \cdots +\frac{1}{p^{m-1}}$. So now by Proposition \ref{bound} (\ref{bound5}), 

\begin{align*}
k(\phi_1(S')) &\le K_1(G/\text{ker}(\phi_1)) + t\cdot K(G/\text{ker}(\phi_1))= K_1(C_{p^m}) + t\cdot K_1(C_{p^m}) 
\\ &\le 1 + \frac{1}{p} + \cdots + \frac{1}{p^{m-1}} + (qK_1(C_q^n)-a_{01}).
\end{align*}
Thus
\begin{align*}
k(S') &= \frac{a_{10}}{p} + \cdots + \frac{a_{m0}}{p^m} + \frac{a_{11}}{pq} + \cdots + \frac{a_{m1}}{p^mq}
\\ &= \frac{1}{q}\cdot k(\phi_1(S')) + \frac{q-1}{q}\left[\frac{a_{10}}{p} + \cdots + \frac{a_{m0}}{p^m}\right] 
\\ &\le \frac{1}{q}\left[1 + \frac{1}{p} + \cdots + \frac{1}{p^{m-1}} + (qK_1(C_q^n)-a_{01})\right] + \frac{q-1}{q}\left[\frac{a_{10}}{p} + \cdots + \frac{a_{m0}}{p^m}\right]. \end{align*} 
Now in the notation of Construction \ref{construction} with respect to the homomorphism $\phi_2$, let $T(\phi_2) = \{x\in S: x\in \ker(\phi_2)\}$. Notice that $T(\phi_2) = \bigsqcup_{i=1}^{m_0}T_{i0}$, so by Corollary \ref{remark} (1), we know that $k(T(\phi_2)) \le K_1(C_{p^m})$, i.e.
$$\frac{a_{10}}{p} + \cdots + \frac{a_{m0}}{p^m} - \left(1 + \frac{1}{p} + \cdots + \frac{1}{p^{m-1}}\right)\le 0.$$ 
So now by Remark \ref{phibound}, we have 
\begin{align*}
k(S) &= k(S') + k(T_{01}) \le \frac{1}{q}\left[1 + \frac{1}{p} + \cdots + \frac{1}{p^{m-1}} + (qK_1(C_q^n)-a_{01})\right] 
\\ &+  \frac{q-1}{q}\left[\frac{a_{10}}{p} + \cdots + \frac{a_{m0}}{p^m}\right] + \frac{a_{01}}{q} = 1 + \frac{1}{p} + \cdots + \frac{1}{p^{m-1}} + K_1(C_q^n) 
\\ &+ \frac{q-1}{q}\left[\frac{a_{10}}{p} + \cdots \frac{a_{m0}}{p^m} - \left(1+\frac{1}{p} + \cdots + \frac{1}{p^{m-1}}\right)\right] 
\le K_1(C_{p^m}) + K_1(C_q^n). \end{align*}
\end{proof}

\begin{remark} \label{maximalstructurepq} Note that when $m=n=1$ in the proof of (2), i.e. when $G = C_{pq}$, we may deduce more about the structure of a UFIM achieving \emph{maximal} cross number as follows: From the last chain of inequalities in the proof of (2), we have 
$$k(S) \le K_1(C_p) + K_1(C_q) + \frac{q-1}{q}\left(\frac{a_{10}}{p} -1\right)\le K_1(C_p) + K_1(C_q) = 2 = K_1^*(C_{pq})$$
with equality holding only if $\frac{a_{10}}{p} = 1$. By symmetry, we also have $\frac{a_{01}}{q} = 1$. Hence, for a maximal cross number-achieving UFIM $S$, 
$$2 = \frac{a_{10}}{p} + \frac{a_{01}}{q} \le \frac{a_{10}}{p} + \frac{a_{01}}{q} + \frac{a_{11}}{pq}\le k(S) \le K_1(C_{pq}) = 2 \hspace{2mm} \text{which implies} \hspace{2mm} a_{11} = 0,$$
that is, $S$ has no ``cross terms" in the sense of Remark \ref{crossterms}, and so we may split $S$ into a disjoint union $S_p\sqcup S_q$ where $S_p$ is a UFIM over $C_p\setminus\{0\}$ and $S_q$ is a UFIM over $C_q\setminus\{0\}$. 
\end{remark}
\begin{proof}[Proof of Corollary \ref{corollary}]
This follows directly from Proposition \ref{lowerbound}, Theorem \ref{GaoWang} and Theorem \ref{mainthm}.

\end{proof}

\section{Structural Results}\label{structuralresults}
We now prove some results which give us information on the structure of UFIMs in relation to the structure of the ambient group. In particular, Lemma \ref{lowestorder} will comprise a key step in proving our second main result in Section \ref{2pq} by allowing us to derive a stronger upper bound for the cross number when there are ``few" elements of lowest possible order. 

For any $n\in \mathbb{Z}$, let $P^-(n)$ denote the smallest (positive) prime divisor of $n$, and let $P^+(n)$ denote the largest prime divisor of $n$. 
\begin{proposition}[\cite{NarkiewiczSliwa}] \label{factorproduct} \label{mbound} \label{ksum} Let $G$ be a finite abelian group and let $S$ be a UFIM over $G\setminus\{0\}$. Then if $\bigsqcup_{i=1}^m I_i$ is the irreducible factorization of $S$, we have 
$$\prod_{i=1}^m |I_i| \le |G|.$$
Furthermore we have $m \le \log_2 |G|$, and for any choice of $g_i\in S(I_i)$ for each $1\le i\le m$, we have
$$\sum_{i=1}^m k(\{g_i\}) \le \frac{m}{P^-(|G|)} \le \frac{\log_2 |G|}{P^-(|G|)}.$$
\end{proposition}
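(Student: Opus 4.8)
The plan is to attack the three assertions in sequence, since each builds on the previous. First I would prove $\prod_{i=1}^m |I_i| \le |G|$. The key idea: each irreducible factor $S(I_i)$ is minimal zero-sum, so $S(I_i)\setminus\{g\}$ for any fixed $g\in S(I_i)$ is zero-sum free. I would use the factors to build an injection of an appropriate product-indexed family into $G$. Concretely, fix a representative $g_i\in S(I_i)$ and consider the map sending a tuple $(T_1,\dots,T_m)$, with $T_i$ a submultiset of $S(I_i)\setminus\{g_i\}$, to $\sum_i \sigma(T_i)\in G$. The main point is to show this map is injective on the set of all such tuples (which has cardinality $\prod_{i=1}^m |I_i|$, since $|S(I_i)\setminus\{g_i\}| = |I_i|-1$ and a zero-sum free multiset of size $|I_i|-1$ has $2^{|I_i|-1}$ submultisets... wait, that overcounts). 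Let me reconsider: the cleaner route is to note that for a \emph{single} zero-sum free multiset $Z$, all $|Z|+1$ "partial sums" along an ordering are distinct, but to get the product bound one should instead observe that the partial sums $0, \sigma(\{g_1,\dots,g_j\})$ give $|I_i|$ distinct values inside the cyclic-like structure and then combine across factors using the UFIM property via Proposition \ref{equivchar}: if two different tuples gave the same total sum, subtracting would produce a nontrivial zero-sum submultiset meeting the factors non-trivially, contradicting unique factorization. I expect this injectivity argument — getting exactly $\prod|I_i|$ distinct elements of $G$ rather than merely $\sum(|I_i|-1)+1$ — to be the main obstacle, and it is likely where the UFIM hypothesis (not merely zero-sum-ness) is essential.

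Second, from $\prod_{i=1}^m |I_i| \le |G|$ the bound $m \le \log_2|G|$ is immediate: each factor $S(I_i)$ is minimal zero-sum over $G\setminus\{0\}$, hence $|I_i| \ge 2$ (a single nonzero element cannot sum to zero), so $2^m \le \prod_{i=1}^m |I_i| \le |G|$, giving $m \le \log_2 |G|$.

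Third, for the cross-number sum: for any $g_i \in S(I_i)$, the order $\ord(g_i)$ divides $\Exp(G)$, which divides $|G|$, so $\ord(g_i) \ge P^-(|G|)$; indeed $\ord(g_i)$ is at least the smallest prime dividing $|G|$ since $\ord(g_i) > 1$. Hence $k(\{g_i\}) = \frac{1}{\ord(g_i)} \le \frac{1}{P^-(|G|)}$, and summing over $i$ gives $\sum_{i=1}^m k(\{g_i\}) \le \frac{m}{P^-(|G|)} \le \frac{\log_2|G|}{P^-(|G|)}$ by the second assertion. This last part is routine; the whole weight of the proposition rests on the first bound, and I would spend my effort there, being careful to use Proposition \ref{equivchar} (intersections of zero-sum submultisets are zero-sum) rather than just the definition of minimal zero-sum, since that is what rules out collisions between partial sums drawn from different irreducible factors.
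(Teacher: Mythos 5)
The portion of this proposition that the paper actually proves consists only of the second and third assertions; the product bound $\prod_{i=1}^m |I_i|\le |G|$ is quoted directly from Narkiewicz--\'{S}liwa and not re-derived. On those two assertions your argument coincides with the paper's: each minimal zero-sum multiset over $G\setminus\{0\}$ has $|I_i|\ge 2$, so $2^m\le\prod_{i=1}^m|I_i|\le|G|$ and $m\le\log_2|G|$; and $\ord(g)\ge P^-(|G|)$ for every $g\in G\setminus\{0\}$ gives $\sum_{i=1}^m k(\{g_i\})\le m/P^-(|G|)$. So for everything the paper proves, you are in complete agreement.

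Your additional attempt to prove the cited product bound is in the right spirit (it is essentially the Narkiewicz--\'{S}liwa argument), but the decisive injectivity step is left vague at exactly the point where it needs care. Order each factor and let $h_i(j)$ denote the sum of its first $j$ elements, $0\le j\le |I_i|-1$; you want $(j_1,\dots,j_m)\mapsto\sum_i h_i(j_i)$ to be injective. If two tuples collide, then for each $i$ one has $h_i(j_i)-h_i(j_i')=\pm\sigma(D_i)$, where $D_i$ is the segment of $S(I_i)$ strictly between positions $\min(j_i,j_i')$ and $\max(j_i,j_i')$, and the resulting relation $\sum_i\pm\sigma(D_i)=0$ is \emph{not} yet a zero-sum submultiset of $S$ because of the signs. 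The missing move is to replace each negatively signed $D_i$ by its complement $S(I_i)\setminus D_i$, whose sum is $-\sigma(D_i)$ because $\sigma(S(I_i))=0$; the union of the resulting pieces is then an honest zero-sum submultiset of $S$, hence by Remark \ref{factor} a union of whole irreducible factors. Since $|D_i|\le|I_i|-1$, each piece is a proper, and each complement a nonempty, submultiset of its factor, which forces every $D_i$ to be empty and hence $j_i=j_i'$ for all $i$. Without this complementation step, the claim that subtracting the two equal sums produces a zero-sum submultiset contradicting unique factorization does not go through.
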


\begin{proof} For the first statement, see \cite{NarkiewiczSliwa}. For each irreducible factor $S(I_i)$, since $S(I_i)$ is zero-sum over $G\setminus\{0\}$, we have $|I_i| = |S(I_i)| \ge 2$, and so $2^m \le \prod_{i=1}^m |I_i| \le |G|$ which implies $m\le \log_2 |G|$. Now since  $\ord(g) \ge P^-(|G|)$ for all $g\in G\setminus\{0\}$, the third statement follows.

\end{proof}

We now prove a statement that gives a lower bound for the number of irreducible factors for a hypothetical counterexample to Conjecture \ref{K1conjecture}. 

\begin{proposition} \label{sizelimit} \label{plengthbound} Let $G = \bigoplus_{i=1}^n \bigoplus_{j=1}^{n_i} C_{p_i^{e_{ij}}}$ be an abelian group (written as a direct sum of prime-power cyclic groups) such that $k(G) = k^*(G)$. Let $S$ be a UFIM over $G\setminus\{0\}$ with irreducible factorization $\bigsqcup_{i=1}^m I_i$. Then if 
$$m \le \sum_{i=1}^n \sum_{j=1}^{n_i} \frac{P^-(|G|)}{p_i} K_1^*(C_{p_i^{e_{ij}}})$$
then $k(S) \le K_1^*(G)$. In particular, for a $p$-group $G$, if $m\le K_1^*(G)$, then $k(S) \le K_1^*(G)$.
\end{proposition}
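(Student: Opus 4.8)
The plan is to bound $k(S)$ by combining two facts: the cross number of any single irreducible factor is controlled by $K(G)$, and a stronger bound holds when we only pick representatives from each factor. More precisely, write the irreducible factorization as $S = \bigsqcup_{i=1}^m S(I_i)$, so that $k(S) = \sum_{i=1}^m k(S(I_i))$. The idea is to split each irreducible factor $S(I_i)$ into a single chosen element $g_i \in S(I_i)$ and the remainder $S(I_i) \setminus \{g_i\}$, which is zero-sum free. Summing over $i$, the remainders $\bigsqcup_{i=1}^m (S(I_i)\setminus\{g_i\})$ form a zero-sum free submultiset of $S$ (any zero-sum subset would have to be a union of whole irreducible factors, but this multiset contains no whole factor). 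Hence by the hypothesis $k(G) = k^*(G)$ we get
$$\sum_{i=1}^m k(S(I_i)\setminus\{g_i\}) \le k(G) = k^*(G) = \sum_{i=1}^n\sum_{j=1}^{n_i}\frac{p_i^{e_{ij}}-1}{p_i^{e_{ij}}}.$$
Meanwhile $\sum_{i=1}^m k(\{g_i\}) \le \frac{m}{P^-(|G|)}$ by Proposition \ref{mbound}. Adding the two contributions gives $k(S) \le k^*(G) + \frac{m}{P^-(|G|)}$.

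Now I would unpack what we want to prove. By definition $K_1^*(G) = \sum_{i,j}\sum_{k=1}^{e_{ij}} p_i^{-(k-1)}$, and one checks the identity $K_1^*(G) - k^*(G) = \sum_{i,j}\big(K_1^*(C_{p_i^{e_{ij}}}) - \tfrac{p_i^{e_{ij}}-1}{p_i^{e_{ij}}}\big) = \sum_{i,j}\tfrac{1}{p_i}K_1^*(C_{p_i^{e_{ij}}})$; this last equality is the telescoping relation $\frac{1}{p}K_1^*(C_{p^e}) = \frac{1}{p}\sum_{k=1}^e p^{-(k-1)} = \sum_{k=1}^e p^{-k}$, and $\sum_{k=1}^e p^{-k} = K_1^*(C_{p^e}) - \frac{p^e-1}{p^e}$ after a short rearrangement. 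Therefore the hypothesis $m \le \sum_{i,j}\frac{P^-(|G|)}{p_i}K_1^*(C_{p_i^{e_{ij}}})$ is exactly the statement $\frac{m}{P^-(|G|)} \le K_1^*(G) - k^*(G)$, and combining this with the bound $k(S) \le k^*(G) + \frac{m}{P^-(|G|)}$ yields $k(S) \le K_1^*(G)$, as desired.

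For the "in particular" clause about a $p$-group $G$: here $P^-(|G|) = p$ and there is a single prime, so the hypothesis $m \le K_1^*(G)$ becomes $\frac{m}{p} \le \frac{1}{p}K_1^*(G)$. But wait — I need $\frac{m}{p} \le K_1^*(G) - k^*(G) = \frac{1}{p}K_1^*(G)$ (using the identity above with all terms having the same prime $p$, one gets $K_1^*(G) - k^*(G) = \frac{1}{p}K_1^*(G)$). So the condition $m \le K_1^*(G)$ is precisely $\frac{m}{p}\le \frac{1}{p}K_1^*(G)$, which is the special case of the general hypothesis, and we are done.

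The main obstacle I anticipate is the combinatorial claim that $\bigsqcup_{i=1}^m (S(I_i)\setminus\{g_i\})$ is zero-sum free. This needs the characterization (Remark \ref{factor} / Proposition \ref{equivchar}) that in a UFIM every zero-sum submultiset is a union of irreducible factors; since our submultiset meets every irreducible factor $S(I_i)$ but omits $g_i \in I_i$, it contains no whole factor and hence no nonempty zero-sum subset. I should double-check the edge case where some $|I_i| = 1$ — but that cannot happen since each $S(I_i)$ is a minimal zero-sum multiset over $G\setminus\{0\}$, forcing $|I_i|\ge 2$, so $S(I_i)\setminus\{g_i\}$ is genuinely nonempty and the argument is not vacuous. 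The arithmetic identity relating $K_1^*$, $k^*$ and the per-factor quantities is routine but should be stated cleanly so the reader sees why the numerical hypothesis is the natural one.
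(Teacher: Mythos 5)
Your proposal is correct and follows essentially the same route as the paper: choose a representative $g_i$ from each irreducible factor, observe that both $\bigsqcup_i (S(I_i)\setminus\{g_i\})$ and $\bigsqcup_i\{g_i\}$ are zero-sum free, bound the former by $k(G)=k^*(G)$ and the latter by $m/P^-(|G|)$, and note that the hypothesis on $m$ is precisely $m/P^-(|G|)\le K_1^*(G)-k^*(G)$. Your explicit justification of the zero-sum-freeness and the verification of the identity $K_1^*(C_{p^e})-k^*(C_{p^e})=\tfrac{1}{p}K_1^*(C_{p^e})$ are details the paper leaves implicit, but the argument is the same.
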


\begin{proof} If $n>1$, assume without loss of generality that $P^-(|G|) = p_1 < \cdots < p_n$. For each $1\le i \le m$ choose a $g_i\in S(I_i)$, and observe that by unique factorization, both $\bigsqcup_{i=1}^m S(I_i)\setminus\{g_i\}$ and $\bigsqcup_{i=1}^m \{g_i\}$ are zero-sum free, so that $k\left(\bigsqcup_{i=1}^m S(I_i)\setminus\{g_i\}\right) \le k(G)$. By Proposition \ref{ksum}, we have $\sum_{i=1}^m k(\{g_i\}) \le \frac{m}{p_1}$. Now note that by our assumption on $m$,
$$K_1^*(G) - k^*(G) = \sum_{i=1}^n \sum_{j=1}^{n_i} \frac{p_i^{e_{ij}}-1}{p_i^{e_{ij}} - p_i^{e_{ij}-1}} - \sum_{i=1}^n \sum_{j=1}^{n_i} \frac{p_i^{e_{ij}}-1}{p_i^{e_{ij}}} = \sum_{i=1}^n \sum_{j=1}^{n_i} \frac{1}{p_i}K_1^*(C_{p_i^{e_{ij}}})\ge \frac{m}{p_1},$$
and hence  $\sum_{i=1}^m k(\{g_i\}) \le \frac{m}{p_1} \le K_1^*(G) - k^*(G) = K_1^*(G) - k(G)$. Thus
$$k(S) = k\left(\bigsqcup_{i=1}^m S(I_i)\setminus\{g_i\}\right) + \sum_{i=1}^m k(\{g_i\}) \le k(G) + K_1^*(G) - k(G) = K_1^*(G).$$
The statement for $p$-groups follows immediately by taking $n=1$.
\end{proof} 

Intuitively, it would seem that in order for the cross number of a indexed multiset $S$ to be large, low-order elements should be in some sense ``common" in $S$. The following lemma studies the effect on $k(S)$ of the distribution of elements of lowest possible order among the irreducible factors of a UFIM $S$. In particular, if none of the irreducible factors of $S$ consist entirely of elements of lowest possible order, then for certain classes of finite abelian groups we shall be able to prove that $k(S)$ will ``eventually" be less than $K_1^*(G)$ (see Corollary \ref{noporder}).  

\begin{lemma} \label{lowestorder} Suppose $G = \bigoplus_{i=1}^n\bigoplus_{j=1}^{n_i} C_{p_i^{e_{ij}}}$ is a finite abelian group with $p_1 < \cdots < p_n$ and which does not satisfy both $n=1$ and $\max_{1\le j \le n_1} e_{1j} =1$ (i.e. $G$ is not an elementary $p_1$-group). For any UFIM $S$ over $G\setminus\{0\}$, let $S_{p_1}$ be the union of all irreducible factors of $S$ whose elements are contained in $C_{p_1}^{n_1}$, so that $S_{p_1}$ is a UFIM over $C_{p_1}^{n_1}\setminus\{0\}$ (note that possibly $S_{p_1} = \emptyset$). Let $m_{p_1}$ be the number of irreducible factors of $S_{p_1}$. Then
\[k(S)  \le 
\begin{cases} 
      k(G) + \frac{(\log_2 |G|)-m_{p_1}}{\min\{p_1^2, p_2\}} + \frac{m_{p_1}}{p_1} &  n>1 \; \emph{and} \;\max_{1\le j\le n_1} e_{1j} > 1 \\
      k(G) + \frac{(\log_2 |G|)-m_{p_1}}{p_2} + \frac{m_{p_1}}{p_1} & n>1 \; \emph{and} \; \max_{1\le j \le n_1} e_{1j} = 1\\
      k(G) + \frac{(\log_2 |G|)-m_{p_1}}{p_1^2} + \frac{m_{p_1}}{p_1} & n=1\; \emph{and}\; \max_{1\le j\le n_1} e_{1j} > 1.
\end{cases}
\]
\end{lemma}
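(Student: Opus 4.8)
The plan is to pick a single element out of each irreducible factor of $S$ and bound the cross number of what remains by $k(G)$, then bound the contribution of the chosen elements by splitting according to whether the factor lies entirely inside $C_{p_1}^{n_1}$ or not. Concretely, write the irreducible factorization of $S$ as $\bigsqcup_{i=1}^m I_i$, and split the index set $\{1,\dots,m\}$ into those $i$ for which $S(I_i) \subseteq C_{p_1}^{n_1}$ (these are exactly the factors making up $S_{p_1}$, of which there are $m_{p_1}$) and the rest (of which there are $m - m_{p_1}$). For each $i$ choose a representative $g_i \in S(I_i)$; by unique factorization $\bigsqcup_{i=1}^m \bigl(S(I_i)\setminus\{g_i\}\bigr)$ is zero-sum free, so its cross number is at most $k(G)$, and hence
$$k(S) \le k(G) + \sum_{i=1}^m k(\{g_i\}) = k(G) + \sum_{i \,:\, S(I_i)\subseteq C_{p_1}^{n_1}} \frac{1}{\ord(g_i)} + \sum_{i \,:\, S(I_i)\not\subseteq C_{p_1}^{n_1}} \frac{1}{\ord(g_i)}.$$

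For the first sum, each $g_i$ lies in $C_{p_1}^{n_1}\setminus\{0\}$ and so has order exactly $p_1$, giving a contribution of at most $m_{p_1}/p_1$. For the second sum, the key point is that when $S(I_i)$ is not contained in $C_{p_1}^{n_1}$, one can choose the representative $g_i$ to have order strictly larger than $p_1$ — indeed, I would argue that $S(I_i)$, being a minimal zero-sum multiset not contained in $C_{p_1}^{n_1}$, must contain an element whose order is divisible by a prime $p_j$ with $j \ge 2$ (hence order $\ge p_2$) or, in the $p_1$-group case, an element of order $\ge p_1^2$; here one uses minimality of the zero-sum factor together with the fact that the sum of a multiset of order-$p_1$ elements again lies in $C_{p_1}^{n_1}$. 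Thus each such $1/\ord(g_i)$ is at most $1/\min\{p_1^2,p_2\}$ in the mixed case with $p_1$-part non-elementary, at most $1/p_2$ in the mixed elementary case, and at most $1/p_1^2$ in the $p_1$-group case; summing over the $m - m_{p_1}$ such factors and using $m \le \log_2|G|$ (Proposition \ref{plengthbound}) yields exactly the three claimed bounds, noting that $(\log_2|G|) - m_{p_1} \ge m - m_{p_1} \ge 0$ and that the denominators $\min\{p_1^2,p_2\}, p_2, p_1^2$ are all at least $p_1$, so enlarging $m$ to $\log_2|G|$ only weakens the inequality in the right direction.

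The step I expect to be the main obstacle is justifying that a representative of order $> p_1$ (indeed of order $\ge p_2$ or $\ge p_1^2$ as appropriate) can always be found in an irreducible factor $S(I_i)$ not contained in $C_{p_1}^{n_1}$. The subtlety is that $S(I_i)\not\subseteq C_{p_1}^{n_1}$ only says \emph{some} element of $S(I_i)$ lies outside $C_{p_1}^{n_1}$, and a priori such an element could still have order $p_1$ if it lies in a different order-$p_1$ subgroup; but $C_{p_1}^{n_1}$ is precisely the full $p_1$-torsion subgroup $G[p_1]$ of $G$ (writing $G = G_{p_1} \oplus \bigoplus_{i\ge 2} G_{p_i}$ with $G_{p_1} = \bigoplus_j C_{p_1^{e_{1j}}}$, and $G[p_1] \cap G_{p_1} = C_{p_1}^{n_1}$ while $G[p_1]\cap G_{p_i} = 0$ for $i \ge 2$ since $\gcd(p_1,p_i)=1$), so an element outside $C_{p_1}^{n_1}$ has order either divisible by $p_1^2$ or divisible by some $p_i$, $i\ge2$ — and in either case its order exceeds $p_1$ and is $\ge \min\{p_1^2,p_2\}$. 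One then picks $g_i$ to be such an element. Care is needed in the two degenerate cases: when $G$ is a $p_1$-group ($n=1$), "order $\ge p_2$" is vacuous so only order $\ge p_1^2$ is available, matching the third branch; and when the $p_1$-part is elementary ($\max_j e_{1j}=1$) but $n>1$, order divisible by $p_1^2$ is impossible so only order $\ge p_2$ is available, matching the second branch. The hypothesis that $G$ is not an elementary $p_1$-group guarantees at least one of the two escape routes exists, so the bound is never vacuous.
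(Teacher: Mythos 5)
Your proposal is correct and follows essentially the same route as the paper: pick one representative per irreducible factor (of order exactly $p_1$ for the $m_{p_1}$ factors inside $C_{p_1}^{n_1}$, and of order $>p_1$, hence $\ge\min\{p_1^2,p_2\}$, $\ge p_2$, or $\ge p_1^2$ as the case dictates, for the rest), bound the remaining zero-sum free multiset by $k(G)$, and use $m\le\log_2|G|$. The only nit is the citation: the bound $m\le\log_2|G|$ is Proposition \ref{mbound}, not Proposition \ref{plengthbound}.
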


\begin{proof} Let $\bigsqcup_{i=1}^m I_i$ be the irreducible factorization of $S$. For each $1\le i\le m$, choose some $g_i\in S(I_i)$; for each $i$ such that $S(I_i) \not \subset S_{p_1}$, we can choose $g_i$ such that $\ord(g_i) > p_1$,  and note that $\sum_{g_i\in S_{p_1}} k(\{g_i\}) = \frac{m_{p_1}}{p_1}$. Now
\begin{enumerate}
\item if $n>1$ and $\max_{1\le j\le n_1} e_{1j} >1$, we have $\ord(g_i) \ge \min\{p_1^2,p_2\}$,
\item if $n>1$ and $\max_{1\le j\le n_1} e_{1j} =1$, we have $\ord(g_i) \ge p_2$, and
\item if $n=1$ and $\max_{1\le j\le n_1} e_{1j} >1$, we have $\ord(g_i) \ge p_1^2$. 
\end{enumerate}
So by Proposition \ref{ksum}, since $S\setminus S_{p_1}$ has $m-m_{p_1}$ irreducible factors, we have
\[\sum_{g_i\not\in S_{p_1}} k(\{g_i\}) \le 
\begin{cases}
      \frac{(\log_2 |G|)-m_{p_1}}{\min\{p_1^2,p_2\}} & n>1 \; \text{and} \; \max_{1\le j\le n_1} e_{1j} >1\\
      \frac{(\log_2 |G|)-m_{p_1}}{p_2} & n>1 \; \text{and} \; \max_{1\le j \le n_1} e_{1j} = 1\\
      \frac{(\log_2 |G|)-m_{p_1}}{p_1^2} &  n=1\; \text{and}\; \max_{1\le j\le n_1} e_{1j} > 1.
\end{cases}\]

By unique factorization, $\bigsqcup_{i=1}^m S(I_i)\setminus\{g_i\}$ is zero-sum free, and so $k(\bigsqcup_{i=1}^m S(I_i)\setminus\{g_i\})\le k(G)$. This, combined with the above, gives the desired conclusion.
\end{proof}

For a given $c\in \mathbb{R}_{\ge 1}$, we define the following subset of the set of finite abelian groups $G$: 
$$\Omega_c : = \{G : P^+(|G|) \le c\cdot P^-(|G|)\}.$$
For a given finite abelian group $G = \bigoplus_{i=1}^n\bigoplus_{j=1}^{n_i} C_{p_i^{e_{ij}}}$, define
$$\mathcal{S}_N := \left\{G = \bigoplus_{i=1}^n \bigoplus_{j=1}^{n_i} C_{p_i^{e_{ij}}} : \sum_{i=1}^n\sum_{j=1}^{n_i} e_{ij} \le N\right\}.$$
Note that $\mathcal{S}_N$ consists of those finite abelian groups $G$ such that the number of prime divisors of $|G|$ counted with multiplicity is at most $N$.

\begin{proposition} \label{pbound} Suppose $c,N\in\mathbb{R}_{\ge 1}$ and $G = \bigoplus_{i=1}^n\bigoplus_{j=1}^{n_i} C_{p_i^{e_{ij}}}\in \Omega_c\cap \mathcal{S}_N$ with $k(G) = k^*(G), \;p_1< \cdots < p_n$ and $p_1^2<p_2$ if $n>1$, $\max_{1\le j\le n_1} e_{1j} >1$, and $p_1$ large enough so that $\frac{\log_2 cp_1}{p_1}N \le \frac{1}{c}K_1^*(G)$. Then given a UFIM $S$ over $G\setminus\{0\}$, let $m_{p_i}$ be as in the statement of Lemma \ref{lowestorder}, and we have
$$k(S) \le K_1^*(G) + \frac{m_{p_1}}{p_1}\left(1-\frac{1}{p_1}\right).$$
In particular, if $n=1$ (i.e., $G$ is a $p$-group), $k(G) = k^*(G)$ by Remark \ref{kremark}, and so taking $c=1$, $G$ satisfies the above inequality.
\end{proposition}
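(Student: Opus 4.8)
\textbf{Proof plan for Proposition \ref{pbound}.}

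The plan is to combine Lemma \ref{lowestorder} with Proposition \ref{plengthbound} (the statement that few irreducible factors forces $k(S) \le K_1^*(G)$), splitting on whether $m_{p_1}$ is ``small'' or ``large.'' First I would dispose of the case where $S$ has few irreducible factors overall: let $\bigsqcup_{i=1}^m I_i$ be the irreducible factorization of $S$. By Proposition \ref{factorproduct} we have $m \le \log_2 |G| \le \log_2(cp_1)^N$ — wait, more carefully, since $G \in \mathcal{S}_N$ we have $|G| \le P^+(|G|)^N \le (cp_1)^N$, so $m \le \log_2|G| \le N\log_2(cp_1)$. Combining with the hypothesis $\frac{\log_2 cp_1}{p_1}N \le \frac1c K_1^*(G)$ and $c \ge 1$, this gives $m \le p_1 K_1^*(G)$. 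I would then want to invoke Proposition \ref{plengthbound}, which requires $m \le \sum_{i,j} \frac{P^-(|G|)}{p_i} K_1^*(C_{p_i^{e_{ij}}}) = \sum_{i,j}\frac{p_1}{p_i}K_1^*(C_{p_i^{e_{ij}}})$; the issue is that this sum is smaller than $p_1 K_1^*(G)$ when $n > 1$, so a direct application does not work when $m_{p_1}$ is large.

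The cleaner route is to apply Lemma \ref{lowestorder} directly and bound the error terms using the $p$ being large. In each of the three cases of Lemma \ref{lowestorder}, the bound reads $k(S) \le k(G) + \frac{(\log_2|G|) - m_{p_1}}{D} + \frac{m_{p_1}}{p_1}$ where $D \in \{\min\{p_1^2,p_2\}, p_2, p_1^2\}$; under the hypothesis $p_1^2 < p_2$ (when $n>1$ and $\max e_{1j} > 1$), the first case has $D = p_1^2$, so in all cases relevant here $D \ge p_1^2$ (in the $n>1$, $\max e_{1j} = 1$ case $D = p_2 > p_1 > p_1^2/p_1$, but I need $D \ge$ something; actually there $p_2 > p_1$ only gives $D > p_1$, so I should handle that subcase by noting $\frac{\log_2|G|}{p_2} \le \frac{\log_2|G|}{p_1} \cdot \frac{p_1}{p_2}$ and push through — but since the Proposition's hypothesis explicitly includes ``$p_1^2 < p_2$ if $n>1$, $\max_{1\le j\le n_1}e_{1j} > 1$,'' I believe the intended regime makes the relevant denominator at least $p_1^2$, or the elementary $p_1$-group subcase is to be treated with $D = p_2 > p_1^2$ forced by monotonicity of primes beyond... no). Let me restructure: I would write $\frac{(\log_2|G|) - m_{p_1}}{D} \le \frac{\log_2|G|}{p_1^2}$ in the applicable cases, then use $\log_2|G| \le N\log_2(cp_1)$ together with the hypothesis to get $\frac{\log_2|G|}{p_1^2} \le \frac{1}{p_1}\cdot\frac{N\log_2(cp_1)}{p_1} \le \frac{1}{p_1}\cdot\frac{1}{c}K_1^*(G) \le \frac{K_1^*(G)}{p_1}$, and similarly $\frac{m_{p_1}}{p_1} = \frac{m_{p_1}}{p_1^2}\cdot p_1$... hmm, the $\frac{m_{p_1}}{p_1}$ term and the target $\frac{m_{p_1}}{p_1}(1-\frac1{p_1})$ differ by $\frac{m_{p_1}}{p_1^2}$, so I need to absorb the $-m_{p_1}/D$ contribution to account for this discrepancy, i.e. show $\frac{\log_2|G| - m_{p_1}}{D} + \frac{m_{p_1}}{p_1} \le K_1^*(G) - k(G) + \frac{m_{p_1}}{p_1} - \frac{m_{p_1}}{p_1^2}$. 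Since $D \ge p_1^2$, we have $\frac{\log_2|G| - m_{p_1}}{D} \le \frac{\log_2|G|}{p_1^2} - \frac{m_{p_1}}{p_1^2}$... no wait, only if $\log_2|G| - m_{p_1} \ge 0$, which holds since $m_{p_1} \le m \le \log_2|G|$. So $\frac{\log_2|G| - m_{p_1}}{D} \le \frac{\log_2|G|}{D} - \frac{m_{p_1}}{D} \le \frac{\log_2|G|}{p_1^2} - \frac{m_{p_1}}{p_1^2}$ only needs $D \le p_1^2$ for the second inequality and $D \ge p_1^2$... these conflict unless $D = p_1^2$.

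So the correct move, which I would carry out, is: in each case write $\frac{(\log_2|G|) - m_{p_1}}{D}$ and observe that since $m_{p_1} \le \log_2 |G|$ and $D \ge p_1^2$ (guaranteed in cases (1) and (3) by the hypothesis $p_1^2 < p_2$, and in case (2) where $D=p_2$ I use $p_2 > p_1$ and handle the leftover more crudely, or note the Proposition as stated may only need the bound with $D$ replaced by $p_1$ in that subcase giving a weaker but still sufficient $\frac{m_{p_1}}{p_1}(1 - \frac1{p_1})$ only if... ) — I expect the main obstacle is exactly this bookkeeping across the three cases of Lemma \ref{lowestorder} and verifying that in each the hypothesis $\frac{\log_2 cp_1}{p_1}N \le \frac1c K_1^*(G)$ is strong enough. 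Concretely: $\frac{(\log_2|G|)-m_{p_1}}{D} + \frac{m_{p_1}}{p_1} \le \frac{\log_2|G|}{D} + \frac{m_{p_1}}{p_1}(1 - \frac{1}{p_1})$ provided $\frac{m_{p_1}}{p_1^2} \le \frac{m_{p_1}}{D}$, i.e. $D \le p_1^2$. This holds in case (1) (where $D = \min\{p_1^2,p_2\} \le p_1^2$) and case (3) ($D = p_1^2$); in case (2), $D = p_2$ which may exceed $p_1^2$, but then one instead bounds $\frac{(\log_2|G|) - m_{p_1}}{p_2} + \frac{m_{p_1}}{p_1} \le \frac{\log_2|G|}{p_1} + \frac{m_{p_1}}{p_1}(1-\frac1{p_1}) - \frac{m_{p_1}}{p_1} + \frac{m_{p_1}}{p_1}$... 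I would just bound the first term by $\frac{\log_2|G|}{p_1}$ which is too big — so in case (2) I would need $\frac{\log_2 |G|}{p_1} \le \frac1c K_1^*(G)$, which is weaker than the given hypothesis only if $c$ helps; since $N \ge 1$ and $c \ge 1$ the given hypothesis $\frac{\log_2(cp_1)}{p_1}N \le \frac1c K_1^*(G)$ does imply $\frac{\log_2|G|}{p_1} \le \frac{N\log_2(cp_1)}{p_1} \le \frac1c K_1^*(G) \le K_1^*(G)$. Good — so in every case $\frac{\log_2|G|}{D} \le \frac{\log_2|G|}{p_1^2} \le \frac{K_1^*(G)}{p_1} \cdot \frac1c$ (when $D \ge p_1^2$) or at worst $\frac{\log_2|G|}{p_1}$; reconciling with the target requires that in case (2) the denominator $p_2$ still satisfies $p_2 \ge p_1^2$ — but the Proposition's hypothesis only imposes $p_1^2 < p_2$ in the case $\max e_{1j} > 1$, and case (2) is precisely $\max e_{1j} = 1$. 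I would therefore, in writing the final proof, either (a) note that case (2) with the elementary $p_1$-part still has $p_2 > p_1 \ge $ enough since then $K_1^*$ is correspondingly larger, or (b) restrict attention as the Proposition does. Assembling: $k(S) \le k(G) + \frac{\log_2|G|}{p_1^2} - \frac{m_{p_1}}{p_1^2} + \frac{m_{p_1}}{p_1} \le k(G) + \frac1{p_1}K_1^*(G) + \frac{m_{p_1}}{p_1}(1 - \frac1{p_1})$, and finally I use $k(G) = k^*(G)$ together with $K_1^*(G) - k^*(G) = \sum_{i,j}\frac1{p_i}K_1^*(C_{p_i^{e_{ij}}}) \ge \frac1{p_1}K_1^*(G) \cdot$ (correction: $\sum_{i,j}\frac1{p_i}K_1^*(C_{p_i^{e_{ij}}}) \ge \frac1{P^+(|G|)}K_1^*(G) \ge \frac1{cp_1}K_1^*(G)$), so $k(G) + \frac1{p_1}K_1^*(G)$ — I need this $\le K_1^*(G)$, i.e. $\frac1{p_1}K_1^*(G) \le K_1^*(G) - k^*(G) = \sum \frac1{p_i}K_1^*(C_{p_i^{e_{ij}}})$, which since each $p_i \le p_n \le cp_1$... gives $\sum\frac1{p_i}K_1^*(C_{p_i^{e_{ij}}}) \ge \frac1{cp_1}K_1^*(G)$, not quite $\frac1{p_1}$. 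The resolution is that I should have gotten $\frac{\log_2|G|}{p_1^2} \le \frac{1}{p_1}\cdot\frac{N\log_2(cp_1)}{p_1} \le \frac1{p_1}\cdot\frac1c K_1^*(G) \le \frac1{cp_1}K_1^*(G) \le K_1^*(G) - k^*(G)$, using the hypothesis with the extra factor $\frac1c$ — that is exactly why the hypothesis carries $\frac1c$ on the right. So the chain closes: $k(S) \le k^*(G) + (K_1^*(G) - k^*(G)) + \frac{m_{p_1}}{p_1}(1-\frac1{p_1}) = K_1^*(G) + \frac{m_{p_1}}{p_1}(1-\frac1{p_1})$, as desired. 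The $p$-group case $n=1$ then follows by taking $c = 1$ and recalling Remark \ref{kremark} that $K(G) = K^*(G)$ (known for $p$-groups by Theorem \ref{Kbound}) forces $k(G) = k^*(G)$.
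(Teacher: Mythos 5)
Your final assembled chain is precisely the paper's proof: apply Lemma \ref{lowestorder}, bound $\log_2|G|\le N\log_2(cp_1)$ using $G\in\Omega_c\cap\mathcal{S}_N$, and absorb the error via $\frac{\log_2|G|}{p_1^2}\le\frac{1}{p_1}\cdot\frac{1}{c}K_1^*(G)\le K_1^*(G)-k^*(G)=K_1^*(G)-k(G)$, leaving the $\frac{m_{p_1}}{p_1}\left(1-\frac{1}{p_1}\right)$ term. Your lengthy worry about case (2) of Lemma \ref{lowestorder} is moot, since the hypothesis $\max_{1\le j\le n_1}e_{1j}>1$ is unconditional in the Proposition, so only cases (1) and (3) arise and the relevant denominator is always $p_1^2$ (using $p_1^2<p_2$ when $n>1$), exactly as the paper implicitly assumes.
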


\begin{proof}
By Lemma \ref{lowestorder} and since $G\in \Omega_c\cap \mathcal{S}_N$, we have 
$$k(S) \le k(G) + \frac{\log_2 cp_1}{p_1^2}\sum_{i=1}^n\sum_{j=1}^{n_i} e_{ij} -\frac{m_{p_1}}{p_1^2} + \frac{m_{p_1}}{p_1}\le k(G) + \frac{\log_2 cp_1}{p_1^2}N + \frac{m_{p_1}}{p_1}\left(1-\frac{1}{p_1}\right).$$
Now since we assume $k(G) = k^*(G)$ and $G\in \mathcal{S}_N$,
$$K_1^*(G) - k(G) = K_1^*(G) - k^*(G) = \sum_{i=1}^n\sum_{j=1}^{n_i} \frac{1}{p_i}K_1^*(C_{p_i^{e_{ij}}}) \ge\sum_{i=1}^n\sum_{j=1}^{n_i} \frac{1}{cp_1}K_1^*(C_{p_i^{e_{ij}}}) = \frac{1}{cp_1}K_1^*(G).$$
So now for all $p_1$ large enough so that $\frac{\log_2 cp_1}{p_1}N \le \frac{1}{c}K_1^*(G)$, by the above we have
$$k(S) \le k(G) +  \frac{1}{cp_1}K_1^*(G) + \frac{m_{p_1}}{p_1}\left(1-\frac{1}{p_1}\right) \le K_1^*(G) + \frac{m_{p_1}}{p_1}\left(1-\frac{1}{p_1}\right).$$
\end{proof}

\begin{corollary} \label{pbehavior} For any $c,N\in\mathbb{R}_{\ge 1}$ and any finite abelian group $G = \bigoplus_{i=1}^n\bigoplus_{j=1}^{n_i} C_{p_i^{e_{ij}}}\in \Omega_c\cap \mathcal{S}_N$ with $k(G) = k^*(G),\; p_1< \cdots < p_n$ and $p_1^2<p_2$ if $n>1$, $\max_{1\le j\le n_1} e_{1j} >1$, and $p_1$ large enough so that $\frac{\log_2 cp_1}{p_1}N \le \frac{1}{c}K_1^*(G)$, we have
$$K_1(G) \le K_1^*(G) + n_1\frac{\log_2 p_1}{p_1}\left(1-\frac{1}{p_1}\right).$$
In particular, if $n=1$, $k(G) = k^*(G)$ by Remark \ref{kremark}, so $G$ satisfies this inequality with $c=1$.
\end{corollary}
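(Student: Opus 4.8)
The plan is to derive Corollary~\ref{pbehavior} directly from Proposition~\ref{pbound} by supplying a bound on $m_{p_1}$ that is independent of the particular UFIM, and then passing to the maximum in the definition of $K_1(G)$.

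First I would fix an arbitrary UFIM $S$ over $G\setminus\{0\}$. The hypotheses of the corollary are exactly those of Proposition~\ref{pbound}, so that proposition gives
$$k(S) \le K_1^*(G) + \frac{m_{p_1}}{p_1}\left(1 - \frac{1}{p_1}\right),$$
where, as in Lemma~\ref{lowestorder}, $m_{p_1}$ denotes the number of irreducible factors of the submultiset $S_{p_1}\subseteq S$ formed by the union of those irreducible factors of $S$ whose elements all lie in $C_{p_1}^{n_1}$.

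Next I would bound $m_{p_1}$ independently of $S$. By Remark~\ref{factor}, $S_{p_1}$ (being a union of irreducible factors of $S$, hence zero-sum) is a UFIM over $G\setminus\{0\}$; since all its elements lie in $C_{p_1}^{n_1}$, it is a UFIM over $C_{p_1}^{n_1}\setminus\{0\}$ whose unique factorization consists of precisely its $m_{p_1}$ constituent minimal zero-sum factors (minimality and the zero-sum condition do not depend on the ambient group). Applying Proposition~\ref{mbound} to the group $C_{p_1}^{n_1}$ then yields $m_{p_1}\le \log_2|C_{p_1}^{n_1}| = n_1\log_2 p_1$, the case $S_{p_1}=\emptyset$ being immediate.

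Finally, since $p_1\ge 2$ the factor $\frac{1}{p_1}\left(1-\frac{1}{p_1}\right)$ is positive, so replacing $m_{p_1}$ by the larger quantity $n_1\log_2 p_1$ only weakens the bound; thus $k(S)\le K_1^*(G) + n_1\frac{\log_2 p_1}{p_1}\left(1-\frac{1}{p_1}\right)$ for every UFIM $S$ over $G\setminus\{0\}$, and taking the maximum over all such $S$ gives the claimed inequality for $K_1(G)$. For the final sentence, when $n=1$ the group is a $p_1$-group, so $k(G)=k^*(G)$ by Remark~\ref{kremark} and the auxiliary conditions (namely $P^+(|G|)\le c\cdot P^-(|G|)$ and $p_1^2<p_2$) are automatic or vacuous, so the argument applies with $c=1$. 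There is no real obstacle here: all the substance is already contained in Propositions~\ref{pbound} and~\ref{mbound}, and the only points to watch are that the bound on $m_{p_1}$ be uniform in $S$ (so the maximum defining $K_1(G)$ may be taken) and that the multiplier in front of $m_{p_1}$ be nonnegative.
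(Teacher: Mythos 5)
Your proof is correct and follows essentially the same route as the paper: apply Proposition~\ref{pbound} to an arbitrary UFIM $S$, bound $m_{p_1}\le n_1\log_2 p_1$ via Proposition~\ref{mbound} applied to $S_{p_1}$ as a UFIM over $C_{p_1}^{n_1}\setminus\{0\}$, and take the maximum over $S$. The extra justifications you supply (that $S_{p_1}$ is a UFIM over $C_{p_1}^{n_1}$, and that the coefficient of $m_{p_1}$ is positive) are sound and merely make explicit what the paper leaves implicit.
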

\begin{proof} 
By Proposition \ref{mbound}, we have $m_{p_1} \le \log_2 |C_{p_1}^{n_1}| = n_1\log_2 p_1$, and so invoking Proposition \ref{pbound}, we have 
$$k(S) \le K_1^*(G) + \frac{m_{p_1}}{p_1}\left(1-\frac{1}{p_1}\right) \le K_1^*(G) + n_1\frac{\log_2 p_1}{p_1}\left(1-\frac{1}{p_1}\right)$$
for any UFIM $S$ over $G\setminus\{0\}$, and so the conclusion follows. 
\end{proof}

\begin{corollary}\label{noporder} Suppose $c,N\in\mathbb{R}_{\ge 1}$ and $G = \bigoplus_{i=1}^n\bigoplus_{j=1}^{n_i} C_{p_i^{e_{ij}}}\in \Omega_c\cap \mathcal{S}_N$ with $k(G) = k^*(G), \; p_1< \cdots < p_n$ and $p_1^2<p_2$ if $n>1$, $\max_{1\le j\le n_1} e_{1j} >1$, and $p_1$ large enough so that  $\frac{\log_2 cp_1}{p_1}N \le \frac{1}{c}K_1^*(G)$. Then any UFIM $S$ over $G\setminus\{0\}$ with irreducible factorization $\bigsqcup_{i=1}^m I_i$ such that for all $1\le i\le m$, $S(I_i)$ contains an element outside of $C_{p_1}^{n_1}$, satisfies
$$k(S) \le K_1^*(G).$$
In particular, if $n=1$, $k(G) = k^*(G)$ by Remark \ref{kremark}, so $G$ satisfies this inequality with $c=1$.
\end{corollary}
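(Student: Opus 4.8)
The plan is to deduce the statement immediately from Proposition \ref{pbound}; the only content is to recognize that the combinatorial hypothesis on $S$ is exactly the assertion that the quantity $m_{p_1}$ appearing in that proposition vanishes.

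First I would recall the definitions feeding into Proposition \ref{pbound}: by Lemma \ref{lowestorder}, $S_{p_1}$ is the union of all irreducible factors of $S$ whose elements all lie in $C_{p_1}^{n_1}$, and $m_{p_1}$ is the number of such factors. The hypothesis of the Corollary --- that for every $1 \le i \le m$ the factor $S(I_i)$ contains an element outside $C_{p_1}^{n_1}$ --- says precisely that no irreducible factor of $S$ is contained in $C_{p_1}^{n_1}$, i.e. $S_{p_1} = \emptyset$, and hence $m_{p_1} = 0$. (The degenerate case $S_{p_1} = \emptyset$ is explicitly permitted in the statement of Lemma \ref{lowestorder}, so there is nothing to check here.)

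Next I would invoke Proposition \ref{pbound} directly: all of its hypotheses are among those assumed in the Corollary, namely $G = \bigoplus_{i,j} C_{p_i^{e_{ij}}} \in \Omega_c \cap \mathcal{S}_N$, $k(G) = k^*(G)$, $p_1 < \cdots < p_n$, $p_1^2 < p_2$ when $n > 1$ and $\max_{1\le j\le n_1} e_{1j} > 1$, and $p_1$ large enough that $\frac{\log_2 cp_1}{p_1} N \le \frac{1}{c} K_1^*(G)$. Proposition \ref{pbound} then yields, for the given UFIM $S$,
$$k(S) \le K_1^*(G) + \frac{m_{p_1}}{p_1}\left(1 - \frac{1}{p_1}\right) = K_1^*(G),$$
using $m_{p_1} = 0$. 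For the $p$-group case $n = 1$, the hypothesis $k(G) = k^*(G)$ holds automatically by Remark \ref{kremark}, and taking $c = 1$ makes the remaining conditions vacuous, so the same argument applies.

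There is no genuine obstacle: the result is a repackaging of Proposition \ref{pbound}, and the only point requiring (minimal) care is the translation ``every irreducible factor meets $G \setminus C_{p_1}^{n_1}$'' $\Longleftrightarrow$ ``$m_{p_1} = 0$'', together with verifying that the hypothesis list of Proposition \ref{pbound} is reproduced verbatim in the Corollary.
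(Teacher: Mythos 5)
Your proposal is correct and follows exactly the paper's argument: the hypothesis that every irreducible factor meets $G\setminus C_{p_1}^{n_1}$ forces $m_{p_1}=0$, and substituting this into the bound of Proposition \ref{pbound} gives $k(S)\le K_1^*(G)$. No further comment is needed.
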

\begin{proof} The conditions imply $m_{p_1} = 0$; plug this into the inequality provided by Proposition \ref{pbound}.
\end{proof}

We may observe from Corollary \ref{plengthbound} and Corollary \ref{noporder} that to study Conjecture \ref{K1conjecture} for the classes of groups specified in Proposition \ref{pbound}, we essentially only need to look at UFIMs $S$ with strictly greater than $K_1^*(G)$ irreducible factors, and such that some irreducible factor contains only elements of order $P^-(|G|)$. Note also that by Proposition \ref{mbound} the number of irreducible factors which contain only elements of order $P^-(|G|)$ is bounded above by $\log_2 |C_{P^-(|G|)}^r| = r \log_2 P^-(|G|)$, where $r$ is the rank of $G$ as defined in Section \ref{introduction}. 

\section{Second Main Result}\label{2pq}
\label{pqr}
We can now prove Theorem \ref{mainthm2} and Corollary \ref{secondmaincorollary}. 

\begin{lemma} \label{roplus} Suppose $G$ is a finite abelian group with $K_1(G) = K_1^*(G)$ and $r\in\{2,3\}$ is such that $r \nmid |G|$. Given a UFIM $S$ over $G\setminus\{0\}$, let $m_r$ be as in Lemma \ref{lowestorder}. Then in the notation of Construction \ref{construction} with respect to the projection $\phi : C_r\oplus G \rightarrow G$, either
\begin{enumerate}
\item $k(S) \le K_1^*(C_r\oplus G)$ and $t=0$, or 
\item $m_r = 0$. 
\end{enumerate}
\end{lemma}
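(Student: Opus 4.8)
The plan is to prove the contrapositive form of the dichotomy: assuming alternative (2) fails, i.e.\ $m_r\geq 1$, I will establish alternative (1), namely $t=0$ together with $k(S)\leq K_1^*(C_r\oplus G)$. So suppose $m_r\geq 1$ throughout.

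First I would set up Construction \ref{construction} for the projection $\phi: C_r\oplus G\to G$, whose kernel is the $C_r$ summand, obtaining $S=T\sqcup S''\sqcup\bigsqcup_{i=1}^t S_i$ with $T=T(\phi)$. Since $m_r\geq 1$, there is (by the definition of $m_r$ in Lemma \ref{lowestorder}) an irreducible factor $S(I_0)$ of $S$ all of whose elements lie in the $C_r$ summand. Being an irreducible factor of $S$, this $S(I_0)$ is a nonempty minimal zero-sum indexed multiset over $C_r\setminus\{0\}$; and since all its elements lie in $\ker(\phi)$, we get $S(I_0)\subseteq T$.

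The key step — and essentially the only one requiring care — is to pin down the multiset $W:=T\sqcup\bigsqcup_{i=1}^t\{\sigma(S_i)\}$, which by Proposition \ref{phiunique} is a UFIM over $\ker(\phi)\setminus\{0\}=C_r\setminus\{0\}$. Because $r\in\{2,3\}$, Proposition \ref{mbound} bounds the number of irreducible factors of any UFIM over $C_r\setminus\{0\}$ by $\log_2|C_r|=\log_2 r<2$, hence by $1$. Now $S(I_0)$ is a zero-sum submultiset of $W$, so by Remark \ref{factor} it is a union of irreducible factors of $W$; being nonempty, and $W$ having at most one irreducible factor, $S(I_0)$ must equal that single factor, so $W=S(I_0)$. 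Comparing cardinalities gives $|T|+t=|W|=|S(I_0)|\leq|T|$, the final inequality because $S(I_0)\subseteq T$; therefore $t=0$.

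Finally, with $t=0$ in hand, Proposition \ref{push} (\ref{push7}) gives $k(S)\leq K_1(\ker(\phi))+K_1(G/\ker(\phi))=K_1(C_r)+K_1(G)$. Since $K_1(C_r)=1=K_1^*(C_r)$ by Theorem \ref{GaoWang} and $K_1(G)=K_1^*(G)$ by hypothesis, additivity of $K_1^*$ over direct sums yields $k(S)\leq K_1^*(C_r)+K_1^*(G)=K_1^*(C_r\oplus G)$. Together with $t=0$ this is exactly alternative (1). I do not expect a genuine obstacle; the one thing to watch is the cardinality squeeze, in particular that $W\neq\emptyset$ (immediate, since $\emptyset\neq S(I_0)\subseteq W$), so that ``$S(I_0)$ is a nonempty union of irreducible factors of $W$'' really does force $S(I_0)=W$.
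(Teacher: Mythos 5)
Your proof is correct, and while it shares the paper's overall skeleton (Construction \ref{construction} applied to $\phi:C_r\oplus G\to G$, the fact that $W=T\sqcup\bigsqcup_{i=1}^t\{\sigma(S_i)\}$ is a UFIM over $C_r\setminus\{0\}$, and Proposition \ref{push}(\ref{push7}) once $t=0$ is known), the combinatorial core is genuinely different. The paper argues in the direction ``$t\ge 1$ implies $m_r=0$'': it bounds $|T|+t\le N_1(C_r)=r$ via Proposition \ref{phibound2}(\ref{phiunique2}) and Theorem \ref{N1}, and then splits into cases on $r$ and $t$; in particular for $r=3$, $t=1$ it must explicitly enumerate the zero-sum indexed multisets over $C_3\setminus\{0\}$ of length at most $3$ to conclude that $T$ is zero-sum free and hence contains no irreducible factor. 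You instead argue ``$m_r\ge 1$ implies $t=0$'': since $\log_2 r<2$, Proposition \ref{mbound} gives that the UFIM $W$ has exactly one irreducible factor, so by Remark \ref{factor} the nonempty zero-sum submultiset $S(I_0)\subseteq T\subseteq W$ must be all of $W$, and the cardinality squeeze $|T|+t=|S(I_0)|\le|T|$ kills $t$. Your version is uniform in $r\in\{2,3\}$ and avoids both the Narkiewicz constant and the enumeration; what it costs is nothing beyond what the paper already has available (Proposition \ref{mbound} and Remark \ref{factor} are both established earlier), so it is a clean simplification. Both arguments use $r\in\{2,3\}$ essentially — yours through $\log_2 r<2$, the paper's through $N_1(C_r)=r\le 3$ — and neither extends directly to larger $r$.
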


\begin{proof} Suppose $r = 2$. In the notation of Construction \ref{construction} with respect to $\phi : C_r \oplus G \rightarrow G$, by Proposition \ref{phibound2} (\ref{phiunique2}) and Theorem \ref{N1} we have $|T| + t \le N_1(\ker(\phi)) = N_1(C_2) = 2$. If $t=0$, then by Proposition \ref{push} (\ref{push7}) and Theorem \ref{GaoWang}, $k(S) \le K_1(C_2) + K_1(G) = K_1^*(C_2) + K_1^*(G) = K_1^*(C_2\oplus G)$, where the first equality follows from Theorem \ref{GaoWang} and our assumption $K_1(G) = K_1^*(G)$. Hence we may assume that $t\ge 1$, which implies, by the above, $|T| \le N_1(\ker(\phi)) - t = 2 - t \le 1$. But any irreducible factor has length at least 2, so $m_2 = 0$. 

Suppose $r = 3$. As above, we have $|T| + t\le N_1(\ker(\phi)) = N_1(C_3) = 3$. If $t=0$, by Proposition \ref{push} (\ref{push7}), we have $k(S) \le K_1(C_3) + K_1(G) = K_1^*(C_3) + K_1^*(G) = K_1^*(C_3\oplus G)$ where the first equality follows from Theorem \ref{GaoWang} and our assumption $K_1(G) = K_1^*(G)$. So we may assume that $t\ge 1$. Hence $1\le t \le 3$. If $t\ge 2$, we have by the above that $|T| \le 1$, and so since any irreducible factor has length at least 2, so $|T| \le 1$ which implies $m_2 = 0$. If $t=1$, then we have $|T|\le 2$. By Proposition \ref{phiunique}, $T\sqcup \{\sigma(S_1)\}$ is a UFIM and hence zero-sum over $C_3\setminus\{0\}$. Since the only zero-sum indexed multisets over $C_3\setminus\{0\}$  of length at most 3 are $\{1,2\}$, $\{1,1,1\}$ and $\{2,2,2\}$, we have $T = \{(1,0,0)\}, \{(2,0,0)\}, \{(1,0,0),(1,0,0)\}$ or $\{(2,0,0),(2,0,0)\}$. Thus $T$ is zero-sum free and so is properly contained in an irreducible factor. Since by definition $T$ contains all order-3 elements in $S$, we have that $m_3 = 0$. 
\end{proof}

\begin{proof}[Proof of Theorem \ref{mainthm2}]
Take any UFIM $S$ over $(C_r\oplus G)\setminus\{0\}$. By Lemma \ref{roplus}, if $m_r\neq 0$, we have $k(S) \ge K_1^*(C_r\oplus G)$. So now assume $m_r = 0$. By Lemma \ref{lowestorder}, we have 
$$k(S)\le  k(C_r\oplus G) + \frac{\log_2 |C_r\oplus G|}{p_1}\le  k(C_r\oplus G) + \frac{\log_2(rc^{(\sum_{i=2}^{n} \sum_{j=1}^{n_i} e_{ij})} p_1^{(\sum_{i=1}^n \sum_{j=1}^{n_i} e_{ij})})}{p_1}.$$
Now since by assumption
$$k(C_r\oplus G) = k^*(C_r \oplus G) = 1 - \frac{1}{r} + \sum_{i=1}^n n_i - \sum_{i=1}^n \sum_{j=1}^{n_i} \frac{1}{p_i^{e_{ij}}},$$
we have for $p_1$ satisfying the constraint given in our statement
\begin{align*}
k(S) &\le  1 - \frac{1}{r} + \sum_{i=1}^n n_i - \sum_{i=1}^n \sum_{j=1}^{n_i} \frac{1}{p_i^{e_{ij}}} + \frac{\log_2 (rc^{(\sum_{i=2}^{n} \sum_{j=1}^{n_i} e_{ij})} p_1^{(\sum_{i=1}^n \sum_{j=1}^{n_i} e_{ij})})}{p_1}
\\ & \le K_1^*(C_r\oplus G) - \frac{1}{r} + \frac{\log_2 (rc^{(\sum_{i=2}^{n} \sum_{j=1}^{n_i} e_{ij})} p_1^{(\sum_{i=1}^n \sum_{j=1}^{n_i} e_{ij})})}{p_1} 
\\ &- \left( \frac{1}{p_1}K_1^*\left(\bigoplus_{j=1}^{n_1} C_{p_1^{e_{1j}}}\right) + \sum_{i=2}^n \sum_{j=1}^{n_i} \frac{(cp_1)^{e_{ij}}-1}{(cp_1)^{e_{ij}+1} - (cp_1)^{e_{ij}}}\right)\le K_1^*(C_r\oplus G)\\
\end{align*}
with equality only if we have equality in the constraint for $p_1$. Hence $K_1(C_r\oplus G) \le K_1^*(C_r\oplus G)$, and so $K_1(C_r\oplus G) = K_1^*(C_r \oplus G)$ by Proposition \ref{lowerbound}.

Now consider the case where equality does not hold in the constraint for $p_1$ in our statement. The above argument shows that if $m_r = 0$, then $k(S) < K_1(C_r\oplus G)$. Hence if $S$ is such that $k(S) = K_1(C_r\oplus G)$, then $m_r\neq 0$, and so by Lemma \ref{roplus}, $t=0$. Thus $S$, by Remark \ref{crossterms}, each element of $S$ must belong to either $C_r$ or $C_{pq}$, and we may split $S$ into a disjoint union $S_r\sqcup S_G$ where $S_r$ is a UFIM over $C_r\setminus\{0\}$ and $S_G$ is a UFIM over $G\setminus\{0\}$.
\end{proof}

\begin{proof}[Proof of Corollary \ref{secondmaincorollary}] All the families described are covered in Proposition \ref{Kbound}, so by Remark \ref{kremark}, $k(C_r\oplus G) = k^*(C_r \oplus G)$ for the above $G$. Moreover, for the above $G$ we have $K_1(G) = K_1^*(G)$ by Theorem \ref{mainthm}. Hence the hypotheses of Theorem \ref{mainthm2} are satisfied for these $G$, and so the first part of the statement follows. The second part follows directly from Theorem \ref{mainthm2}.
\end{proof}

\begin{remark}
Note that for $r =2 ,3$ for $S$ over $C_{rpq}\setminus\{0\}$ for $p,q$ satisfying the conditions on $p$ and $q$ in the statement of Corollary \ref{secondmaincorollary}, any UFIM $S$ which achieves maximal cross number must have a decomposition $S_r\sqcup S_{pq}$ where $S_r$ is a UFIM over $C_r\setminus\{0\}$ and $S_{pq}$ is a UFIM over $C_{pq}\setminus\{0\}$. Hence $S$ achieves maximal cross number if and only if $S_r$ and $S_{pq}$ achieve maximal cross number. By Remark \ref{maximalstructurepq}, if $S_{pq}$ achieves maximal cross number, then it has a decomposition $S_p\sqcup S_q$, where $S_p$ is a UFIM over $C_p\setminus\{0\}$ and $S_q$ is a UFIM over $C_q\setminus\{0\}$, and so $S$ has a decomposition $S_r \sqcup S_p \sqcup S_q$.
\end{remark}

\section{Bounds on $K_1(G)$ and Asymptotic Results}
\label{secbound}
In this section, we prove some general bounds on $K_1(G)$. As a result, we show that $K_1(G),\; k(G)$ and $K_1^*(G)$ all become arbitrarily close to each other in a certain limit. We hope these results along with those of Section \ref{structuralresults} will be helpful in proving (or disproving) Conjecture \ref{K1conjecture} for further families of groups.

Gao and Wang give the following general bound for $K_1(G)$.
\begin{proposition}[\cite{GaoWang}] \label{GaoWangbound} For any finite abelian group $G$, let $|G|$ denote the order of $G$ and let $p$ be the smallest prime dividing $G$. Then we have
$$K_1(G) \le \log|G| +\frac{\log_2 |G|}{p}.$$
\end{proposition}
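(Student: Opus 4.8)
The statement to prove is Proposition \ref{GaoWangbound}: $K_1(G) \le \log|G| + \frac{\log_2|G|}{p}$ where $p = P^-(|G|)$. The plan is to take an arbitrary UFIM $S$ over $G\setminus\{0\}$ with irreducible factorization $\bigsqcup_{i=1}^m I_i$ and bound $k(S)$ by splitting the contribution from one distinguished element in each irreducible factor from the contribution of the rest. Concretely, for each $1\le i\le m$ choose some $g_i\in S(I_i)$; then write
$$k(S) = k\!\left(\bigsqcup_{i=1}^m S(I_i)\setminus\{g_i\}\right) + \sum_{i=1}^m k(\{g_i\}).$$
The first summand is the cross number of a zero-sum free indexed multiset (zero-sum freeness follows from unique factorization, as used in the proof of Lemma \ref{lowestorder} and Proposition \ref{sizelimit}), hence is at most $k(G)$. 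The second summand is at most $\frac{m}{p}$ by Proposition \ref{ksum}, and $m\le \log_2|G|$ by the same proposition, so the second summand is at most $\frac{\log_2|G|}{p}$. Thus $k(S) \le k(G) + \frac{\log_2|G|}{p}$.

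It then remains to bound $k(G)$ itself by $\log|G|$. The cleanest route is: a zero-sum free indexed multiset $T$ over $G\setminus\{0\}$ of length $\ell$ generates at least $\ell+1$ distinct subset sums (the subset sums of an zero-sum free set are all distinct and nonzero together with $0$, but more simply one uses the standard fact that $|\{\sigma(T') : T'\subseteq T\}| \ge |T|+1$), so $D(G) - 1 \ge$ the maximum length of a zero-sum free set, but this only gives $|T|\le D(G)-1 \le |G|-1$, which is too weak for a logarithmic bound. Instead I would bound $k(T)$ directly: enumerate $T = \{h_1,\dots,h_\ell\}$ and consider the chain of partial sums $\sigma(\{h_1,\dots,h_j\})$; more efficiently, mimic the argument in Proposition \ref{ksum}/\ref{factorproduct}. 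Actually the right tool is: write $k(T) = \sum_{h\in T} \frac{1}{\ord(h)}$ and use a multiplicativity/greedy argument showing $\prod_{h\in T}(1 + \tfrac{1}{\ord(h)}) \cdot(\text{something}) \le |G|$, or more robustly, observe that $T \sqcup \{-\sigma(T)\}$ is minimal zero-sum and invoke a known bound $k(S_0) \le \log|G|$ for minimal zero-sum $S_0$; this last inequality is itself proved by the subproduct argument $\prod (\text{orders along a suitable ordering}) \le |G|$ giving $\sum \log(\text{stuff}) \le \log|G|$, combined with $\frac{1}{\ord(g)} \le \log(\text{contribution to the product})$ since each new element with order $n$ at least doubles...

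\textbf{Main obstacle.} The delicate point is establishing $k(G) \le \log|G|$ (equivalently $K(G) \le 1 + \log|G|$ or just the zero-sum free bound $k(G)\le \log|G|$); this is where a genuine combinatorial idea is needed rather than bookkeeping. The standard argument: given zero-sum free $T$, order its elements $h_1,\dots,h_\ell$ and set $n_j = \ord\bigl(\sigma(\{h_1,\dots,h_j\})\bigr)$ in $G$; one shows the subgroups (or the partial-sum data) force $\prod$ of suitable quantities to be at most $|G|$, and that $\frac{1}{\ord(h_j)} \le \log$ of the $j$th factor because passing from the $(j-1)$st partial configuration to the $j$th multiplies the count of distinct reachable sums by a factor of at least $1 + \tfrac{1}{\ord(h_j)} \ge 2^{1/\ord(h_j)}$ is false in general — so the correct and clean version is: the number of distinct subset sums of $\{h_1,\dots,h_j\}$ is at least $\prod_{i=1}^{j}\bigl(\text{order of }h_i\text{ modulo the span of the earlier ones, capped appropriately}\bigr)$, and a careful accounting yields $\sum 1/\ord(h_i) \le \log_2|G|$ in the worst case, or $\log|G|$ with natural log — matching exactly the form in the statement. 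I would look this up in \cite{GaoWang} or \cite{Krause} rather than re-derive it, since the excerpt explicitly attributes the bound to Gao and Wang; the honest plan is therefore to cite their proof of $k(G)\le\log|G|$ and supply only the short reduction in the first paragraph above.
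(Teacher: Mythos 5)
The paper does not actually prove this proposition; it is imported verbatim from \cite{GaoWang}, so there is no internal proof to match. Your first paragraph is nonetheless a correct reduction and is exactly the technique the paper uses elsewhere (compare Proposition \ref{Girardbound} and Proposition \ref{asymptote}): choosing one $g_i$ from each irreducible factor, $\bigsqcup_{i=1}^m S(I_i)\setminus\{g_i\}$ is zero-sum free by unique factorization, so $k(S)\le k(G)+\sum_i 1/\ord(g_i)\le k(G)+\frac{\log_2|G|}{p}$ via Proposition \ref{ksum}. That part stands.

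The genuine gap is the remaining ingredient $k(G)\le\log|G|$ (natural log), and your sketch of it does not close. The subset-sum growth you gesture at, with multiplicative factor $1+\frac{1}{\ord(h)}$ per element, at best yields $\log|G|\ge\sum_h\log\bigl(1+\tfrac{1}{\ord(h)}\bigr)\ge(\log 2)\,k(T)$, i.e.\ only $k(G)\le\log_2|G|$, which is strictly weaker than $\log|G|$ and would give a weaker proposition; and the inequality $\log(1+\tfrac1n)\ge\tfrac1n$ that you would need is false. The correct statement of the growth lemma is that for zero-sum free $T$ the set $\Sigma_0(T)$ of subset sums satisfies $|G|\ge|\Sigma_0(T)|\ge\prod_{h\in T}\frac{\ord(h)}{\ord(h)-1}$, after which $-\log(1-x)\ge x$ gives $\log|G|\ge\sum_h\frac{1}{\ord(h)}=k(T)$; this is the standard bound $k(G)\le\log|G|$ found in \cite{GaoWang} and in Geroldinger--Halter-Koch \cite{GeroldingerHalter-Koch}. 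Since the paper itself only cites this proposition, your stated fallback of citing Gao--Wang for the missing lemma is acceptable in spirit, but as written your proposal neither proves nor correctly isolates that lemma: you should either state and cite $k(G)\le\log|G|$ cleanly, or prove the product inequality above, rather than presenting the several non-working variants.
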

This bound can be improved however by refining Gao and Wang's methods in \cite{GaoWang}.
\begin{proposition}[\cite{Girardcorrespondence}] \label{Girardbound} For any finite abelian group $G$, we have 
$$K_1(G) \le 2k(G).$$
\end{proposition}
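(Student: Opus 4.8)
The plan is to combine the (unique) irreducible factorization of a UFIM with the additivity of the cross number over disjoint unions; this is essentially the counting argument from the proof of Proposition~\ref{sizelimit}, but without the extra savings extracted there from Proposition~\ref{ksum}. Concretely, I would fix a UFIM $S$ over $G\setminus\{0\}$, let $S=\bigsqcup_{i=1}^m S(I_i)$ be its irreducible factorization, choose an arbitrary $g_i\in S(I_i)$ for each $1\le i\le m$, and set
$$A := \bigsqcup_{i=1}^m \left(S(I_i)\setminus\{g_i\}\right), \qquad B := \bigsqcup_{i=1}^m \{g_i\}.$$
Then $S = A\sqcup B$, so $k(S)=k(A)+k(B)$ directly from the definition of $k$.

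The key step is to show that both $A$ and $B$ are zero-sum free over $G\setminus\{0\}$ — these are exactly the two observations already invoked in the proof of Proposition~\ref{sizelimit}, and I would justify them using Remark~\ref{factor} together with uniqueness of the irreducible factorization. For $A$: any nonempty zero-sum submultiset $V\subseteq A\subseteq S$ is a union of irreducible factors of $S$ by Remark~\ref{factor}, say $V=\bigsqcup_{i\in K}S(I_i)$; since $V$ contains no $g_i$ while every $S(I_i)$ contains $g_i$, we must have $K=\emptyset$, a contradiction. For $B$: if $V=\bigsqcup_{i\in J}\{g_i\}$ were zero-sum with $J\neq\emptyset$, then $S\setminus V$ would be a zero-sum submultiset of $S$, hence a union of irreducible factors of $S$; comparing this with the identity $S\setminus V=\bigsqcup_{i\in J}(S(I_i)\setminus\{g_i\})\sqcup\bigsqcup_{i\notin J}S(I_i)$ forces $S(I_i)=\{g_i\}$ for every $i\in J$, which is impossible since a minimal zero-sum indexed multiset over $G\setminus\{0\}$ has at least two elements.

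Granting these two facts, the definition of the little cross number gives $k(A)\le k(G)$ and $k(B)\le k(G)$, hence $k(S)=k(A)+k(B)\le 2k(G)$; taking the maximum over all UFIMs $S$ over $G\setminus\{0\}$ yields $K_1(G)\le 2k(G)$. I do not expect a genuine obstacle here: all the content sits in the two zero-sum free claims, and those follow quickly from Remark~\ref{factor} and the uniqueness of the irreducible factorization, the rest being routine bookkeeping with $k(\cdot)$.
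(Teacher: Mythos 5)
Your proposal is correct and follows exactly the paper's argument: split $S$ along its irreducible factorization into $\bigsqcup_i (S(I_i)\setminus\{g_i\})$ and $\bigsqcup_i \{g_i\}$, note both are zero-sum free, and bound each piece by $k(G)$. The only difference is that you spell out the zero-sum freeness via Remark~\ref{factor} (where the direct argument for $B$ — a zero-sum $V\subseteq B$ would be a union of irreducible factors yet meets each factor in at most one of its $\ge 2$ elements — is even shorter than your detour through $S\setminus V$), whereas the paper simply asserts it "by unique factorization."
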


\begin{proof} For any given UFIM $S$, let $\bigsqcup_{i=1}^m I_i$ be its irreducible factorization. Then for each $1\le i\le m$, pick some $g_i \in S(I_i)$, and by unique factorization we have that $\bigsqcup_{i=1}^m S(I_i)\setminus\{g_i\}$ and $\bigsqcup_{i=1}^m \{g_i\}$ are zero-sum free. Hence, by the definition of $k(G)$,
$$k(S) = k\left(\bigsqcup_{i=1}^m S(I_i)\setminus\{g_i\}\right) + k\left(\bigsqcup_{i=1}^m \{g_i\}\right) \le 2k(G).$$
Since $S$ was an arbitrary UFIM, we have $K_1(G) \le 2k(G)$. 
\end{proof}

The following asymptotic result which more precisely captures the behavior of $K_1(G)$, in particular showing that it approaches the little cross number $k(G)$ in a certain limit. Recall the definitions of $\Omega_c$ and $\mathcal{S}_N$ as defined in Section \ref{structuralresults}.
\begin{proposition} \label{asymptote} For any $c,N \in \mathbb{R}_{\ge1}$, we have 
$$K_1(G) - k(G) \le N\frac{\log_2P^+(|G|)}{P^-(|G|)}$$
for all $G\in \mathcal{S}_N$ (note for $p$-groups, $P^+(|G|) = P^-(|G|)$). In particular, this implies that
$$\lim_{P^-(|G|)\rightarrow \infty,\; G\in \Omega_c\cap \mathcal{S}_N} |K_1(G) - k(G)| = 0.$$
\end{proposition}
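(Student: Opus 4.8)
The plan is to mimic the argument used in Proposition \ref{GaoWangbound} and in the proof of Proposition \ref{Girardbound}, but splitting off one representative per irreducible factor and bounding the ``leftover'' singletons more carefully using the membership $G\in\mathcal{S}_N$. Take any UFIM $S$ over $G\setminus\{0\}$ with irreducible factorization $\bigsqcup_{i=1}^m I_i$. For each $1\le i\le m$ choose some $g_i\in S(I_i)$; by unique factorization both $\bigsqcup_{i=1}^m S(I_i)\setminus\{g_i\}$ and $\bigsqcup_{i=1}^m\{g_i\}$ are zero-sum free, so $k\big(\bigsqcup_{i=1}^m S(I_i)\setminus\{g_i\}\big)\le k(G)$, which gives
$$k(S) \le k(G) + \sum_{i=1}^m k(\{g_i\}).$$
It remains to bound the sum of the singleton cross numbers. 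Here I would invoke Proposition \ref{ksum}: $\sum_{i=1}^m k(\{g_i\})\le \frac{m}{P^-(|G|)}$, together with $m\le \log_2|G|$. Since $G\in\mathcal{S}_N$, we have $|G| = \prod_i\prod_j p_i^{e_{ij}} \le P^+(|G|)^{\sum_{i,j}e_{ij}}\le P^+(|G|)^N$, hence $\log_2|G|\le N\log_2 P^+(|G|)$, so $\sum_{i=1}^m k(\{g_i\})\le N\frac{\log_2 P^+(|G|)}{P^-(|G|)}$. Taking the supremum over all UFIMs $S$ yields $K_1(G)-k(G)\le N\frac{\log_2 P^+(|G|)}{P^-(|G|)}$, which is the first claimed inequality; for $p$-groups $P^+(|G|)=P^-(|G|)=p$ so the bound reads $N\frac{\log_2 p}{p}$.

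For the limit statement, note first that $K_1(G)\ge k(G)$ always: given a zero-sum free $S$ with $k(S)=k(G)$, the multiset $S\sqcup\{-\sigma(S)\}$ is minimal zero-sum, hence a UFIM (a single irreducible factor is trivially uniquely factorizable), and dropping back one step — actually more directly, any minimal zero-sum indexed multiset is itself a UFIM, so $K_1(G)\ge K(G)\ge k(G)+\frac{1}{\Exp(G)}>k(G)$ by Proposition \ref{zero-sumfree}. In particular $|K_1(G)-k(G)| = K_1(G)-k(G)$ and the first inequality applies. Now for $G\in\Omega_c\cap\mathcal{S}_N$ we have $P^+(|G|)\le c\cdot P^-(|G|)$, so
$$0\le K_1(G)-k(G)\le N\frac{\log_2(c\,P^-(|G|))}{P^-(|G|)} = N\frac{\log_2 c + \log_2 P^-(|G|)}{P^-(|G|)},$$
which tends to $0$ as $P^-(|G|)\to\infty$ (with $c,N$ fixed). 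Hence $\lim_{P^-(|G|)\to\infty,\, G\in\Omega_c\cap\mathcal{S}_N}|K_1(G)-k(G)| = 0$, as desired.

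I do not expect a genuine obstacle here: every ingredient is already in place. The only point requiring a little care is the direction $K_1(G)\ge k(G)$ (so that the absolute value can be dropped), but this is immediate from the observation that any single minimal zero-sum multiset is a one-factor UFIM, combined with Proposition \ref{zero-sumfree}; alternatively one can cite $K_1(G)\ge K_1^*(G)\ge k^*(G)$ and appeal to $k(G)=k^*(G)$ in the cases at hand, though the direct argument is cleaner and fully general. The estimate $|G|\le P^+(|G|)^N$ for $G\in\mathcal{S}_N$ is the place where the hypothesis $G\in\mathcal{S}_N$ enters, and it is the crux of why the bound is uniform over $\mathcal{S}_N$ rather than depending on $|G|$ directly.
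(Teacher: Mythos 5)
Your proposal is correct and follows essentially the same route as the paper: split off one representative $g_i$ from each irreducible factor, bound the remainder by $k(G)$ via zero-sum freeness, and bound $\sum_i k(\{g_i\})$ by $\frac{\log_2|G|}{P^-(|G|)}\le N\frac{\log_2 P^+(|G|)}{P^-(|G|)}$ using Proposition \ref{factorproduct} and $G\in\mathcal{S}_N$. Your explicit justification that $K_1(G)\ge k(G)$ (via minimal zero-sum multisets being one-factor UFIMs and Proposition \ref{zero-sumfree}) is a small but welcome addition that the paper leaves implicit when dropping the absolute value.
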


\begin{proof} Write $G = \bigoplus_{i=1}^n \bigoplus_{j=1}^{t_i} C_{p_i^{e_{ij}}},\; p_1 < \cdots < p_n$. Let $S$ be any UFIM over $G\setminus\{0\}$ with irreducible factorization $\bigsqcup_{i=1}^m I_i$. Now for each $1\le i\le m$, choose any $g_i\in S(I_i)$, and by unique factorization $\bigsqcup_{i=1}^m S(I_i)\setminus\{g_i\}$ is zero-sum free, so $k(\bigsqcup_{i=1}^m S(I_i)\setminus\{g_i\}) \le k(G)$. Now
$$K(s) - k(G) \le \sum_{i=1}^m k(\{g_i\}) \le \frac{\log_2 |G|} {P^-(|G|)} = \frac{\sum_{i=1}^n\sum_{j=1}^{t_i} e_{ij}\log_2 p_i}{p_1}\le N \frac{\log_2 cp_1}{p_1}\rightarrow 0,$$
by Proposition \ref{factorproduct} and the assumption $G \in\Omega_c\cap\mathcal{S}_N$, as $p_1\rightarrow \infty$.
\end{proof}

For any $n\in \mathbb{N}_{\ge 1}$, let $\omega(n)$ denote the number of prime divisors of $n$ counted without multiplicity, and let
$$\mathcal{E}_{(l_1,\ldots,l_r)} := \left\{\bigoplus_{i=1}^r C_{n_i}, 1< n_1 | \cdots |n_r : \forall 1\le i\le r,\; \omega(n_i) = l_i, \; \gcd\left(n_i, \frac{n_r}{n_i}\right) = 1\right\}.$$

\begin{proposition}[\cite{Girard}] \label{Girardasymptote} For any $r,\; l_1, \ldots, l_r \in \mathbb{N}_{\ge 0}$, writing $G = \bigoplus_{i=1}^r C_{n_i}$ we have  
$$\lim_{P^-(n_r) \rightarrow \infty,\; G \in \mathcal{E}_{(l_1,\ldots,l_r)}} k\left(\bigoplus_{i=1}^r C_{n_i}\right) = \sum_{i=1}^r l_i = \lim_{P^-(n_r) \rightarrow \infty,\; G\in \mathcal{E}_{(l_1,\ldots,l_r)}} k^*\left(\bigoplus_{i=1}^r C_{n_i}\right).$$
\end{proposition}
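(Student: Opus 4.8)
The plan is to prove the two displayed limits separately; the right-hand one is elementary, and the left-hand one reduces to it once a matching upper bound for $k(G)$ is available. First I would unpack the structure forced by membership in $\mathcal{E}_{(l_1,\ldots,l_r)}$: if $G = \bigoplus_{i=1}^r C_{n_i}$ with $\gcd(n_i, n_r/n_i) = 1$ for all $i$, then for each prime $p \mid n_r$ the $p$-part of every $n_i$ is either $1$ or $p^{f_p}$ (where $p^{f_p} \,\|\, n_r$), so $G \cong \bigoplus_{p \mid n_r} C_{p^{f_p}}^{a_p}$ with $a_p := |\{\, i : p \mid n_i \,\}|$, and $\sum_{p \mid n_r} a_p = \sum_{i=1}^r \omega(n_i) = \sum_{i=1}^r l_i$. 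Hence $k^*(G) = \sum_{p \mid n_r} a_p (1 - p^{-f_p})$ differs from $\sum_i l_i$ by $\sum_{p\mid n_r} a_p p^{-f_p} \le (\sum_i l_i) / P^-(n_r) \to 0$, proving the right-hand limit. Since $k(G) \ge k^*(G)$ always (the Gao--Wang construction of Section~\ref{invariants}), this also gives $\liminf k(G) \ge \sum_i l_i$ along the same limit, so the whole statement comes down to $\limsup k(G) \le \sum_i l_i$.

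For the upper bound I would fix a zero-sum free indexed multiset $S$ over $G \setminus \{0\}$ with $k(S) = k(G)$ and use the Sylow decomposition $G = \bigoplus_{p \mid n_r} G_p$ with projections $\pi_p$. Partitioning $S$ by the support $Q(g) := \{\, p : \pi_p(g) \ne 0 \,\}$ gives $S = \bigsqcup_{\emptyset \ne Q} S_Q$, and each $S_Q$, being a submultiset of a zero-sum free multiset lying entirely in $G_Q := \bigoplus_{p \in Q} G_p$, is itself zero-sum free over $G_Q \setminus \{0\}$. The singleton parts then already contribute at most $\sum_p k(G_p) = \sum_p k^*(G_p) \le \sum_i l_i$, where $k(G_p) = k^*(G_p)$ because $G_p$ is a $p$-group (Theorem~\ref{Kbound}(1) with Remark~\ref{kremark}). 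So everything hinges on showing that the cross-term total $\sum_{|Q| \ge 2} k(S_Q)$ tends to $0$.

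Every element of such an $S_Q$ has order at least $\prod_{p \in Q} p \ge P^-(n_r)^2$, which gives the crude estimate $k(S_Q) \le |S_Q| / P^-(n_r)^2 \le (D(G_Q) - 1) / P^-(n_r)^2$; this suffices whenever $n_r$ is (close to) squarefree, but it degrades when $n_r$ has prime-power factors $p^{f_p}$ with $f_p$ large, since then $D(G_Q)$ is of the order of $\prod_{p\in Q} p^{f_p}$. The remedy is to refine the count: project $S_Q$ onto its smallest Sylow component $G_{p_0}$, split the image into minimal zero-sum blocks $W_1, \ldots, W_s$ and a zero-sum free remainder, and observe that the block sums $\sigma(T_j)$ of the preimages $T_j$ lie in, and form a zero-sum free multiset over, the complementary factor $G_{Q \setminus \{p_0\}}$ (a vanishing subsum of them would produce a zero-sum submultiset of $S$), so $s \le D(G_{Q\setminus\{p_0\}}) - 1$; combined with $k(W_j) \le K(G_{p_0})$ and an induction on the number of primes in $Q$, using Olson's theorem $D = D^*$ for finite abelian $p$-groups and the coprime additivity $D(H \oplus K) = D(H) + D(K) - 1$, one arrives at a bound $\sum_{|Q| \ge 2} k(S_Q) \le C(l_1, \ldots, l_r)/P^-(n_r) \to 0$. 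This cross-term estimate --- the need to sort the elements of $S_Q$ by their \emph{exact} orders and argue recursively, rather than through a single Davenport-constant bound --- is the main obstacle, and it is the technical content of \cite{Girard}; feeding it back into the decomposition above yields $\limsup k(G) \le \sum_i l_i$, and together with the lower bound and the right-hand limit this completes the proof.
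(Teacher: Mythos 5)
The paper offers no proof of this proposition: it is imported verbatim from \cite{Girard} and used as a black box, so there is no in-paper argument to compare yours against. On its own terms, your proposal correctly handles everything except the one genuinely hard step. The structural analysis of $\mathcal{E}_{(l_1,\ldots,l_r)}$ (each $n_i$ carries either the full $p$-part $p^{f_p}$ of $n_r$ or none of it), the computation $k^*(G)=\sum_{p}a_p(1-p^{-f_p})\rightarrow\sum_i l_i$, and the reduction via $k(G)\ge k^*(G)$ to the single inequality $\limsup k(G)\le\sum_i l_i$ are all correct, and are exactly how one would set the problem up.

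The gap is in the cross-term estimate, and it is not just omitted routine detail: the recursive scheme you describe does not close as stated. Take $|Q|=2$, say $Q=\{p_0,q\}$ with $p_0<q$ and $G_q=C_{q^{f_q}}^{a_q}$. Your count gives $s\le D(G_q)-1=a_q(q^{f_q}-1)$ blocks, each contributing at most $K(G_{p_0})/q$ to the cross number (the factor $1/q$ coming from the nonzero $q$-component of every element of $T_j$), so the resulting bound is of order $a_q\,q^{f_q-1}K(G_{p_0})$, which is unbounded once $f_q\ge 2$; projecting onto the other Sylow component instead gives $a_{p_0}\,p_0^{f_{p_0}-1}K(G_q)$, which fails symmetrically, and even in the mixed case where one exponent is $1$ the bound stays of order $a_{p_0}a_q$ rather than tending to $0$. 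The induction on $|Q|$ only compounds this. The actual argument in \cite{Girard} must track the exact orders of the elements of $S_Q$ and replace the Davenport-constant count of blocks by a cross-number count of the block sums; making those two bookkeeping devices interact is precisely the content of that paper. You are candid that you are deferring this to the citation --- which is also what the paper under review does --- but as a self-contained proof your proposal is incomplete, and the particular sketch you offer for the missing step would fail for groups whose exponent is divisible by a high prime power.
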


\begin{lemma} \label{k*K_1*} We have
$$\lim_{P^-(|G|)\rightarrow \infty} |k^*(G) - K_1^*(G)| = 0.$$
\end{lemma}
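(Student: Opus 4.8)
The plan is to bound the difference $K_1^*(G) - k^*(G)$ directly using the formulas given in the excerpt and show it goes to $0$ as $P^-(|G|) \to \infty$. Recall from Section \ref{introduction} and Section \ref{invariants} that, writing $G = \bigoplus_{i=1}^n \bigoplus_{j=1}^{n_i} C_{p_i^{e_{ij}}}$ as a direct sum of prime-power cyclic groups, we have
$$K_1^*(G) - k^*(G) = \sum_{i=1}^n \sum_{j=1}^{n_i} \frac{1}{p_i} K_1^*\!\left(C_{p_i^{e_{ij}}}\right),$$
an identity already used in the proofs of Proposition \ref{sizelimit} and Proposition \ref{pbound}. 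Since $K_1^*(G) \ge k^*(G) \ge 0$ the absolute value is unnecessary, and it suffices to bound the right-hand side.

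First I would note that $K_1^*(C_{p^e}) = 1 + \frac{1}{p} + \cdots + \frac{1}{p^{e-1}} < \frac{p}{p-1} \le 2$ for any prime $p$ and any $e \ge 1$, so each summand $\frac{1}{p_i} K_1^*(C_{p_i^{e_{ij}}})$ is at most $\frac{2}{p_i} \le \frac{2}{P^-(|G|)}$. Hence
$$K_1^*(G) - k^*(G) \le \frac{2}{P^-(|G|)} \sum_{i=1}^n \sum_{j=1}^{n_i} 1 = \frac{2}{P^-(|G|)} \sum_{i=1}^n n_i.$$
The remaining issue is that $\sum_{i=1}^n n_i$ is not bounded a priori, but it is controlled: each of the $n_i$ distinct summands contributing prime $p_i$ contributes a factor $p_i \ge P^-(|G|) \ge 2$ to $|G|$, so $\sum_{i=1}^n n_i \le \log_2 |G|$. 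Thus $K_1^*(G) - k^*(G) \le \frac{2 \log_2 |G|}{P^-(|G|)}$. However, $\log_2 |G|$ is itself unbounded even as $P^-(|G|) \to \infty$ (one can take many factors), so a bare limit over all finite abelian groups is false; the statement must be read as a limit over a family in which the number of prime factors counted with multiplicity stays bounded, exactly as in Proposition \ref{asymptote} and Lemma \ref{k*K_1*}'s intended use.

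The honest writeup, then, mirrors Proposition \ref{asymptote}: restrict to $G \in \mathcal{S}_N$ so that $\sum_{i=1}^n \sum_{j=1}^{n_i} e_{ij} \le N$, hence $\sum_{i=1}^n n_i \le \sum_{i=1}^n \sum_{j=1}^{n_i} e_{ij} \le N$, giving
$$K_1^*(G) - k^*(G) \le \frac{2N}{P^-(|G|)} \xrightarrow{\,P^-(|G|) \to \infty\,} 0.$$
The main obstacle is purely one of precise formulation — recognizing that the limit is vacuous without a bound on the number of prime factors and supplying the $\mathcal{S}_N$ hypothesis (as the surrounding results do) — after which the estimate is a one-line consequence of the closed-form expressions for $K_1^*$ and $k^*$. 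If the intended reading is instead over $p$-groups or over $\mathcal{E}_{(l_1,\ldots,l_r)}$ (where $\sum l_i$ and $r$ are fixed), the same bound $K_1^*(G) - k^*(G) \le \frac{2 \sum_i n_i}{P^-(|G|)}$ applies with $\sum_i n_i \le \sum_i l_i \cdot (\text{something bounded})$, and the conclusion follows identically.
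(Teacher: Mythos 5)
Your proof is correct and takes essentially the same route as the paper's: both estimate $K_1^*(G)-k^*(G)$ termwise from the closed-form expressions (each summand being $\frac{1}{p_i}K_1^*(C_{p_i^{e_{ij}}}) \le \frac{2}{P^-(|G|)}$) and let $P^-(|G|)\to\infty$. Your caveat is well taken and is in fact a point the paper's own proof elides: the number of summands $\sum_i n_i$ is not controlled by $P^-(|G|)$ alone, so the unrestricted limit is problematic (e.g.\ $G=C_p^{p}$ gives a difference of about $1$), and the lemma is only ever invoked inside $\Omega_c\cap\mathcal{S}_N\cap\mathcal{E}_{(l_1,\ldots,l_r)}$ in Corollary \ref{K1behavior}, where your explicit bound $2N/P^-(|G|)$ makes the argument airtight.
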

\begin{proof} Write $G = \bigoplus_{i=1}^n\bigoplus_{j=1}^{n_i} C_{p_i^{e_{ij}}},\; p_1 < \cdots < p_n$. As $p_1\rightarrow 0$,
$$K_1^*(G) - k^*(G) = \sum_{i=1}^n\sum_{j=1}^{n_i} \frac{p_i^{e_{ij}}-1}{p_i^{e_{ij}} - p_i^{e_{ij}-1}}-\frac{p_i^{e_{ij}}-1}{p_i^{e_{ij}}} \le \sum_{i=1}^n\sum_{j=1}^{n_i} \frac{p_1^{e_{ij}}-1}{p_1^{e_{ij}} - p_1^{e_{ij}-1}}-\frac{p_1^{e_{ij}}-1}{p_1^{e_{ij}}}\rightarrow 0.$$
\end{proof}

\begin{corollary} \label{K1behavior} For any fixed $c\in\mathbb{R}_{\ge 1}$ and $N,\; r,\; l_1, \ldots, l_r \in\mathbb{N}$, we have
$$\lim_{P^-(|G|) \rightarrow \infty, \; G\in \Omega_c\cap \mathcal{S}_N \cap \mathcal{E}_{(l_1,\ldots,l_r)}} |K_1(G) - K_1^*(G)| = 0.$$
\end{corollary}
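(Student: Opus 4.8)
The plan is to combine the three preceding asymptotic results —
Proposition~\ref{asymptote}, Proposition~\ref{Girardasymptote}, and
Lemma~\ref{k*K_1*} — via the triangle inequality. Concretely, for any
finite abelian group $G$ write
$$|K_1(G) - K_1^*(G)| \le |K_1(G) - k(G)| + |k(G) - k^*(G)| + |k^*(G) - K_1^*(G)|,$$
and bound each of the three terms separately, taking the limit
$P^-(|G|)\to\infty$ over $G\in \Omega_c\cap\mathcal{S}_N\cap\mathcal{E}_{(l_1,\ldots,l_r)}$.

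\textbf{Step 1.} For the first term, I would invoke
Proposition~\ref{asymptote}: since $G\in\mathcal{S}_N$, we have
$K_1(G) - k(G) \le N\frac{\log_2 P^+(|G|)}{P^-(|G|)}$, and because $G\in\Omega_c$
this is at most $N\frac{\log_2(c\,P^-(|G|))}{P^-(|G|)}\to 0$ as $P^-(|G|)\to\infty$.
(Also $K_1(G)\ge k(G)$ since any zero-sum free multiset of maximal cross number,
appended with $-\sigma$ of itself, can be split off as part of a UFIM — or
simply because the bound in Proposition~\ref{asymptote} is stated as a
nonnegative difference — so the absolute value is harmless.) Hence the first
term tends to $0$.

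\textbf{Step 2.} For the second term, I would use the chain
$$k^*(G) \le k(G) \le K_1(G),\qquad k(G)-k^*(G) \le (K_1(G)-k^*(G)),$$
and note $K_1(G) - k^*(G) \le |K_1(G)-k(G)| + |k(G)-k^*(G)|$ is circular, so
instead I bound $k(G)-k^*(G)$ directly: by Proposition~\ref{Girardasymptote},
$k(G)\to \sum_{i=1}^r l_i$ along $G\in\mathcal{E}_{(l_1,\ldots,l_r)}$ as $P^-(n_r)\to\infty$,
and the same proposition gives $k^*(G)\to\sum_{i=1}^r l_i$ as well; since
$P^-(|G|)=P^-(n_r)$ here, both converge to the same limit, so
$|k(G)-k^*(G)|\to 0$. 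For the third term, Lemma~\ref{k*K_1*} gives
$|k^*(G)-K_1^*(G)|\to 0$ directly (this even holds with no restriction on the
class of groups). Combining, all three terms vanish in the limit, proving the
corollary.

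\textbf{Main obstacle.} There is no serious analytic obstacle — each piece is
already available as a cited or proven statement — so the only care needed is
bookkeeping: making sure the limit is taken over the \emph{same} directed
family for all three inequalities (in particular that $P^-(|G|)=P^-(n_r)$ when
$G=\bigoplus C_{n_i}$ is written in invariant-factor form, which is why the
intersection with $\mathcal{E}_{(l_1,\ldots,l_r)}$ appears), and that the
hypotheses $G\in\Omega_c$ and $G\in\mathcal{S}_N$ are exactly what is needed to
make the bound in Step~1 uniform. I would state the proof in essentially three
lines: apply the triangle inequality as above, then cite
Proposition~\ref{asymptote} (with $\Omega_c\cap\mathcal{S}_N$),
Proposition~\ref{Girardasymptote} (with $\mathcal{E}_{(l_1,\ldots,l_r)}$), and
Lemma~\ref{k*K_1*} in turn, and conclude that the left-hand side is squeezed to
$0$.
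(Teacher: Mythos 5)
Your proposal is correct and matches the paper's proof, which simply cites Proposition~\ref{asymptote}, Proposition~\ref{Girardasymptote}, and Lemma~\ref{k*K_1*}; the triangle-inequality decomposition you spell out is exactly the implicit content of that one-line citation. The bookkeeping points you raise (same directed family, $P^-(|G|)=P^-(n_r)$) are the right things to check and present no difficulty.
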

\begin{proof}This follows directly from Proposition \ref{asymptote}, Proposition \ref{Girardasymptote} and Lemma \ref{k*K_1*}. 
\end{proof}
\section{Conclusion}
A full resolution of Conjecture \ref{K1conjecture} still seems far away, though it is hopeful that it could be verified for larger classes of abelian groups. General $p$-groups seems to be the most amenable ``next step," as several of the results in Section \ref{structuralresults} seem to suggest. Of course, a resolution of the conjecture for general $p$-groups would be, by Remark \ref{N1K1}, at least as strong verifying Conjecture \ref{N1conjecture} for groups of the form $C_p^n$, and this has only recently been verified for $n=2$ by Gao, Li, and Peng (see \cite{GaoLiPeng}). Other directions of pursuit are to extend the asymptotic results of Section \ref{secbound}, and to study the structure of UFIMs which achieve maximal cross number. The results of Remark \ref{maximalstructurepq} and Theorem \ref{mainthm2} perhaps suggest the following conjecture. 

\begin{conjecture} Let $G$ be a finite abelian group such that $G = \bigoplus_{i=1}^n G_{p_i}$ where $p_1,\ldots,p_n$ are distinct primes and $G_{p_i}$ is the Sylow $p_i$-group of $G$ for each $1 \le i\le n$. If $S$ is a UFIM over $G\setminus\{0\}$ with $k(S) = K_1(G)$, then $S$ has a decomposition 
$$S = \bigsqcup_{i=1}^n S_{p_i}$$
where $S_{p_i}$ is a UFIM over $G_{p_i}\setminus\{0\}$ for each $1\le i\le n$. 
\end{conjecture}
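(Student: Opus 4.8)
The plan is to induct on the number $n$ of distinct primes dividing $|G|$, the case $n=1$ being vacuous. For $n\ge 2$ fix one prime $q\mid |G|$, write $G = G_q\oplus H$ where $H$ is the product of the remaining Sylow subgroups, and apply Construction \ref{construction} to the given UFIM $S$ and the projection $\phi\colon G\to H$ with $\ker(\phi)=G_q$, obtaining $S = T\sqcup S''\sqcup\bigsqcup_{i=1}^t S_i$. Two things must be established: that $t=0$, and that then $S$ genuinely splits over $G_q$ and $H$. For the second point, when $t=0$ Proposition \ref{push} gives that $T$ is a UFIM over $G_q\setminus\{0\}$ and $\phi(S'')$ is a UFIM over $H\setminus\{0\}$, and the chain $k(S) = k(T)+k(S'')\le k(T)+k(\phi(S''))\le K_1(G_q)+K_1(H)$ becomes an equality under the hypothesis $k(S)=K_1(G)$, \emph{provided} one knows the additivity $K_1(G)=K_1(G_q)+K_1(H)$. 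Equality in $k(S'')\le k(\phi(S''))$ forces $\ord(x)=\ord(\phi(x))$ for every $x\in S''$, i.e.\ every element of $S''$ has trivial $G_q$-component, so $S = T\sqcup S''$ with $T$ supported in $G_q$ and $S''$ supported in $H$; equality also gives $k(T)=K_1(G_q)$ and $k(S'')=K_1(H)$, and the inductive hypothesis applied to the maximal UFIM $S''$ over $H\setminus\{0\}$ (which has one fewer distinct prime divisor) completes the induction.

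It remains to prove $t=0$ under $k(S)=K_1(G)$. Running the same bookkeeping with cross terms present, Proposition \ref{push}(\ref{phiunique1}) gives $k(T)+\sum_{i=1}^t\frac{1}{\ord(\sigma(S_i))}\le K_1(G_q)$, so that together with $k(S'')\le k(\phi(S''))\le K_1(H)$ and $k(S)=k(T)+k(S'')+\sum_i k(S_i)$ one obtains
\[ k(S)\;\le\;K_1(G_q)+K_1(H)+\sum_{i=1}^t\Bigl(k(S_i)-\tfrac{1}{\ord(\sigma(S_i))}\Bigr). \]
Granting additivity and $k(S)=K_1(G)$, the last sum is $\ge 0$; on the other hand Proposition \ref{lift} shows $(S\setminus S_i)\sqcup\{\sigma(S_i)\}$ is a UFIM of cross number $k(S)-k(S_i)+\frac{1}{\ord(\sigma(S_i))}$, so maximality of $k(S)$ forces $k(S_i)\ge\frac{1}{\ord(\sigma(S_i))}$ for every $i$. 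To upgrade these two facts to $t=0$, the idea is to show that the presence of a cross term makes at least one inequality above strict — for instance $|T|+t\le N_1(G_q)$ fails to be saturated, or $k(S'')<k(\phi(S''))$ because $S''$ then contains a mixed element — forcing $k(S)<K_1(G)$, a contradiction. This is exactly the mechanism behind the cases already settled, Remark \ref{maximalstructurepq} for $C_{pq}$ and Theorem \ref{mainthm2} for $C_r\oplus G$ with $r\in\{2,3\}$, where $N_1(\ker(\phi))$ is so small that the step is immediate.

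The genuine obstacles are twofold. First, the additivity $K_1(G)=\sum_i K_1(G_{p_i})$ — needed for $G$ and for every product of a sub-collection of its Sylow subgroups — is not known in general; it is the additivity half of Conjecture \ref{K1conjecture}, and indeed the present conjecture \emph{implies} it (were $K_1(G)>\sum_i K_1(G_{p_i})$, no maximal UFIM could split, whereas the reverse inequality always holds by concatenating maximal UFIMs over the individual Sylow subgroups). So realistically the statement should be proved conditionally on Conjecture \ref{K1conjecture}. Second, even granting additivity, excluding ``balanced'' cross terms — an $S_i$ with $k(S_i)=\frac{1}{\ord(\sigma(S_i))}$ and no slack elsewhere — seems to require a genuinely finer analysis of how an irreducible factor meeting such an $S_i$ interacts with the rest of the factorization, via the refinement criterion of Proposition \ref{equivchar}; such configurations do occur in isolation, and the known cases sidestep them only because $N_1(\ker(\phi))$ is tiny, so that $|T|+t\le N_1(\ker(\phi))$ already forbids cross terms outright. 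Handling larger Sylow subgroups is where the argument stalls.
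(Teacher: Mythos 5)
You should first note that the statement you are addressing is stated in the paper only as a conjecture (the final conjecture of the Conclusion); the paper offers no proof of it, so there is nothing to compare your argument against. Your write-up is accordingly best read as a proof strategy, and to your credit you assess its status honestly. The skeleton is sound: the reduction via Construction \ref{construction} applied to the projection $\phi\colon G\to H$ with $\ker(\phi)=G_{q}$, the observation that in the $t=0$ case equality in $k(S'')\le k(\phi(S''))$ forces every element of $S''$ to have trivial $G_q$-component (since for $x=(a,b)\in G_q\oplus H$ one has $\ord(x)=\ord(a)\ord(b)$, so $\ord(x)=\ord(\phi(x))$ iff $a=0$), and the induction on the number of prime divisors are all correct as far as they go. Your side remark that the conjecture implies the additivity $K_1(G)=\sum_i K_1(G_{p_i})$ is also correct, and it explains why any unconditional proof must be at least as hard as that half of Conjecture \ref{K1conjecture}.

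The two gaps you name are genuine and are precisely why the statement remains conjectural. First, without additivity of $K_1$ over the Sylow decomposition, the equality case of your chain of inequalities gives nothing, so at best the argument is conditional on Conjecture \ref{K1conjecture}. Second, even conditionally, your mechanism for forcing $t=0$ only yields the two soft inequalities $k(S_i)\ge \frac{1}{\ord(\sigma(S_i))}$ and $\sum_{i=1}^{t}\bigl(k(S_i)-\frac{1}{\ord(\sigma(S_i))}\bigr)\ge 0$, which are perfectly consistent with $t\ge 1$. The cases settled in the paper (Remark \ref{maximalstructurepq} and Theorem \ref{mainthm2}) succeed only because $N_1(\ker(\phi))$ equals $2$ or $3$ there, so the constraint $|T|+t\le N_1(\ker(\phi))$ leaves almost no room for cross terms, and even then Theorem \ref{mainthm2} needs a quantitative hypothesis on $p_1$ rather than a purely structural argument. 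For a general Sylow subgroup $G_q$ no such numerical squeeze is available, and ruling out a balanced cross term would require a new structural idea, presumably via the refinement criterion of Proposition \ref{equivchar}. In short: a correct map of the terrain and of where the known techniques stop, but not a proof, and the paper itself supplies none.
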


\noindent\textbf{Acknowledgments.} This research was conducted while I was a participant at the University of Minnesota Duluth REU program, supported by NSF/DMS grant 1062709 and NSA grant H98230-11-1-0224. I would like to thank Joe Gallian for his encouragement, advice and enthusiasm in running the program. I would also like to thank the program advisors Adam Hesterberg, David Rolnick and Eric Riedl for their valuable suggestions to preliminary versions of this paper. Finally, I thank the program visitors Yasha Berchenko-Kogan, Nathan Kaplan, Brian Lawrence, Krishanu Sankar, and Jonathan Wang for helpful discussions.

\end{document}